\documentclass[a4paper,10pt]{amsart}


\usepackage{amsthm}                   
\usepackage{amsmath}                  
\usepackage{amsfonts}                 
\usepackage{amssymb}                  
\usepackage{mathrsfs}                 
\usepackage{xy}
\usepackage{graphicx,subfigure}
\xyoption{all}

\usepackage{epstopdf}

\theoremstyle{definition}
\newtheorem{definition}{Definition}[section]
\newtheorem{remark}[definition]{Remark}

\newtheorem{characterization}[definition]{Characterization}
\theoremstyle{plain}
\newtheorem{theorem}[definition]{Theorem}
\newtheorem{lemma}[definition]{Lemma}

\newtheorem{proposition}[definition]{Proposition}
\newtheorem*{theorem*}{Theorem}

\numberwithin{equation}{section}


\newcommand{\eps}{\varepsilon}

\begin{document}
\author{Beno\^{i}t Loridant}\author{Shu-qin Zhang}
\address{Chair of Mathematics and Statistics, University of Leoben, Franz-Josef-Strasse 18, A-8700 Leoben, Austria}
\email{benoit.loridant@unileoben.ac.at}
\email{zhangsq\_ccnu@sina.com}
\title[Topology of a class of $p2$-crystallographic tiles]{Topology of a class of $p2$-crystallographic replication tiles}

\subjclass[2010]{Primary: 28A80. Secondary: 52C20, 20H15}
\keywords{Self-affine sets, Tilings, Crystallographic $p2$-tiles, Neighbor sets}
\date{\today}
\thanks{Supported by project I1136 and by the doctoral program W1230 granted by the Austrian Science Fund (FWF)}

\maketitle

\begin{abstract}
We study the topological properties of a class of planar crystallographic replication tiles. Let $M\in\mathbb{Z}^{2\times2}$ be an expanding matrix with characteristic polynomial $x^2+Ax+B$ ($A,B\in\mathbb{Z}$, $B\geq 2$) and ${\bf v}\in\mathbb{Z}^2$ such that $({\bf v},M{\bf v})$ are linearly independent. Then the equation
$$MT+\frac{B-1}{2}{\bf v} =T\cup(T+{\bf v})\cup (T+2{\bf v})\cup
\cdots\cup(T+(B-2){\bf v})\cup(-T)
$$
defines a unique nonempty compact set $T$ satisfying $\overline{T^o}=T$. Moreover, $T$ tiles the plane by the crystallographic group $p2$ generated by the $\pi$-rotation and the translations by integer vectors. It was proved by Leung and Lau in the context of self-affine lattice tiles with collinear digit set that $T\cup (-T)$ is homeomorphic to a closed disk if and only if $2|A|<B+3$. However, this characterization does not hold anymore for $T$ itself. In this paper, we completely characterize the tiles $T$ of this class that are homeomorphic to a closed disk.
\end{abstract}

\section{Introduction}

A \emph{crystallographic replication tile}  with respect to a
crystallographic group $\Gamma\subset\textrm{Isom}(\mathbb{R}^n)$ is a nonempty compact set $T\subset{\mathbb R}^n$ that is the closure of its interior ($\overline{T^o}=T$) and satisfies the following properties.
\begin{itemize}
\item[$(i)$] There is an expanding affine mapping $g:{\mathbb
R}^n\rightarrow{\mathbb R}^n$ such that $g\circ\Gamma\circ
g^{-1}\subset\Gamma$, and a finite collection $\mathcal{D}\subset\Gamma$ called \emph{digit set} such that
$$
g(T)=\bigcup_{\delta\in\mathcal{D}}\delta(T).
$$
\item[$(ii)$] The family $\{\gamma(T); \gamma\in\Gamma\}$ is a \emph{tiling} of ${\mathbb R}^n$. In other words, $\mathbb{R}^n=\bigcup_{\gamma\in\Gamma}\gamma(T)$ and $\gamma(T^o)\cap\gamma'(T^o)=\emptyset$ for distinct elements $\gamma,\gamma'\in\Gamma$. 
\end{itemize}

There is a vast literature dealing with the lattice case, \emph{i.e.},  when $\Gamma$ is isomorphic to $\mathbb{Z}^n$: criteria exist to check basic properties, such as the tiling property~\cite{LagariasWang96a}, connectedness~\cite{KiratLau00} or, in the planar case ($n=2$),  homeomorphy to a closed disk (\emph{disk-likeness}). For instance, Bandt and Wang recognize disk-like self-affine lattice tiles by the number and location of the neighbors in the tiling~\cite{BandtWang01}, and Lau and Leung characterize all the disk-like tiles among the class of self-affine lattice tiles with collinear digit set~\cite{LauLeung07}. A powerful tool in the study of topological properties is the \emph{neighbor graph}: it gives a precise description of the boundary of the tile in terms of a \emph{graph directed iterated function system} (\emph{GIFS}). Akiyama and the first author elaborated a boundary parametrization method by making extensive use of the neighbor graph~\cite{AkiyamaLoridant11}. Algorithms allow to determine the neighbor graph for any given tile $T$~\cite{ScheicherThuswaldner02}, while it is usually difficult to deal with infinite classes of tiles. However, Akiyama and Thuswaldner computed the neighbor graph for an infinite class of planar self-affine lattice tiles associated with canonical number systems and used it to characterize  the disk-like tiles among this class~\cite{AkiyamaThuswaldner05}. Methods relying on the neighbor graph were extended to crystallographic replication tiles in~\cite{LoridantLuo09,LoridantLuoThuswaldner07}. 

If $T$ is a crystallographic replication tile, the associated digit set $\mathcal{D}$ must be a complete set  of right coset representatives of the subgroup $g\circ\Gamma\circ g^{-1}$. On the other side, if $T$ is a nonempty compact set $T\subset{\mathbb R}^n$ satisfying $(i)$ and $\mathcal{D}$ is a complete set  of right coset representatives of the subgroup $g\circ\Gamma\circ g^{-1}$, Gelbrich proves that there is a subset $\Gamma'\subset\Gamma$ called \emph{tiling set} such that the family $\{\gamma(T); \gamma\in\Gamma'\}$ is a tiling of ${\mathbb R}^n$.  Under these conditions, it is not known in general whether the tiling set $\Gamma'$ is a \emph{subgroup} of the crystallographic group $\Gamma$, contrary to the lattice case (see~\cite{LagariasWang96b}). However, the first author defined  in~\cite{Loridant12} the \emph{crystallographic number systems}, in analogy to the canonical number systems from the lattice case (see \emph{e.g.}~\cite{Katai95}). This gives  a way to produce classes of crystallographic replication tiles whose tiling set is the whole group $\Gamma$. An infinite class of examples given in~\cite{Loridant12} reads as follows. Let $p2$  be the planar crystallographic group generated by the translations $a(x,y)=(x+1,y)$, $b(x,y)=(x,y+1)$ and the $\pi$-rotation $c(x,y)=(-x,-y)$. Moreover, for $A,B\in\mathbb{Z}$ satisfying $|A|\leq B\geq2$, let $g$ be the expanding mapping  defined on $\mathbb{R}^2$ by
$$g(x,y)=\left(\begin{matrix}
0 &-B\\
1&-A\\
\end{matrix}
\right)
\left(\begin{matrix}
x\\
y\\
\end{matrix}
\right)
+
\left(\begin{matrix}
\frac{B-1}{2}\\
0\\
\end{matrix}
\right).
$$
Then the equation
$$g(T)=T\cup\left(T+\left(\begin{array}{c}1\\0\end{array}\right)\right)\cup
\cdots\cup\left(T+\left(\begin{array}{c}B-2\\0\end{array}\right)\right)\cup(-T)
$$
defines a crystallographic replication tile whose tiling set is the whole group $p2$. This tiling property follows from the crystallographic number system property only for $A\geq -1$, as stated in~\cite{Loridant12}, but we will deduce it for all values of $A$. Moreover, we will obtain topological information on $T$ by comparing it with the self-affine lattice tile $T^l$ defined by
$$\left(\begin{matrix}
0 &-B\\
1&-A\\
\end{matrix}
\right) T^{\ell}= T^{\ell}\cup\left(T^{\ell}+\left(\begin{array}{c}1\\0\end{array}\right)\right)\cup
\cdots\cup\left(T^{\ell}+\left(\begin{array}{c}B-1\\0\end{array}\right)\right).
$$
In fact, for fixed $A
\text{ and }B$, the tile $T^l$ is a translation of $T\cup(-T)$, as shown in~\cite{Loridant12}. It follows from Leung and Lau's result~\cite{LauLeung07}  on self-affine tiles with collinear digit set that $T^l$ is disk-like if and only if $2|A|-B< 3$. However, it was noticed in~\cite{Loridant12} that it can happen that $T^l$ is disk-like while $T$ is not disk-like (see the examples of Figure~  \ref{A2B3_subfigures} and Figure~ \ref{A3B4_subfigures}). The current paper will establish exactly for which parameters $A,B$ this phenomenon occurs. For $2|A|-B<3$, the associated lattice tile $T^l$ is disk-like and a result of Akiyama and Thuswaldner~\cite{AkiyamaThuswaldner05} on canonical number system tiles will allow us to estimate the set of neighbors of $T$. Finding out the disk-like tiles for parameters satisfying $2|A|-B<3$ will then rely on the construction of the associated neighbor graphs for the whole class. For $2|A|-B\geq3$, a purely topological argument will enable us to prove that the associated tiles are not disk-like.

 Our results easily generalize to a broader class of crystallographic replication tiles, closely related to the class of self-affine tiles with collinear digit set as studied by Leung and Lau in~\cite{LauLeung07}. Therefore, we are able to show the following classification theorem. 

\begin{theorem*}
Let $A,B\in\mathbb{Z}$ satisfying $|A|\leq B\text{ and }B\geq2$, 
$M\in\mathbb{Z}^{2\times2}$ a matrix with characteristic polynomial $x^2+Ax+B$ and let ${\bf v}\in\mathbb{Z}^2$ such that $({\bf v},M{\bf v})$ are linearly independent. Let $T$ be the crystallographic replication tile defined by 
$$MT+\frac{B-1}{2}{\bf v}=T\cup(T+{\bf v})\cup (T+2{\bf v})\cup
\cdots\cup(T+(B-2){\bf v})\cup(-T).
$$
Then the following statements hold.
\begin{itemize}
\item Suppose that $2|A|-B\geq 3$. Then $T$ is not disk-like.
\item Suppose that $2|A|-B<3$. Then one of the following cases occurs:
\begin{enumerate}
\item If $A\in\{-2,~-1,~0,~1\}, B\geq 2$ or $A=2, ~B=2$, then $T$ is disk-like.
\item If $A\geq 2, ~B\geq3$ or $A\leq -3, ~B\geq 4$, then $T$ is not disk-like. 

\end{enumerate}
\end{itemize}
\end{theorem*}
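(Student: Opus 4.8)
The plan is to organize the proof around the identity $T^{\ell}=T\cup(-T)$ (up to translation) established in \cite{Loridant12}, so that the disk-likeness of the collinear lattice tile $T^{\ell}$, settled by Leung and Lau in \cite{LauLeung07} (disk-like iff $2|A|-B<3$), can be played off against that of the crystallographic tile $T$. I would treat the two regimes $2|A|-B\ge 3$ and $2|A|-B<3$ separately, since only in the second does one have a disk-like lattice tile to lean on.

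For the regime $2|A|-B\ge 3$, where $T^{\ell}$ fails to be disk-like, I would transfer this failure to $T$ by a purely topological argument based on $T^{\ell}=T\cup(-T)$. Assuming $T$ were disk-like, so that $T$ and $-T=c(T)$ are closed disks, the union $T\cup(-T)=T^{\ell}$ could fail to be a disk only if the contact set $T\cap(-T)$ failed to be a single arc: two disks glued along a connected arc form a disk, and a Mayer--Vietoris/Euler-characteristic count gives $\chi(T\cap(-T))=\chi(T)+\chi(-T)-\chi(T^{\ell})=2-\chi(T^{\ell})$, which exceeds $1$ as soon as $T^{\ell}$ is not simply connected. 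I would therefore analyze $T\cap(-T)$ directly from the set equation --- it is governed by a sub-GIFS of the subdivision, the neighbor $-T$ being the unique rotated piece adjacent to the collinear chain --- and either force it to be connected (an immediate contradiction) or locate inside it a bounded complementary component of $T$ itself. Transferring the non-simple-connectivity of the union $T\cup(-T)$ to the single tile $T$ is the crux, and I expect it to be the most delicate point of this half.

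For the regime $2|A|-B<3$, $T^{\ell}$ is disk-like and I would proceed through the neighbor graph. The steps are: (i) use the description of the neighbors of canonical-number-system tiles of Akiyama and Thuswaldner \cite{AkiyamaThuswaldner05}, together with $T^{\ell}=T\cup(-T)$, to pin down the finite neighbor set $\{\gamma\in p2:\gamma(T)\cap T\neq\emptyset\}$ of $T$; (ii) build the neighbor graph of $T$ uniformly in $(A,B)$, i.e.\ the GIFS whose attractors are the contact sets $T\cap\gamma(T)$; and (iii) apply the disk-likeness criterion for crystallographic replication tiles developed in \cite{LoridantLuoThuswaldner07,LoridantLuo09}, a crystallographic analogue of Bandt--Wang \cite{BandtWang01}: $T$ is disk-like iff it is connected, each contact set is a single point or arc, and the neighbors admit a cyclic ordering closing $\partial T$ into a Jordan curve. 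For $A\in\{-2,-1,0,1\}$ with $B\ge 2$ and for $(A,B)=(2,2)$ I would verify that this criterion is met, whereas for $A\ge 2,\,B\ge3$ and $A\le-3,\,B\ge 4$ I would exhibit a neighbor whose contact set splits into two components (or a neighbor configuration that cannot be cyclically closed), forcing non-disk-likeness.

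The main obstacle is the uniform analysis in steps (ii)--(iii): constructing and reading off the neighbor graph for the entire two-parameter family at once, and in particular isolating the exact moment, as $(A,B)$ crosses the stated thresholds, at which an extra neighbor appears or a contact arc breaks into two pieces. Capturing precisely why $(A,B)=(2,2)$ is disk-like while $(2,B)$ for $B\ge3$ is not, and why $A=-2$ is always disk-like while $A\le -3$ with $B\ge4$ is not, is the delicate combinatorial heart of the argument, and I expect the bulk of the work to lie in verifying the connectivity of each contact set across the whole class.
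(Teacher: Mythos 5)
Your treatment of the regime $2|A|-B<3$ follows the paper's route essentially step for step: estimate the neighbor set of $T$ from Akiyama--Thuswaldner's description of the neighbors of $T^{\ell}$ together with Lemma~\ref{twotiles}, construct the neighbor graph uniformly in $(A,B)$, and then apply the Loridant--Luo disk-likeness criteria (in the paper this is Proposition~\ref{disk_rule}: a count of the neighbors plus an $\mathcal{F}$-connectedness test on the digit set, with all configurations of $9$ or more neighbors excluded outright by the Gr\"unbaum--Shephard classification). That half of your plan is sound in outline, modulo actually producing the tables of edges.

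The genuine gap is in the regime $2|A|-B\geq 3$. Your contradiction scheme (assume $T$ is a disk, conclude that $T\cup(-T)$, hence $T^{\ell}$, is a disk) cannot be closed by the tools you name, because there is one configuration you never exclude: $T$ a closed disk with $T\cap(-T)$ a \emph{single point}. Then $T\cup(-T)$ is a wedge of two disks, which is simply connected, has Euler characteristic $1$, and is not disk-like --- fully consistent with Leung--Lau's theorem, so no contradiction arises. Your Euler-characteristic count sees nothing in this case, and it is anyway built on the unjustified step that ``$T^{\ell}$ not disk-like'' implies ``$T^{\ell}$ not simply connected'' (a wedge of disks is a counterexample to that implication, besides the issue that Mayer--Vietoris for arbitrary compact planar sets requires \v{C}ech-type arguments). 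Your fallback --- to ``force $T\cap(-T)$ to be connected'' by analyzing its sub-GIFS --- cannot work unconditionally: $T\cap(-T)$ is determined by the IFS data independently of any disk-likeness hypothesis, and in this parameter range $T$ is in fact \emph{not} disk-like, so there is no reason for that intersection to be connected; connectedness of neighbor intersections is a consequence of disk-likeness via tiling topology, not something one can read off the subdivision. The paper closes exactly this hole with two ingredients absent from your proposal: (a) Lemma~\ref{Intersection}, showing $\sharp\bigl(T\cap c(T)\bigr)\geq 2$ whenever $2|A|\geq B+3$ --- proved for $B>3$ by exhibiting a symmetric pair of fixed points $\mathrm{Fix}(f_p)=-\mathrm{Fix}(f_q)$ of the IFS maps, and for $|A|=B=3$ by explicit eventually periodic walks in the boundary graph; and (b) the structure theorem \cite[Proposition 4.1(2)]{LoridantLuoThuswaldner07}, asserting that a neighbor intersection of a disk-like crystile containing at least two points is a simple arc. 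Only the combination (disk-like $+$ at least two common points $\Rightarrow$ arc $\Rightarrow$ union is a disk) contradicts Leung--Lau: without (a) the wedge configuration survives, and without (b) you have no way to pass from ``at least two points'' to ``arc.''
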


The paper is organized as follows. In
Section~\ref{sec_basic}, we give basic definitions on crystallographic groups and general properties of the class of crystallographic replication tiles under consideration.  Sections~\ref{sec_neigh1} and~\ref{sec_neigh2} are devoted to the construction of the neighbor graphs for part of this class. They will be the main tool for our topological study. In Section~\ref{sec_disklikepos} and Section \ref{sec_disklikeneg}, we characterize the disk-like tiles among our class for the range of parameters $A,B$ satisfying $2|A|-B<3$. In Section~\ref{sec_nondisk}, we show that $T$ is not disk-like for all parameters satisfying $2|A|-B\geq 3$. Finally, Section~\ref{sec_example} illustrates the theorem by examples.


\section{Preliminaries}\label{sec_basic}

\subsection{Basic definitions}
Let us recall some definitions and facts about tilings and crystallographic replication tiles (\emph{crystiles} for short).

A tiling of $\mathbb{R}^2$ is a cover of the space by nonoverlapping sets, \emph{i.e.}, such that the interiors of two distinct sets of the cover are disjoint. Some particular tilings use a single tile $T$ with $\overline{T^{\circ}}=T$
and a family $\Gamma$ of isometries of $\mathbb{R}^2$ such that 
$$\mathbb{R}^2=\bigcup_{\gamma\in\Gamma} \gamma(T).$$
Assume that $\Gamma$ contains $id$, the identity map of $\mathbb{R}^2$. Then $T=id(T)$ is called the \emph{central tile} of the tiling. Also, two distinct tiles are said to be \emph{neighbors} if they have common points. The neighbor set of $T$ is then given by 
$$\mathcal{S}=\{\gamma\in\Gamma\setminus \{id\}; \gamma(T)\cap T\neq \emptyset\}.$$
It is symmetric and it generates $\Gamma$ ($\Gamma=\langle\mathcal{S}\rangle$). The tiles considered in this paper will be compact and the tilings locally finite, \emph{i.e.}, every compact set intersects finitely many tiles of the tiling. Thus $\mathcal{S}$ is always a finite set here.

Among the possible neighbors of a tile, we consider the following two kinds of neighbors.  Two neighbors are  called \emph{vertex neighbors} if they have only one common point. Two neighbors are \emph{adjacent neighbors} if the interior of their union contains a point of their intersection. 
The \emph{adjacent neighbor set} $\mathcal{A}\subset\mathcal{S}$ is then defined as the set of adjacent neighbors of the identity: 
$$\mathcal{A}=\{\gamma\in\mathcal{S}; T\cap \gamma(T) \cap(T\cup\gamma(T))^\circ\neq \emptyset\}.$$
The neighbor (resp. adjacent neighbor) set of a tile $\gamma(T)~(\gamma\in\Gamma)$ is equal to $\gamma\mathcal{S}$ (resp. $\gamma\mathcal{A}$). 

We will deal with families $\Gamma$ of isometries  that are \emph{crystallographic groups} in dimension 2, \emph{i.e.}, discrete cocompact subgroups $\Gamma$ of the group Isom($\mathbb{R}^2$) of all isometries on $\mathbb{R}^2$ with respect to some metric. By a theorem of Bieberbach (see \cite{Burckhardt47}), a crystallographic group $\Gamma$ in dimension $2$ contains a group $\Lambda$ of translations isomorphic to the lattice $\mathbb{Z}^2$, and the quotient group $\Gamma/\Lambda$, called \emph{point group}, is finite. There are $17$ nonisomorphic such groups. However, in this paper, we will mainly consider 
the following crystallographic $p2$-groups.

\begin{definition}\label{P2}
Let $a(x,y)=(x+1,y),~ b(x,y)=(x,y+1), ~c(x,y)=(-x,-y)$. Then a $\emph{p2-group}$ is a group of isometries of  $\mathbb{R}^2$ isomorphic to the subgroup of Isom($\mathbb{R}^2$) generated by the translations $a,b$ and the $\pi$-rotation $c$.
\end{definition}

For example, the standard $p2$-group $\Gamma$ has the form 
\begin{equation}\label{Gamma}
\Gamma=\{a^pb^qc^r; p,q\in\mathbb{Z}, r\in\{0,1\}\},
\end{equation} 
and it is a crystallographic group.
We will call a tiling with respect to a $p2$-group a $\emph{p2}$-\emph{tiling}, and a tiling with respect to a lattice group (\emph{i.e.}, for which the point group only contains the class of the identity map of $\mathbb{R}^2$)  a \emph{lattice tiling}.

We will be concerned with self-replicating tiles constructed in the following way. We refer the reader to~\cite{Gelbrich94, LoridantLuoThuswaldner07} for further information about these tiles.
\begin{definition}
A \emph{crystallographic replication tile} with respect to a crystallographic group $\Gamma$ is a compact nonempty set $T\subset \mathbb{R}^n$ with the following properties:
\begin{itemize}
\item The family $\{\gamma(T); \gamma\in\Gamma\}$ is a tiling of $\mathbb{R}^n$.
\item There is an expanding affine map $g: \mathbb{R}^n\longrightarrow\mathbb{R}^n$ such that $g\circ\Gamma\circ g^{-1}\subset\Gamma$ and there exists a finite collection $\mathcal{D}\subset \Gamma$ called \emph{digit set} such that  
$$g(T)=\bigcup_{\delta\in\mathcal{D}}\delta(T).$$
\end{itemize}
\end{definition}

\subsection{Lattice tiling and $p2$-tiling}
From now on, $\Gamma$ is the standard $p2$-group defined \eqref{Gamma}. We recall that an expanding affine map $g$ in $\mathbb{R}^n$ has the form $g(x)=Mx+t$, where $t\in\mathbb{R}^n$ and $M$ is an $n\times n$-matrix whose eigenvalues all have modulus greater than 1. 

We consider a special class of $p2$-crystallographic replication tiles, closely related to the class of self-affine tiles with collinear digit set studied by Leung and Lau in~\cite{LauLeung07}. For $A,B\in\mathbb{Z}$, $B\geq 2$, let $\tilde{M}\in\mathbb{Z}^{2\times2}$ be a matrix with characteristic polynomial $x^2+Ax+B$. Then $\tilde{M}$ is expanding, \emph{i.e.}, its eigenvalues are greater than $1$ in modulus, if and only if 
$|A|\leq B$. Moreover, let ${\bf v}\in\mathbb{Z}^2$ such that $({\bf v},\tilde{M}{\bf v})$ are linearly independent and $\tilde{a}(x,y)=(x,y)+{\bf v}$. We set $\tilde{g}({\bf x})=\tilde{M}{\bf x} +\frac{B-1}{2}{\bf v}$. Then one can check that the digit set
{\small $$\tilde{\mathcal{D}}=\{id,\tilde{a},\ldots,\tilde{a}^{B-2},c\}
$$}
is a complete set  of right coset representatives of the subgroup $\tilde{g}\circ\Gamma\circ \tilde{g}^{-1}$. Therefore, by a result of Gelbrich~\cite{Gelbrich94}, the equation 
{\small $$\tilde{g}(\tilde{T}) =\bigcup_{\delta\in\tilde{\mathcal{D}}}\delta(\tilde{T})
$$}
defines a unique nonempty compact set $\tilde{T}(A,B)=\tilde{T}$ satisfying $\overline{\tilde{T}^o}=\tilde{T}$. The purpose of this paper is the topological study of the tiles $\tilde{T}$. In fact, we can reduce this study to the following subclass. Let
{\small \begin{equation}\label{expand-map}
g(x,y)=\left(\begin{matrix}
0 &-B\\
1&-A\\
\end{matrix}
\right)
\left(\begin{matrix}
x\\
y\\
\end{matrix}
\right)
+
\left(\begin{matrix}
\frac{B-1}{2}\\
0\\
\end{matrix}
\right)
\end{equation} }
and
{\small \begin{equation}\label{digit}
\begin{matrix}
\mathcal{D}=\{id, a,\dots,a^{B-2},c\} &  \text{ if } B\geq 3,\\
\mathcal{D}=\{id,c\}~~\quad\quad\quad\quad\quad& \text{ if } B=2,
\end{matrix}
\end{equation}}
which is a complete right residue system of $\Gamma/g\Gamma g^{-1}$. We denote by $T(A,B)=T$ the associated tile satisfying {\small $$g(T)=\bigcup_{\delta\in\mathcal{D}}\delta(T).$$}
\begin{lemma}In the above notations, let $C$ denote the matrix of change of base from the canonical base to the base $({\bf v},\tilde{M}{\bf v})$. Then 
$$\tilde{T}=CT.
$$
\end{lemma}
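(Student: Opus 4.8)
The plan is to show that $S := CT$ satisfies the very set equation defining $\tilde T$, and then to conclude $S=\tilde T$ from the uniqueness of the compact solution. Writing $C=[\,\mathbf v\mid \tilde M\mathbf v\,]$ for the matrix whose columns are $\mathbf v$ and $\tilde M\mathbf v$ (so that $C\mathbf e_1=\mathbf v$ and $C\mathbf e_2=\tilde M\mathbf v$, where $\mathbf e_1,\mathbf e_2$ is the canonical basis), the first and central step is to identify $M$ as the matrix of $\tilde M$ in the basis $(\mathbf v,\tilde M\mathbf v)$. Indeed $\tilde M(\mathbf v)=\tilde M\mathbf v$ has coordinates $(0,1)^{\mathsf T}$ in this basis, while Cayley--Hamilton gives $\tilde M^2=-A\tilde M-B\,\mathrm{Id}$, whence $\tilde M(\tilde M\mathbf v)=-B\mathbf v-A(\tilde M\mathbf v)$ has coordinates $(-B,-A)^{\mathsf T}$. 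These two columns are exactly those of $M$ from \eqref{expand-map}, so $C^{-1}\tilde M C=M$, i.e. $\tilde M C=CM$.

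Next I would track the inhomogeneous term and the digits through $C$. Since $C\mathbf e_1=\mathbf v$, the translation part $\big(\tfrac{B-1}{2},0\big)^{\mathsf T}$ of $g$ maps to $C\big(\tfrac{B-1}{2},0\big)^{\mathsf T}=\tfrac{B-1}{2}\mathbf v$, the inhomogeneous term of $\tilde g$. Likewise the digit $a$ (translation by $\mathbf e_1$) satisfies $C\circ a^{k}\circ C^{-1}=\tilde a^{k}$ (translation by $k\mathbf v$) for $0\le k\le B-2$, because conjugating a translation by the linear map $C$ translates by the image vector; and the rotation $c=-\mathrm{Id}$ is central, so $C\circ c\circ C^{-1}=c$. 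Thus $C$ conjugates the digit set $\mathcal{D}$ onto $\tilde{\mathcal{D}}$ and $g$ onto $\tilde g$.

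Applying the linear bijection $C$ to both sides of $g(T)=\bigcup_{\delta\in\mathcal{D}}\delta(T)$, and using $CMT=\tilde M CT=\tilde M S$ together with the correspondences above, then yields
$$\tilde M S+\tfrac{B-1}{2}\mathbf v=\bigcup_{k=0}^{B-2}(S+k\mathbf v)\cup(-S)=\bigcup_{\delta\in\tilde{\mathcal{D}}}\delta(S),$$
so $S=CT$ is a nonempty compact solution of the defining equation of $\tilde T$ (the unified range $0\le k\le B-2$ also covers the case $B=2$, where it reduces to $S\cup(-S)$). Since the maps $\tilde g^{-1}\circ\delta$ ($\delta\in\tilde{\mathcal{D}}$) are contractions, the uniqueness from Gelbrich's result stated above forces $S=\tilde T$, i.e. $\tilde T=CT$. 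The computation is essentially routine; the only points demanding care are the Cayley--Hamilton identification of $M$ as the companion matrix of $\tilde M$ in the basis $(\mathbf v,\tilde M\mathbf v)$, and the verification that $C$ carries $\mathcal{D}$ to $\tilde{\mathcal{D}}$ --- in particular that the rotation $c$, being $-\mathrm{Id}$, is left unchanged by the conjugation.
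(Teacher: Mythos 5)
Your proof is correct and follows essentially the same route as the paper's: both hinge on the conjugation relation $\tilde{M}=CMC^{-1}$ and the uniqueness of the attractor of the iterated function system, the only (immaterial) difference being that you push $T$'s set equation forward by $C$ while the paper pulls $\tilde{T}$'s equation back by $C^{-1}$. If anything, your write-up is slightly more complete, since you justify $C^{-1}\tilde{M}C=M$ via Cayley--Hamilton and check that conjugation by $C$ carries $\mathcal{D}$ to $\tilde{\mathcal{D}}$, steps the paper leaves implicit.
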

\begin{proof}
We have by definition
{\small $$\tilde{M}\tilde{T}+\frac{B-1}{2}{\bf v} =\tilde{T}\cup(\tilde{T}+{\bf v})\cup 
\cdots\cup(\tilde{T}+(B-2){\bf v})\cup(-\tilde{T}).
$$}
Using the equality $\tilde{M}=CMC^{-1}$ and multiplying the above equation by $C^{-1}$, we obtain
{\Small $$MC^{-1}\tilde{T}+\left(\begin{array}{c}\frac{B-1}{2}\\0\end{array}\right) =C^{-1}\tilde{T}\cup\left(C^{-1}\tilde{T}+\left(\begin{array}{c}1\\0\end{array}\right)\right)\cup
\cdots\cup\left(C^{-1}\tilde{T}+\left(\begin{array}{c}B-2\\0\end{array}\right)\right)\cup(-C^{-1}\tilde{T}).
$$ }
By uniqueness of the IFS-attractor, this leads to $C^{-1}\tilde{T}=T$.
\end{proof}
By this lemma, the topology of $\tilde{T}$ is the same as the topology of $T$, that is why the proofs will be written for the class of tiles defined by~(\ref{expand-map}) and~(\ref{digit}). The relation to self-affine tiles with collinear digit set now reads as follows. 
Let {\small $M=\left(\begin{matrix}
0 & -B\\
1 & -A\\
\end{matrix}
\right)\in\mathbb{Z}^{2\times2}$ and $$\mathcal{N}=\left\{ 
\left(\begin{matrix}
0\\
0\\
\end{matrix}
\right),
\left(\begin{matrix}
1\\
0\\
\end{matrix}
\right)
,\dots,
\left(\begin{matrix}
B-1\\
0\\
\end{matrix}\right)\right\}.$$ }

We denote by $T^{\ell}(A,B)=T^{\ell}$ the associated lattice tile satisfying {\small $$MT^{\ell}=\bigcup_{d\in\mathcal{N}}(T^{\ell}+d).$$}
Note that the crystallographic data $(p2,g,\mathcal{D})$ is very similar to the lattice data $(\mathbb{Z}^2, M, \mathcal{N})$. However, in the lattice case, we often prefer to consider the above translation vectors rather than the translation mappings $id,a,\dots,a^{B-1}$. Moreover,  \cite{Loridant12} also gave the following correspondence between the crystallographic tiles and the associated lattice tiles of the above class.
\begin{lemma}\label{twotiles}
With the above data, let $T$ satisfy
$g(T)=\bigcup_{\delta\in\mathcal{D}}\delta(T)$
and  $T^{\ell}$ satisfy $MT^{\ell}=\bigcup_{d\in \mathcal{N}}(T^{\ell}+d).$
Then 
{\small \begin{equation}
T^{\ell}=T\cup(-T)+(M-I_2)^{-1}
\left(\begin{matrix}
\frac{B-1}{2}\\
0
\end{matrix}\right),
\end{equation}}
where $I_2$ is the $2\times 2$ identity matrix.
\end{lemma}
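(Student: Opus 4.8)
The plan is to exhibit the right-hand side as a solution of the set equation that \emph{defines} $T^{\ell}$, and then invoke uniqueness of the attractor. Write $k=\tfrac{B-1}{2}$, put $\mathbf{e}=\left(\begin{smallmatrix}1\\0\end{smallmatrix}\right)$, and set $U=T\cup(-T)$ and $\mathbf{w}=(M-I_2)^{-1}k\mathbf{e}$; the latter is well defined because $M$ is expanding, so $1$ is not an eigenvalue and $M-I_2$ is invertible. With $S:=U+\mathbf{w}$, the goal is to verify
$$
MS=\bigcup_{\ell=0}^{B-1}(S+\ell\mathbf{e}),
$$
which is exactly the equation characterizing $T^{\ell}$. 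Since both $S$ and $T^{\ell}$ are then nonempty compact solutions of the same equation, and since $M$ expanding makes the maps $\mathbf{x}\mapsto M^{-1}(\mathbf{x}+d)$ contractions, Hutchinson's uniqueness of the attractor forces $S=T^{\ell}$, which is the claim.

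The first step is to obtain a self-replicating equation for $U$ alone. Starting from $MT+k\mathbf{e}=\bigcup_{j=0}^{B-2}(T+j\mathbf{e})\cup(-T)$ and applying the rotation $c$ (i.e.\ multiplying every set by $-1$, which commutes with $M$), one gets the companion relation $M(-T)=\bigcup_{j=0}^{B-2}(-T+(k-j)\mathbf{e})\cup(T+k\mathbf{e})$. Taking the union $MU=MT\cup M(-T)$ and solving each equation for $MT$ and $M(-T)$, the translation amounts in front of the $T$-copies run through $\{\,j-k: j=0,\dots,B-2\,\}\cup\{k\}$, and symmetrically for the $(-T)$-copies. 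This is the combinatorial heart of the argument: since $2k+1=B$, this index set is exactly $\{-k,-k+1,\dots,k\}$, a centered block of $B$ consecutive values, so both families reassemble to give
$$
MU=\bigcup_{i=-k}^{k}(U+i\mathbf{e}).
$$
In other words, the ``folded'' digit block $\{0,\dots,B-2\}$ of the crystile, once complemented by its rotated copy, unfolds into a full symmetric block of $B$ translates.

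It then remains to re-center. Using $(M-I_2)\mathbf{w}=k\mathbf{e}$, hence $M\mathbf{w}=\mathbf{w}+k\mathbf{e}$, one computes
$$
MS=MU+M\mathbf{w}=\bigcup_{i=-k}^{k}(U+i\mathbf{e})+\mathbf{w}+k\mathbf{e}=\bigcup_{i=-k}^{k}\bigl(S+(i+k)\mathbf{e}\bigr)=\bigcup_{\ell=0}^{B-1}(S+\ell\mathbf{e}),
$$
since $i+k$ runs over $\{0,1,\dots,2k\}=\{0,\dots,B-1\}$. This is precisely the lattice set equation, completing the verification. The degenerate case $B=2$ (where $\mathcal{D}=\{id,c\}$ and $k=\tfrac12$) is covered verbatim by the same index count. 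The only delicate point in the whole argument is the bookkeeping of the union indices in the middle step, namely checking that the symmetric digit set of $T$ recombines into the consecutive digit set $\mathcal{N}$ of $T^{\ell}$ after the shift by $\mathbf{w}$; every other step is a formal manipulation of the set equations.
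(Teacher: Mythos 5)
Your proof is correct. There is nothing in the paper to compare it against: Lemma~\ref{twotiles} is stated there without proof, being quoted from the first author's earlier work~\cite{Loridant12}, so your argument supplies a self-contained verification that the paper omits. The route you take is the natural one and all steps check out. The key combinatorial step is right: writing $k=\tfrac{B-1}{2}$, $U=T\cup(-T)$, the crystile equation and its image under the rotation $c$ give translation amounts $\{j-k\}_{j=0}^{B-2}\cup\{k\}$ for the $T$-copies and $\{-k\}\cup\{k-j\}_{j=0}^{B-2}$ for the $(-T)$-copies, and both sets equal $\{-k,-k+1,\dots,k\}$, a block of $B=2k+1$ consecutive values, so $MU=\bigcup_{i=-k}^{k}(U+i\mathbf{e})$ (when $B$ is even these $i$ are half-integers, which is harmless since they are merely translation amounts). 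The recentering identity $M\mathbf{w}=\mathbf{w}+k\mathbf{e}$ for $\mathbf{w}=(M-I_2)^{-1}k\mathbf{e}$ then converts this into the lattice equation $MS=\bigcup_{\ell=0}^{B-1}(S+\ell\mathbf{e})$ for $S=U+\mathbf{w}$, and Hutchinson uniqueness (legitimate here: $M^{-1}$ is a contraction in a norm adapted to the expanding matrix $M$, and both $S$ and $T^{\ell}$ are nonempty and compact) forces $S=T^{\ell}$. You also correctly justify the invertibility of $M-I_2$ via the expanding property; alternatively, the characteristic polynomial gives $\det(M-I_2)=1+A+B\geq 1>0$ under the standing assumptions $|A|\leq B$, $B\geq 2$. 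The one point worth flagging is cosmetic: your displayed formula $MS=MU+M\mathbf{w}$ mixes a set with a union taken afterwards; it should be read as translating the whole union $\bigcup_{i=-k}^{k}(U+i\mathbf{e})$ by the vector $M\mathbf{w}=\mathbf{w}+k\mathbf{e}$, which is what you in fact do, so the argument stands.
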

Hereafter, we denote the crystallographic tile and lattice tile associated with the above data $(p2,g, \mathcal{D})$ and $(\mathbb{Z}^2,M,\mathcal{N})$ by $T$ and $T^{\ell}$, respectively.
\begin{lemma}\label{lem:tiling}$T$ is a crystallographic replication tile.
\end{lemma}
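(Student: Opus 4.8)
The second defining property of a crystile---the existence of an expanding $g$ with $g\Gamma g^{-1}\subset\Gamma$ and the digit equation $g(T)=\bigcup_{\delta\in\mathcal{D}}\delta(T)$---is already in hand, since $\mathcal{D}$ is a complete right residue system of $\Gamma/g\Gamma g^{-1}$ and the existence of $T$ is guaranteed by Gelbrich's theorem~\cite{Gelbrich94}. The whole content of the lemma is therefore the \emph{tiling} property, and the delicate point is that the tiling set must be the \emph{entire} group $\Gamma=p2$, not merely the proper tiling subset abstractly furnished by Gelbrich. The plan is to deduce this from the $\mathbb{Z}^2$-tiling generated by $T^{\ell}$ through the correspondence $T^{\ell}=(T\cup(-T))+w$ of Lemma~\ref{twotiles}, where $w$ denotes the translation vector appearing there.

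First I would record that the maps of $\mathcal{D}$ pack $g(T)$ without measure overlap. Since $g$ is affine with linear part $M$, one has $\lambda(g(T))=|\det M|\,\lambda(T)=B\,\lambda(T)$, while $|\mathcal{D}|=B$ and every $\delta\in\mathcal{D}$ is an isometry, so $\sum_{\delta\in\mathcal{D}}\lambda(\delta(T))=B\,\lambda(T)$. As $T$ is compact with nonempty interior, $0<\lambda(T)<\infty$, and equality in the subadditivity bound $\lambda\bigl(\bigcup_{\delta}\delta(T)\bigr)\le\sum_{\delta}\lambda(\delta(T))$ forces the pieces $\{\delta(T);\,\delta\in\mathcal{D}\}$ to be pairwise measure-disjoint. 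In particular $\lambda(T\cap(-T))=0$, because $id,c\in\mathcal{D}$ and $c(T)=-T$. Since $T=\overline{T^{o}}$, the open set $T^{o}\cap(-T)^{o}$ is contained in the null set $T\cap(-T)$ and is hence empty, so $T$ and $-T$ already have disjoint interiors.

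Next I would invoke that $T^{\ell}$ is a genuine $\mathbb{Z}^2$-tile: as $\mathcal{N}$ is a complete set of coset representatives of $\mathbb{Z}^2/M\mathbb{Z}^2$ (there are $B=|\det M|$ digits and they are pairwise incongruent), the standard theory of self-affine tiles with collinear digit set gives that $\{T^{\ell}+z;\,z\in\mathbb{Z}^2\}$ tiles $\mathbb{R}^2$~\cite{LagariasWang96a,LauLeung07}. Translating this tiling by $-w$ and using Lemma~\ref{twotiles}, the family $\{S+z;\,z\in\mathbb{Z}^2\}$ with $S:=T\cup(-T)$ is again a tiling, i.e.\ $\sum_{z\in\mathbb{Z}^2}\mathbf{1}_{S+z}=1$ almost everywhere. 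I would then unfold $S$ into its two pieces: because $\lambda(T\cap(-T))=0$ we have $\mathbf{1}_{S+z}=\mathbf{1}_{T+z}+\mathbf{1}_{-T+z}$ a.e., and since $\{\gamma(T);\,\gamma\in\Gamma\}=\{T+z;\,z\in\mathbb{Z}^2\}\cup\{-T+z;\,z\in\mathbb{Z}^2\}$ this yields $\sum_{\gamma\in\Gamma}\mathbf{1}_{\gamma(T)}=1$ a.e. Combined with the genuine covering $\bigcup_{\gamma\in\Gamma}\gamma(T)=\bigcup_{z}(S+z)=\mathbb{R}^2$, this shows that the $p2$-translates of $T$ cover the plane and are pairwise measure-disjoint; the passage from null intersections to disjoint interiors is exactly as in the previous paragraph, so $\{\gamma(T);\,\gamma\in\Gamma\}$ is a tiling and $T$ is a crystile.

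The step I expect to be most delicate is the claim that $T^{\ell}$ tiles by the full lattice $\mathbb{Z}^2$, and not merely by a proper sublattice, since it is precisely the ``multiplicity one'' of the indicator sum that propagates, via the unfolding of $S$, into the assertion that the \emph{whole} group $p2$ serves as the tiling set. This is exactly the content the introduction alludes to when it promises to establish the tiling property ``for all values of $A$'', going beyond the range $A\ge -1$ covered by the crystallographic number system argument of~\cite{Loridant12}. I would therefore take care to cite the collinear-digit-set tiling theorem in the precise form needed and to verify the pairwise incongruence of the digits of $\mathcal{N}$ modulo $M\mathbb{Z}^2$ directly from $|\det M|=B$.
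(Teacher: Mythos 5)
Your proof is correct, and its skeleton is the same as the paper's: both arguments reduce the $p2$-tiling property to two ingredients, namely (a) the fact that $T^{\ell}$ tiles $\mathbb{R}^2$ by the \emph{full} lattice $\mathbb{Z}^2$, and (b) the fact that $T$ and $c(T)=-T$ do not overlap, the latter obtained in both cases by the identical measure count on the set equation (you phrase it as equality in subadditivity for $g(T)=\bigcup_{\delta\in\mathcal{D}}\delta(T)$; the paper applies $g^{-1}$ and counts $B$ pieces of measure $\alpha/B$ --- the same computation). Where you genuinely diverge is ingredient (a): the paper gets the lattice tiling from the canonical number system property for $A\geq-1$ (citing \cite{AkiyamaThuswaldner05}) and extends it to $A\leq 0$ via the reflection identity~\eqref{relat_pos_neg} from \cite{AkiyamaLoridant10}, whereas you invoke the Lagarias--Wang/Leung--Lau theory of standard (collinear) digit sets uniformly in $A$, avoiding the case split on the sign of $A$ altogether. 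You are also more explicit than the paper about the unfolding step: the indicator-sum argument showing that measure-disjointness of $T$ and $-T$ converts the $\mathbb{Z}^2$-tiling by $S=T\cup(-T)$ into a $p2$-tiling by $T$, including why $T+z$ and $-T+z'$ with $z\neq z'$ cannot overlap; the paper compresses all of this into ``we just need to show that $T$ and $c(T)=-T$ do not overlap.''

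One point needs tightening. Your parenthetical justification of (a) --- ``there are $B=|\det M|$ digits and they are pairwise incongruent'' --- is not sufficient by itself: a standard digit set only guarantees that $T^{\ell}$ tiles by the smallest $M$-invariant sublattice of $\mathbb{Z}^2$ containing $\mathcal{N}-\mathcal{N}$, and this sublattice can be proper (in dimension one, $M=2$ and $\mathcal{N}=\{0,3\}$ give $T^{\ell}=[0,3]$, which tiles by $3\mathbb{Z}$, not by $\mathbb{Z}$). To conclude a tiling by all of $\mathbb{Z}^2$ you must add that $e_1=\left(\begin{smallmatrix}1\\0\end{smallmatrix}\right)\in\mathcal{N}-\mathcal{N}$ and $Me_1=\left(\begin{smallmatrix}0\\1\end{smallmatrix}\right)$, so that this invariant sublattice is all of $\mathbb{Z}^2$; equivalently, cite the lattice-tiling theorem of \cite{LagariasWang96b}, or Leung--Lau's statement in \cite{LauLeung07} for collinear digit sets with $({\bf v},M{\bf v})$ linearly independent, which is exactly the form needed here. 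This is a one-line fix, consistent with your own caveat about citing ``the precise form needed,'' so it is an imprecision rather than a gap.
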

\begin{proof}
We only need to prove that the family $\{\gamma(T);\gamma\in\Gamma\}$ is a tiling of $\mathbb{R}^2$. We recall that $T$ has nonempty interior by a result of Gelbrich \cite{Gelbrich94}, because $\mathcal{D}$ is a complete set of right coset representatives of $g\circ\Gamma\circ g^{-1}$. Now, for $A\geq-1$, the family $\{T^{\ell}+z;z\in\mathbb{Z}^2\}$ is a tiling of $\mathbb{R}^2$, since the tile $T^{\ell}$ is associated to a quadratic canonical number system (see \emph{e.g.}~\cite{AkiyamaThuswaldner05}). This also holds for the tiles $T^{\ell}$ with $A\leq0$, as it is mentioned in~\cite{AkiyamaLoridant10} that changing $A$ to $-A$, for a fixed $B$, results in an isometric transformation for the associated tiles $T^{\ell}$ (see Equation~(\ref{relat_pos_neg})). Therefore, by Lemma~\ref{twotiles}, we just need to show that $T$ and $c(T)=-T$ do not overlap. This follows from the fact that $T$ has nonempty interior and satisfies the set equation
{\small $$T=g^{-1}(T)\cup g^{-1}(-T)\cup g^{-1}\circ a(T)\cdots\cup g^{-1}\circ a^{B-2}(T).
$$}
Indeed, each of the $B$ sets on the right side of this equation has two-dimensional Lebesgue measure $\alpha/B$, where $\alpha>0$ is the  two-dimensional Lebesgue measure of $T$. The total measure of the right side being equal to $\alpha$,  the sets can not overlap. 
\end{proof}
Note that for $-1\leq A\leq B$, the above lemma is also a consequence of the crystallographic number system property~\cite{Loridant12}.

\begin{remark} In the above proof,  we mentioned the easy relation~(\ref{relat_pos_neg})  between the lattice tiles $T^{\ell}$ associated to $A$ and $-A$. It turns out that no such easy relation can be found for the corresponding tiles $T$, and the topology may become different  when changing $A$ to $-A$  (see Section~\ref{sec_disklikeneg}, Figure \ref{fig:AMA}).   
\end{remark}
For the lattice data $(\mathbb{Z}^2,M,\mathcal{N})$, the following proposition is proved by Leung and Lau~\cite{LauLeung07}.

\begin{proposition}\label{disklattice}
Let $A \text{ and } B$ satisfy $|A|\leq B \text{ and } B\geq 2$. Then $T^{\ell}$ is homeomorphic to a closed disk if and only if  $2|A|<B+3$.
\end{proposition}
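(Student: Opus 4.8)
The plan is to deduce the proposition from the disk-likeness criterion of Bandt and Wang \cite{BandtWang01} after an explicit determination of the neighbors of $T^{\ell}$ in the lattice tiling $\{T^{\ell}+z;\ z\in\Z^2\}$. Two preliminary reductions simplify the task. First, since the map $A\mapsto -A$ (for fixed $B$) changes $T^{\ell}$ only by an isometry \cite{AkiyamaLoridant10}, I may assume $A\ge 0$ throughout and rewrite the target inequality $2|A|<B+3$ in the integer form $2A\le B+2$. Second, the matrix $M$ satisfies $M(1,0)^t=(0,1)^t$ and the Cayley--Hamilton relation $M^2+AM+BI_2=0$, so that $\{(1,0)^t,(0,1)^t\}$ is a $\Z$-basis in which $(\Z^2,M,\mathcal N)$ is, for $A\ge -1$, a quadratic canonical number system. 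In particular $T^{\ell}$ is connected: this follows from the Kirat--Lau criterion \cite{KiratLau00} applied to the consecutive collinear digits $\mathcal N$, or from the general connectedness of canonical number system tiles.

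For a connected self-affine $\Z^2$-tile, Bandt and Wang \cite{BandtWang01} characterize disk-likeness through the neighbor set $\mathcal S=\{s\in\Z^2\setminus\{0\};\ T^{\ell}\cap(T^{\ell}+s)\neq\emptyset\}$: the tile $T^{\ell}$ is homeomorphic to a closed disk precisely when $\mathcal S$ realizes one of the two admissible boundary configurations --- six neighbors $\pm s_1,\pm s_2,\pm(s_1+s_2)$ arranged hexagonally, or eight neighbors of ``square-plus-diagonal'' type --- with $T^{\ell}$ meeting each of them along a nondegenerate arc of its boundary. Thus the whole question reduces to computing $\mathcal S$ as a function of the parameters $(A,B)$ and reading off its combinatorial type.

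The computation of $\mathcal S$ is the core of the argument. Applying $M$ to a nonempty intersection $T^{\ell}\cap(T^{\ell}+s)$ and substituting the set equation $MT^{\ell}=\bigcup_{j=0}^{B-1}(T^{\ell}+j(1,0)^t)$ turns membership in $\mathcal S$ into a finite-automaton condition: every $s\in\mathcal S$ satisfies
\[
 Ms+\ell\,(1,0)^t\in\mathcal S\cup\{0\}\qquad\text{for some }\ell\in\{-(B-1),\dots,B-1\},
\]
and $\mathcal S$ is the largest finite subset of $\Z^2$ closed under this relation. Writing $s=(s_1,s_2)^t$ gives $Ms+\ell(1,0)^t=(-Bs_2+\ell,\ s_1-As_2)^t$, and the expansiveness of $M$ confines $\mathcal S$ to an explicit bounded region of $\Z^2$; solving the resulting finite reachability problem then yields $\mathcal S$ in closed form. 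I would carry this out uniformly in $(A,B)$ and show that for $2A\le B+2$ the surviving neighbors form exactly one of the Bandt--Wang disk-like configurations, whereas for $2A\ge B+3$ an additional symmetric pair of neighbors persists.

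The \textbf{main obstacle} is precisely this uniform analysis at the threshold: one must prove, simultaneously for all parameters, that the admissible configuration is lost exactly when $2A$ passes from $B+2$ to $B+3$, and --- crucially --- that the extra neighbors appearing for $2A\ge B+3$ genuinely obstruct disk-likeness rather than completing an acceptable eight-neighbor configuration. Here I would show that these supplementary neighbors meet $T^{\ell}$ only in single points (vertex neighbors), so that the boundary arcs no longer cover the bounding circle; equivalently, the graph-directed system describing $\partial T^{\ell}$ parametrizes a curve with cut points, which cannot bound a disk. Conversely, for $2A\le B+2$ the arcs tile the circle and the Bandt--Wang criterion gives disk-likeness at once. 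The borderline case $2A=B+2$, where the configuration is only marginally admissible, is where the bookkeeping in the automaton is most delicate and would demand the most care.
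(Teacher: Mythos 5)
First, note the benchmark: the paper does not prove Proposition~\ref{disklattice} at all --- it is quoted directly from Leung and Lau \cite{LauLeung07}, and the paper's own work begins downstream of it. So your proposal is an attempt at an independent proof, and its outline (reduce to $A\geq 0$ via the isometry \eqref{relat_pos_neg}, get connectedness from Kirat--Lau, compute the neighbor set $\mathcal{S}^{\ell}$ by the contact/automaton condition, then invoke Bandt--Wang \cite{BandtWang01}) is indeed the architecture of the known proofs; for $2A<B+3$ the six hexagonally placed neighbors are exactly those recorded in Proposition~\ref{neigh_lattice}, and Bandt--Wang then gives disk-likeness. Up to that point your plan is sound.

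Two things, however, prevent the proposal from being a proof. First, the decisive content --- the uniform computation of $\mathcal{S}^{\ell}$ in $(A,B)$ and the verification that the configuration changes exactly when $2A$ passes from $B+2$ to $B+3$ --- is only announced (``I would carry this out''), never performed; you yourself call it the main obstacle, and the borderline case $2A=B+2$ cannot be waved through. Second, and more seriously, the mechanism you propose for the ``only if'' direction is wrong in principle. You argue that for $2A\geq B+3$ the extra neighbors meet $T^{\ell}$ in single points, ``so that the boundary arcs no longer cover the bounding circle.'' But vertex neighbors do not obstruct disk-likeness: in the admissible eight-neighbor configuration $\{\pm s_1,\pm s_2,\pm(s_1+s_2),\pm(s_1-s_2)\}$ that you yourself list, the diagonal neighbors meet the central tile in single points --- the unit square in its lattice tiling has four vertex neighbors and is certainly a disk. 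For the same reason, your insertion of ``meeting each of them along a nondegenerate arc'' into the Bandt--Wang criterion misstates that criterion. A correct necessity argument must show that for $2A\geq B+3$ the neighbor set fails to be of hexagonal or square-plus-diagonal type \emph{at all} (for instance because it contains a collinear pair of neighbors $s$ and $2s$, or more than eight elements), and then appeal to the classification of adjacency structures of disk-like lattice tiles (Gr\"unbaum--Shephard \cite{GruenbaumShephard89}, Bandt--Gelbrich) --- this is precisely the style of argument the paper uses in the crystallographic setting in Theorem~\ref{diskCrystile}. As written, your argument for the non-disk-like half would fail even if the neighbor computation were completed.
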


\section{The neighbor set of $T$ for $A\geq -1$ and $2A<B+3$}\label{sec_neigh1}
For the sake of simplicity, in Sections~\ref{sec_neigh1},~\ref{sec_neigh2} and~\ref{sec_disklikepos}   we will now restrict to the case  $A\geq -1$ and $2A<B+3$ and indicate in Section~\ref{sec_disklikeneg} the method to get the results for $A\leq -2$.  

In this section, we will derive an ``approximation" of the neighbor set $\mathcal{S}$ for $A\geq-1$, $2A<B+3$ from the relationship between the neighbor set of $T$ and the neighbor set of $T^{\ell}$. Akiyama and Thuswaldner prove the following characterization of the  neighbors of $T^{\ell}$ in~\cite{AkiyamaThuswaldner05}.

\begin{proposition}\label{neigh_lattice}
Let $\mathcal{S}^{\ell}$ denote the neighbor set of $T^{\ell}$. If $2A<B+3$ and $A\neq0$, then $\sharp\mathcal{S}^{\ell}=6$. In particular, if $A>0$, then $$\mathcal{S}^{\ell}=\{a^Ab,~a^{A-1}b,~a,~a^{-1},~a^{-A}b^{-1},~a^{-A+1}b^{-1}\};$$
if $A=-1$, we have 
$$\mathcal{S}^{\ell}=\{a^{-1}b, ~b,~a,~a^{-1},~ab^{-1},~b^{-1}\};$$
if $A=0$, we have 
$$\mathcal{S}^{\ell}=\{a,~a^{-1}, ab,a^{-1}b, ~ab^{-1}, a^{-1}b^{-1},~b,~b^{-1}\}.$$
\end{proposition}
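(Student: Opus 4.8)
The plan is to prove this statement about the lattice tile $T^{\ell}$ alone by running the standard contact/neighbor-graph analysis for the self-affine pair $(M,\mathcal{N})$ and then carrying out the resulting finite search explicitly, exploiting the hypothesis $2A<B+3$. First I would reduce the determination of neighbors to infinite walks in a difference graph. Since $T^{\ell}=\bigcup_{d\in\mathcal{N}}M^{-1}(T^{\ell}+d)$, a nonzero $z\in\mathbb{Z}^2$ satisfies $T^{\ell}\cap(T^{\ell}+z)\neq\emptyset$ if and only if there are digits $d,d'\in\mathcal{N}$ with $T^{\ell}\cap(T^{\ell}+z')\neq\emptyset$ for $z'=Mz+(d'-d)$. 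Iterating, $z$ is a neighbor exactly when there is an infinite walk $z=z_0\to z_1\to\cdots$ in the directed graph $G$ on $\mathbb{Z}^2$ whose edges $z\to z'$ are those with $z'-Mz\in\mathcal{N}-\mathcal{N}=\{j(1,0)^{T}:|j|\le B-1\}$; the converse direction, that every such infinite walk yields a genuine common point, follows from the open set condition available for these tiles.

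Next I would establish an a priori bound that makes $G$ effectively finite. Because $M$ is expanding, any vertex carrying an infinite walk has the form $z_0=\sum_{k\ge 1}M^{-k}\eta_k$ with $\eta_k\in\mathcal{N}-\mathcal{N}$, so the relevant vertices lie in a bounded, hence finite, set $V\subset\mathbb{Z}^2$. Writing $z=(m,n)^{T}$ and using $M(m,n)^{T}=(-Bn,\;m-An)^{T}$, an edge forces $m'=-Bn+\delta$ with $|\delta|\le B-1$ and $n'=m-An$. Substituting these relations into the radix-type expansion $z_0=\sum_{k\ge1}M^{-k}\eta_k$ and using $2A<B+3$ to control the geometry of $M^{-1}$, one pins down $|n|\le 1$ and then bounds $|m|$ by a small constant independent of $B$. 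This turns the search for $V$ into a finite, $B$-uniform combinatorial problem.

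Then I would enumerate the survivors. Restricting $G$ to $V$ and keeping only those vertices from which an infinite walk starts, equivalently those reaching a cycle, I would determine the neighbor set case by case. For $A>0$ this should leave exactly the six vectors $(A,1),(A-1,1),(1,0),(-1,0),(-A,-1),(-A+1,-1)$, which under the identification $a^{p}b^{q}\leftrightarrow(p,q)^{T}$ is the announced set; the cases $A=-1$ and $A=0$ are handled separately, the latter producing two additional vectors because $M^{2}=-BI$ equips the graph with extra symmetry. Each surviving vertex is confirmed to be a true neighbor by exhibiting an explicit periodic walk, and the symmetry $z\leftrightarrow -z$ (automatic, since $T^{\ell}\cap(T^{\ell}+z)\neq\emptyset$ iff $T^{\ell}\cap(T^{\ell}-z)\neq\emptyset$) serves as a consistency check on the list.

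The hard part will be the \emph{completeness} of the candidate list \emph{uniformly in} $B$: one must show that no vertex outside the claimed set can carry an infinite bounded walk, for all admissible $B$ at once. This is precisely where $2A<B+3$ is indispensable, since it is the threshold that collapses $V$ to a $B$-independent finite set; beyond this range new bounded orbits appear and $\sharp\mathcal{S}^{\ell}$ grows, consistent with $T^{\ell}$ ceasing to be disk-like by Proposition~\ref{disklattice}. Making the bound of the second step explicit enough to exclude all intermediate vertices — rather than merely confining them to a box whose size depends on $B$ — is the technical heart of the argument.
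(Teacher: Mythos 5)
First, a point of comparison: the paper gives no proof of this proposition at all --- it is quoted from the cited work of Akiyama and Thuswaldner on canonical number system tiles, so there is no internal argument to measure your proposal against. Your framework (neighbors of $T^{\ell}$ correspond to infinite walks in the graph on $\mathbb{Z}^{2}$ with edges $z\to z'$ whenever $z'-Mz\in\mathcal{N}-\mathcal{N}$, restricted to the finite set of candidates lying in the difference set $(T^{\ell}-T^{\ell})\cap\mathbb{Z}^{2}$) is the standard and correct starting point, and it is indeed the machinery behind the cited result. One small inaccuracy: the equivalence between infinite walks and nonempty intersections needs no open set condition; both directions are elementary (the radix-type limit point gives one direction, iteration of the set equation plus finiteness of the candidate set gives the other).

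The genuine gap is exactly at the step you yourself flag as the ``technical heart,'' and it is worse than merely being unexecuted: your claimed intermediate bound is inconsistent with the conclusion you are trying to prove. You assert that $2A<B+3$ lets one ``pin down $|n|\le 1$ and then bound $|m|$ by a small constant independent of $B$.'' But the neighbor set for $A>0$ contains $a^{A}b=(A,1)^{T}$ and $a^{-A}b^{-1}=(-A,-1)^{T}$, whose first coordinates have magnitude $A$, and under $2A<B+3$ the value of $A$ may grow like $B/2$; a $B$-independent box on $|m|$ would exclude the very neighbors in the statement. The correct confinement is to a region skewed along the expansion, e.g.\ $|n|\le 1$ together with $|m-An|\le 1$ (all six listed neighbors satisfy this, none satisfies a uniform bound on $|m|$). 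Moreover, the candidate set genuinely grows with $B$ (its first-coordinate extent is of order $A$), so completing the argument requires a parametrized, $B$-uniform description of the graph and a proof that every candidate outside the claimed list --- for instance states of the form $(m,0)$ with $2\le m\le A$, or $(m,1)$ with $m\notin\{A,A-1\}$ --- admits no infinite walk. That enumeration is the entire content of the proposition (it is the analogue, for the lattice tile, of what the paper later does in Lemma 3.6 and Tables 1--3 for the crystallographic tile), and no argument for it appears in your proposal. As it stands, you have correctly set up the standard machinery and then asserted the answer; the decisive combinatorial step is both missing and, in its present formulation, false.
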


The following lemma gives a first coarse estimate of the neighbor set of $T$ in terms of the neighbor set of $T^{\ell}$.
\begin{lemma}\label{neigh_crystile}
Let $\mathcal{S}, \mathcal{S}^{\ell}$ be the neighbor sets of $T$ and $T^{\ell}$, respectively, then $\mathcal{S}$ is a subset of $\mathcal{S}^{\ell}\cup\{c\}\cup{\mathcal{S}^{\ell}c}$, where $\mathcal{S}^{\ell}c=\{s\circ c;~~ s\in\mathcal{S}^{\ell}\}.$
\end{lemma}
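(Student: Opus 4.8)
The plan is to relate the neighbors of the crystile $T$ to those of the lattice tile $T^{\ell}$ via the decomposition $T^{\ell} = T \cup (-T) + w$, where $w = (M-I_2)^{-1}(\tfrac{B-1}{2}, 0)^{\mathsf{t}}$ as provided by Lemma~\ref{twotiles}. The key observation is that the neighbors of $T$ in the $p2$-tiling $\{\gamma(T);\gamma\in\Gamma\}$ are a subset of the isometries $\gamma \in \Gamma$ such that $\gamma(T)$ touches $T$; writing each $\gamma = a^p b^q c^r$ with $r\in\{0,1\}$, I would split into the case $r=0$ (a pure translation) and $r=1$ (a translation composed with the rotation $c$).

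First I would handle the translation part. Since the standard $p2$-group $\Gamma$ contains the translation subgroup $\Lambda \cong \mathbb{Z}^2$, and since the two halves $T$ and $-T$ assemble (up to translation by $w$) into the single lattice tile $T^{\ell}$ that tiles under $\mathbb{Z}^2$, the combinatorics of translational contact among the $T$-pieces is governed by the contacts in the lattice tiling of $T^{\ell}$. Concretely, if a pure translate $T + z$ ($z\in\mathbb{Z}^2$) meets $T$, then $T+z \subset (T^{\ell}-w) + (\text{something})$, and this forces $z$ to correspond to a lattice neighbor of $T^{\ell}$, i.e. the associated translation lies in $\mathcal{S}^{\ell}$. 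So the purely translational neighbors of $T$ are contained in $\mathcal{S}^{\ell}$, which gives the first block of the claimed inclusion.

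Next I would treat the rotational part $r=1$, i.e. isometries of the form $s\circ c$. Here I would use that $-T$ is itself one of the pieces of $T^{\ell}$ (shifted by $w$), so contacts between $T$ and a rotated copy $a^p b^q c(T) = a^p b^q(-T)$ are again controlled by contacts between the two half-pieces inside the lattice tiling. A copy $-T$ of the second half abutting $T$ corresponds either to the adjacency of the two halves inside a single $T^{\ell}$ (accounting for the isolated element $c$, since $T$ and $-T$ share the common ``center'' $w$) or to a half-piece of a neighboring lattice tile $T^{\ell}+d$ touching $T$, which again pins $d$ to a lattice neighbor and puts the isometry into $\mathcal{S}^{\ell}c$. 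Collecting the two cases yields $\mathcal{S} \subseteq \mathcal{S}^{\ell} \cup \{c\} \cup \mathcal{S}^{\ell}c$.

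The main obstacle will be bookkeeping the correspondence between the abstract group elements $a^p b^q c^r \in \Gamma$ and the concrete Euclidean contacts of the pieces $T, -T$ inside the copies $T^{\ell}+d$ of the lattice tile. One has to be careful that the decomposition $T^{\ell} = T\cup(-T)+w$ is an overlap only on a measure-zero set, and that the translation $w$ is generally not an integer vector, so the identification of which integer translation $z$ (resp. which $s\circ c$) can produce a contact requires tracking how $w$ interacts with the lattice $\mathbb{Z}^2$ and with the action of $c$. I expect this to reduce, after the geometric reasoning above, to verifying that every contact of $T$ with another tile of the $p2$-tiling projects onto a contact of $T^{\ell}$ with either $T^{\ell}$ itself or a lattice-neighbor $T^{\ell}+d$, and that the only genuinely new element not coming from $\mathcal{S}^{\ell}$ or $\mathcal{S}^{\ell}c$ is the single rotation $c$ gluing the two halves of the central tile. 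This gives an inclusion (``approximation'') only, which is exactly what the lemma claims; sharpening it to the exact neighbor set is deferred to the later sections.
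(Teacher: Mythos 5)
Your proposal is correct and takes essentially the same route as the paper: both arguments rest on Lemma~\ref{twotiles} (that $T^{\ell}$ is a translate of $T\cup(-T)$), so that any contact of $T$ with $\gamma(T)=\pm T+(p,q)$ projects to a contact of $T^{\ell}$ with $T^{\ell}+(p,q)$, forcing $(p,q)=0$ (which yields the isolated element $c$) or $(p,q)\in\mathcal{S}^{\ell}$ (which yields $\mathcal{S}^{\ell}$ resp. $\mathcal{S}^{\ell}c$). The bookkeeping you worry about concerning the non-integer offset $w$ is harmless, since the same $w$ shifts both copies of $T^{\ell}$ and cancels in the intersection condition; the paper treats the whole argument as immediate in two lines.
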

\begin{proof}
Using Lemma \ref{twotiles}, we know that the  lattice tile is a translation of the union $T\cup c(T)$. Then it is easy to see that all possible elements of the neighbor set of $T$ are included in the union of the neighbor set of $T^{\ell}$, the $\pi$-rotation of the neighbor set of $T^{\ell}$ and the $\pi$-rotation itself. 
\end{proof}

From the above lemma, we know an upper bound for the number of neighbors of the $p2$-tile $T$.  The following proposition from~\cite{LoridantLuo09} gives a lower bound for this number.

\begin{proposition}\label{num_p2}
In a lattice tiling or a $p2$-tiling of the plane, each tile has at least six neighbors.
\end{proposition}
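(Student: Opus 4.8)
The plan is to derive the bound from an Euler characteristic count carried out in the compact quotient of the tiling, proving the lattice case in full and reducing the $p2$-case to it.

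I would begin by recording the two structural facts that hold in both cases: the neighbor set is symmetric, $\mathcal{S}=\mathcal{S}^{-1}$, and it generates $\Gamma$. In particular, in a lattice tiling $\mathcal{S}$ contains no involutions, so $|\mathcal{S}|$ is even, and the same symmetry shows that the vertex neighbors (those meeting $T$ in a single point) also come in pairs $\{s,s^{-1}\}$. For the $p2$-case I would pass to the index-two translation subgroup $\Lambda=\langle a,b\rangle\cong\mathbb{Z}^2$ and use that $U:=T\cup c(T)$ is a fundamental domain for the lattice tiling $\{\lambda(U):\lambda\in\Lambda\}$; every neighbor of $T$ under $\Gamma$ then appears as a contact of $U$ with a $\Lambda$-translate, possibly composed with the central rotation $c$, so that a lower bound for lattice tilings transfers after bookkeeping of how $c$ distributes the contacts of $U$ between $T$ and $c(T)$. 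Here the $c$-symmetry, which exchanges $T$ and $c(T)$ and is an automorphism of the whole tiling, guarantees that both halves receive the same number of neighbors.

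For the lattice core let $X=\mathbb{R}^2/\Lambda$ be the quotient torus, so $\chi(X)=0$. Assuming for the moment that $T$ is disk-like, the tiling descends to a cell decomposition of $X$ with exactly one face (the image of $T^{\circ}$), with $E$ edges, one per antipodal pair $\{z,-z\}$ of adjacent neighbors, and with $V$ vertices, one per $\Lambda$-class of points where three or more tiles meet. The key elementary remark is that any tiles sharing a common point are pairwise neighbors; hence a tiling vertex of degree $d\geq 3$ turns the $d$ tiles around it into mutual neighbors. From $\chi(X)=V-E+1=0$, the handshake identity $\sum_v \deg(v)=2E$, and $\deg(v)\geq 3$, I obtain $2E\geq 3(E-1)$, whence $E\in\{2,3\}$. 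If $E=3$ then the number of adjacent neighbors is $2E=6$ and we are done. If $E=2$ then $V=1$ and the unique vertex has degree four; the four tiles meeting there produce, besides the four adjacent neighbors, at least one vertex neighbor, and by the symmetry of $\mathcal{S}$ at least two, giving $|\mathcal{S}|\geq 4+2=6$. The extremal value six is realised by the hexagonal and brick tilings, which serve as a check on the count.

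The main obstacle is that the tiles in this paper need not be disk-like, which is precisely the regime of interest. When $T^{\circ}$ is not a topological disk, its image in $X$ is a single connected open set but not a cell, so the value $F=1$ in Euler's formula is no longer legitimate, and $\partial T$ need not be a simple closed polygon with finitely many well-behaved corners. I would address this by working only with the data guaranteed by the hypotheses ($\overline{T^{\circ}}=T$, compactness, local finiteness): the contact sets $T\cap(T+z)$ remain closed and the set of tiling vertices remains discrete, so one may replace the single-face count by the Euler characteristic of the image of $T^{\circ}$ in $X$ and argue that disconnected or pinched contacts can only raise the neighbor count, never lower it below six. Carrying out this refinement rigorously—showing that non-disk-likeness cannot destroy contacts—is the delicate point, and is where I expect the argument of~\cite{LoridantLuo09} to be indispensable; the analogous computation for $p2$ would be performed on its quotient orbifold, the sphere with four cone points of order two (again of Euler characteristic zero), with the involutive neighbors $a^pb^qc$ tracked through the cone points.
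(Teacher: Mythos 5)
The decisive gap is the one you concede yourself: your Euler-characteristic count is performed under the extra hypothesis that $T$ is disk-like, and the removal of that hypothesis --- which you rightly call the delicate point --- is deferred to the argument of~\cite{LoridantLuo09}. That, however, is the entire content of Proposition~\ref{num_p2}. In this paper the proposition is invoked (Lemma~\ref{estimate_neighbor}, Theorem~\ref{neigh_gra_S}) at a stage where no topological information on $T$ is yet available, and for many parameters $T$ turns out \emph{not} to be disk-like; the statement must hold for an arbitrary tile of an arbitrary locally finite lattice or $p2$-tiling. The paper itself gives no proof but quotes the result from~\cite{LoridantLuo09}, so your proposal does not furnish an independent argument: it reduces the essential case to exactly the same citation and proves only a special case --- and even that special case incompletely, as follows.

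Even assuming disk-likeness, two counting steps fail. First, you posit ``one edge per antipodal pair $\{z,-z\}$ of adjacent neighbors'', but $T\cap(T+z)$ need not be connected, and each pair contributes as many edges of the quotient complex as this intersection has components. Euler's formula gives $E\le 3$, but this only yields $\sharp\{\text{adjacent pairs}\}\le E$, which is an \emph{upper} bound --- the wrong direction for your purpose. With $E=3$ one could a priori have a single adjacent pair whose intersection has three components, so the inference ``$E=3$ implies $2E=6$ adjacent neighbors'' is unjustified. Second, in the case $E=2$ you claim the fourth tile around the degree-four vertex is a vertex neighbor \emph{in addition to} the four adjacent neighbors; nothing excludes that this tile also shares an edge with $T$ elsewhere and already is one of them, in which case your count stalls at four. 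Finally, the $p2$-reduction is lossy: a contact $U\cap(U+s)\neq\emptyset$ of $U=T\cup c(T)$ may be realized solely by $T\cap(c(T)+s)\neq\emptyset$, which produces for the whole pair $\{s,-s\}$ just one $\Gamma$-neighbor of $T$, namely the involution $x\mapsto -x+s$ (the relation $c(T)\cap(T-s)\neq\emptyset$ gives this same neighbor again). Thus six lattice neighbors of $U$ can transfer to as few as three neighbors of $T$, and the $c$-symmetry you invoke only shows that $T$ and $c(T)$ have equally many neighbors, not that this common number is at least six.
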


Let us now give the definition of the neighbor graph.
\begin{definition}(\cite{LoridantLuoThuswaldner07})\label{graph}
 Let $\Gamma$ be a group of isometries on $\mathbb{R}^2$, let $g$ be an expanding map and let $\mathcal{D}$ be a digit set associated with a given tile. For $\Omega\subset \Gamma$ we define the graph $G(\Omega)$ as follows. The states of $G(\Omega)$ are the elements of $\Omega$, and there is an edge 
$$\gamma\xrightarrow{\delta|\delta'}\gamma'\quad\text{  iff  }\delta^{-1}g\gamma g^{-1}\delta'=\gamma'  \text{ with }\gamma,\gamma' \in \Omega \text{ and } \delta,\delta' \in \mathcal{D}.$$
\end{definition}

The \emph{neighbor graph} $G(\mathcal{S})$ is very important in the present paper.

Recall that the neighbor set of $T$ is defined by $\mathcal{S}=\{\gamma\in\Gamma\setminus\{id\};~~T\cap\gamma(T)\neq\emptyset\}.$ Set $B_{\gamma}=T\cap\gamma(T)$ for $\gamma \in \Gamma$. The nonoverlapping property yields for the boundary of $T$ that $\partial T=\bigcup_{\gamma\in\mathcal{S}}B_{\gamma}$. 
Moreover using the above notation, the sets $B_{\gamma}$ satisfy the set equation (\cite{LoridantLuoThuswaldner07})
$$B_{\gamma}=\bigcup_{
\begin{matrix}
\delta\in \mathcal{D},\gamma'\in\mathcal{S}, \\
\exists~~\delta'\in\mathcal{D},\gamma\xrightarrow{\delta|\delta'}\gamma'\in G(\mathcal{S})\\
\end{matrix}}g^{-1}\delta(B_{\gamma'}).$$

 The following characterization is from \cite{LoridantLuoThuswaldner07}.

\begin{characterization}\label{B_r}
Let $t$ be a point in $\mathbb{R}^n$, $(\delta_j)_{j\in\mathbb{N}}\in\mathcal{D}^{\mathbb{N}}$ and $\gamma\in\mathcal{S}$. Then the following assertions are equivalent.
\begin{itemize}
\item $x=\lim_{n\rightarrow\infty}g^{-1}\delta_1\dots g^{-1}\delta_n(t)\in B_{\gamma}$.
\item There is an infinite walk in $G(\mathcal{S})$ of the shape
$$\gamma\xrightarrow{\delta_1|\delta_1'}\gamma_1\xrightarrow{\delta_2|\delta_2'}\gamma_2\xrightarrow{\delta_3|\delta_3'}\dots$$ for some $\gamma_i\in\mathcal{S}$ and $\delta'_i\in\mathcal{D}$.
\end{itemize}  
\end{characterization}

This means that for each $\gamma\in \mathcal{S}$, there is at least one infinite walk in $G(\mathcal{S})$ starting from the state $\gamma$.
We use this information to refine the estimate of the neighbor set of $T$ (compare with Lemma~\ref{neigh_crystile}).

\begin{lemma}\label{estimate_neighbor}
Let $\mathcal{S}$ be the neighbor set of the tile $T$ with respect to $(p2,g,\mathcal{D})$. Let $\mathcal{S}'=\mathcal{S}^{\ell}\cup\{c\}\cup{\mathcal{S}^{\ell}c}$. Then the following statements hold.
\begin{itemize} 
\item[(1)] For $A>0$, $\mathcal{S}\subset\mathcal{S}'\setminus\{ a^Ab,a^{-A}b^{-1},a^{-A}b^{-1}c \}$;
\item[(2)] For $A=-1$, $\mathcal{S}\subset\mathcal{S}'\setminus\{ a^{-1}b, ~b, ~ab^{-1}, ~b^{-1}, ~ab^{-1}c,~b^{-1}c \}$;
\item[(3)] For $A=0$, $\mathcal{S}\subset\mathcal{S}'\setminus\{ ab, a^{-1}b^{-1}, ~ab^{-1}, a^{-1}b, ~b,  ~b^{-1}, ~a^{-1}b^{-1}c,~ab^{-1}c,~b^{-1}c\}.$
\end{itemize}
In particular, $\mathcal{S}$ has at least $6$ but not more than $10$ elements.
\end{lemma}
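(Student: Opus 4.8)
The plan is to upgrade the coarse inclusion $\mathcal{S}\subset\mathcal{S}'$ of Lemma~\ref{neigh_crystile} by means of the walk criterion in Characterization~\ref{B_r}. The key observation is that, since $\mathcal{S}\subset\mathcal{S}'$, every edge of the true neighbor graph $G(\mathcal{S})$ is also an edge of the larger graph $G(\mathcal{S}')$ built on the full candidate set; hence any infinite walk in $G(\mathcal{S})$ is a fortiori an infinite walk in $G(\mathcal{S}')$. By Characterization~\ref{B_r}, each $\gamma\in\mathcal{S}$ is the origin of an infinite walk in $G(\mathcal{S})$, so the contrapositive reads: if a candidate $\gamma\in\mathcal{S}'$ originates no infinite walk in $G(\mathcal{S}')$, then $\gamma\notin\mathcal{S}$. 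The proof thus reduces to constructing $G(\mathcal{S}')$ explicitly and locating the states from which no infinite walk emanates.

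To assemble $G(\mathcal{S}')$, I would first make the conjugation $\gamma\mapsto g\gamma g^{-1}$ explicit. Writing a general element of $\Gamma$ as $\gamma(x)=(-1)^r x+w$ with $w\in\mathbb{Z}^2$, $r\in\{0,1\}$, and using $g$ from~\eqref{expand-map}, a direct computation gives $g\gamma g^{-1}(x)=(-1)^r x+Mw+(1-(-1)^r)t$, where $t$ is the translation part of $g$; in particular $g\gamma g^{-1}$ is the translation by $Mw$ when $r=0$, and $gcg^{-1}=a^{B-1}c$. With the digit set $\mathcal{D}=\{id,a,\dots,a^{B-2},c\}$ spelled out, the defining relation $\delta^{-1}g\gamma g^{-1}\delta'=\gamma'$ of Definition~\ref{graph} turns into a finite family of affine identities in the parameters $A,B$. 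Feeding in the explicit lists of $\mathcal{S}^{\ell}$ from Proposition~\ref{neigh_lattice}, I would tabulate, for each candidate $\gamma\in\mathcal{S}'$ and each pair $(\delta,\delta')\in\mathcal{D}^2$, whether $\delta^{-1}g\gamma g^{-1}\delta'$ again lies in $\mathcal{S}'$, thereby recording all outgoing edges of every state.

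The elimination is then a standard pruning of the finite directed graph $G(\mathcal{S}')$: a state is the origin of an infinite walk if and only if it can reach a cycle, so one iteratively discards every state all of whose outgoing edges (if any) lead only to already-discarded states. I expect that in each of the three regimes $A>0$, $A=-1$, $A=0$ this pruning deletes precisely the elements listed in items (1), (2), (3). For the cardinality count, Proposition~\ref{neigh_lattice} gives $\sharp\mathcal{S}^{\ell}=6$ for $A\neq0$ and $\sharp\mathcal{S}^{\ell}=8$ for $A=0$, so $\sharp\mathcal{S}'=13$ or $17$; subtracting the discarded states leaves at most $10$ surviving candidates (the maximum being reached when $A>0$), which gives the upper bound, while the lower bound $\sharp\mathcal{S}\geq6$ is exactly Proposition~\ref{num_p2}.

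The main obstacle will be carrying out the construction of $G(\mathcal{S}')$ uniformly over the whole infinite family rather than for isolated numerical values: the edge computations depend symbolically on $A$ and $B$, so one must check that membership of each image $\delta^{-1}g\gamma g^{-1}\delta'$ in $\mathcal{S}'$ is valid throughout the admissible range $A\geq-1$, $2A<B+3$. The small or degenerate parameters---$A\in\{-1,0\}$, and small $B$, where either the digit set or the shape of $\mathcal{S}^{\ell}$ changes---must be treated separately, and this is precisely what forces the three-way split of the statement.
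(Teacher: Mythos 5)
Your proposal is correct and follows essentially the same route as the paper: both use the subgraph relation $G(\mathcal{S})\subset G(\mathcal{S}')$ together with Characterization~\ref{B_r} in contrapositive form, compute the conjugates $g\gamma g^{-1}$ explicitly, and then discard candidate states from which no infinite walk emanates, with the cardinality bounds coming from $\sharp\mathcal{S}'$ and Proposition~\ref{num_p2}. The only simplification you miss is that no iterative reachability pruning is needed here: the paper checks that the listed states have no outgoing edges in $G(\mathcal{S}')$ at all (e.g.\ every $\delta^{-1}ga^Abg^{-1}\delta'$ lands outside $\mathcal{S}'$), which immediately yields the inclusion---and note that full pruning would in some sub-regimes delete even more states than those listed, a refinement the paper defers to Theorem~\ref{neigh_gra_S}.
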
 
\begin{proof}
We know that $G(\mathcal{S})$ is a subgraph of  $G(\mathcal{S}')$ by Lemma \ref{neigh_crystile}.
The definition of the edges requires to calculate $g\mathcal{S}'g^{-1}=\{g\gamma g^{-1};~~\gamma\in\mathcal{S}'\}$ at first.
Let $p\text{ and }q$ be arbitrary elements in $\mathbb{Z}$. Recall that $g$ has the form  \eqref{expand-map}. Then
\begin{equation}\label{ab}
g a^p b^q g^{-1} \left(
\begin{matrix}
x\\
y\\
\end{matrix}
\right)=
\left(
\begin{matrix}
x\\
y\\
\end{matrix}
\right)+
\left(
\begin{matrix}
-qB\\
p-qA\\
\end{matrix}
\right),
\end{equation}

\begin{equation}\label{abc}
g a^p b^q cg^{-1} \left(
\begin{matrix}
x\\
y\\
\end{matrix}
\right)=
-\left(
\begin{matrix}
x\\
y\\
\end{matrix}
\right)+
\left(
\begin{matrix}
(1-q)B-1\\
p-qA\\
\end{matrix}
\right).
\end{equation}
Thus the following relations hold:
$$ga^Abg^{-1}=a^{-B},\quad ga^{-A}b^{-1}g^{-1}=a^{B},\quad ga^{-A}b^{-1}cg^{-1}=a^{2B-1}c.$$
We claim that there are no edges starting from the states $a^Ab,~~a^{-A}b^{-1},~~\text{and }a^{-A}b^{-1}c.$

Indeed, for $\delta,\delta'\in \mathcal{D}$, 
$$\delta^{-1}ga^Abg^{-1}\delta'=\delta^{-1}a^{-B}\delta'= \left\lbrace
\begin{matrix}
a^{-B}~~,&\delta=\delta'=id;~~\\
a^B~~~~~~,&\delta=\delta'=c;\quad\\
a^{-B}c~~,&\delta=id,~\delta'=c;\\
a^{B}c~~,&\delta=c,~\delta'=id;\\
a^{B-p+q},&\quad\quad\delta=a^p,~\delta'=a^q, ~1\leq p,q\leq B-2.\\
\end{matrix}
\right.$$
Therefore, $\delta^{-1}ga^Abg^{-1}\delta'$ is not an element of $\mathcal{S}'$, which means that there is no edge starting from $a^Ab$. The computation is  similar for $a^{-A}b^{-1},~~a^{-A}b^{-1}c$.
Hence, we obtain that $a^Ab,~~a^{-A}b^{-1},~~a^{-A}b^{-1}c$ are not elements of $\mathcal{S}$ by Characterization \ref{B_r}, which proves Item $(1)$.

For $A=-1$, by~\eqref{ab} and \eqref{abc} we know that

$\begin{matrix}
ga^{-1}bg^{-1}=a^{-B},\quad gbg^{-1}=a^{-B}b,\quad gab^{-1}g^{-1}=a^{B},\quad\quad\quad\\
\quad\quad\quad gb^{-1}g^{-1}=a^Bb^{-1},\quad gab^{-1}cg^{-1}=a^{2B-1}c,\quad gb^{-1}cg^{-1}=a^{2B-1}b^{-1}c.\\
\end{matrix} $\\
Similar computations as above show that   there is no edge starting from the states removed from $\mathcal{S}'$ in Item (2).

For $A=0$, we can also show that there is no edge starting from the states removed from $\mathcal{S}'$ in Item (3), since

$\begin{matrix}
ga^{-1}bg^{-1}=a^{-B}b^{-1},\quad gbg^{-1}=a^{-B},\quad gab^{-1}g^{-1}=a^{B}b,\quad\quad\quad\\
\quad\quad\quad gb^{-1}g^{-1}=a^B,\quad gab^{-1}cg^{-1}=a^{2B-1}bc,\quad gb^{-1}cg^{-1}=a^{2B-1}c,\\
\quad\quad\quad\quad gabg^{-1}=a^{-B}b,\quad ga^{-1}b^{-1}g^{-1}=a^Bb^{-1},\quad ga^{-1}b^{-1}cg^{-1}=a^{2B-1}b^{-1}c.\\
\end{matrix}$

Finally, by Proposition~\ref{num_p2} and the above discussion, we obtain   that the neighbor set of the crystile has at least $6$ but not more than $10$ elements because $\sharp\mathcal{S}'=13$ by Lemma \ref{neigh_crystile}.
\end{proof}

\section{The neighbor graph of $T$ for $A\geq -1$ and $2A<B+3$}\label{sec_neigh2}
In this section, we explicitly construct the neighbor graph. Throughout the whole section, we restrict to the case $A\geq -1$ and $2A<B+3$.
In Lemma \ref{estimate_neighbor}, we denoted by $\mathcal{S}'$ the set $\mathcal{S}'=\mathcal{S}^{\ell}\cup\{c\}\cup\mathcal{S}^{\ell}c$.  Now for $A>0$, let
$\mathcal{S}''=\mathcal{S}'\setminus\{ a^Ab,a^{-A}b^{-1},a^{-A}b^{-1}c \}$,
that is,
\begin{equation}\label{Pseudo-neigh}
\mathcal{S}''=\{a^{A-1}b,a,a^{-1},b^{-1},a^{1-A}b^{-1},c,a^Abc,a^{A-1}bc,ac,a^{-1}c,a^{1-A}b^{-1}c\}.
\end{equation}
For $A=0$, we set
\begin{equation}\label{Pseudo-neigh0}\mathcal{S}''=\{a, a^{-1}, c, ac, a^{-1}c, a^{-1}bc, bc,abc\},
\end{equation}
and for $A=-1$, 
\begin{equation}\label{Pseudo-neigh-1}\mathcal{S}''=\{a, a^{-1}, c, ac, a^{-1}c, a^{-1}bc, bc\}.
\end{equation}
 By Lemma \ref{neigh_crystile}, we know that $\mathcal{S}\subset\mathcal{S}''$.  
We call the graph $G(\mathcal{S}'')$ the \emph{pseudo-neighbor graph}. Tables $1, 2 \text{ and } 3$ show all information on $G(\mathcal{S}'')$. The last column indicates the parameters $A, B$ for which these edges exist. Furthermore, the pseudo-neighbor graphs for the cases $A\geq 3, B\geq 5$ are depicted in Figure \ref{Uni-graph}. The edges named by $(1),\dots, (13)$ are listed in Tables $1, 2 \text{ and } 3$. 

\begin{figure}[h]
\includegraphics[height=2.3 in]{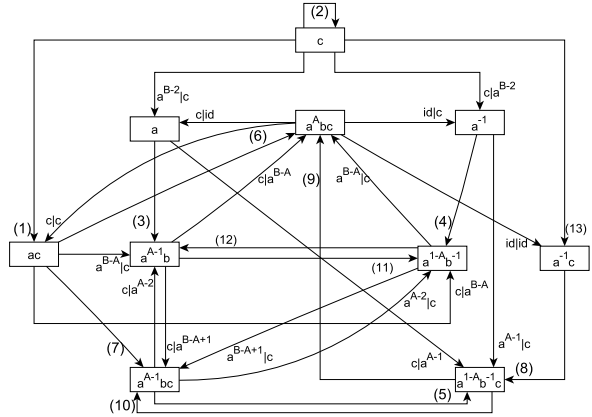}
\caption{The graph $G(\mathcal{S}'')$ for $A\geq 3$ and $B\geq 5$ and $2A<B+3$.}\label{Uni-graph}
\end{figure}
\begin{figure}[h] 
\subfigure[Proposition \ref{neigh_gra_S}, case $A=2,B=2$] { \includegraphics[width=2 in]{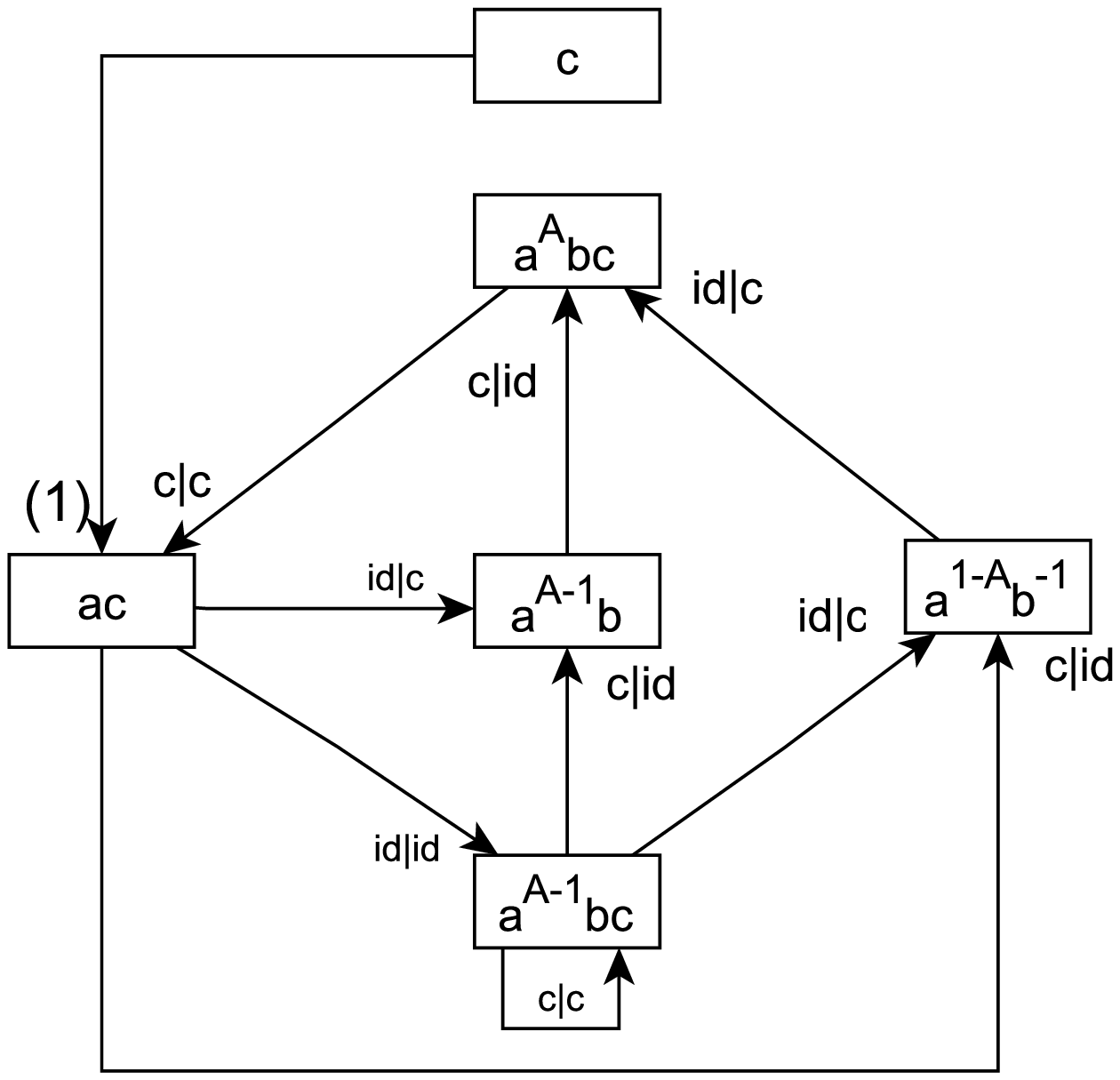}  \label{fig_secondsub} } \quad
\subfigure[Proposition \ref{neigh_gra_S}, case $A=2,B\geq3$]{ \includegraphics[width=2.5 in]{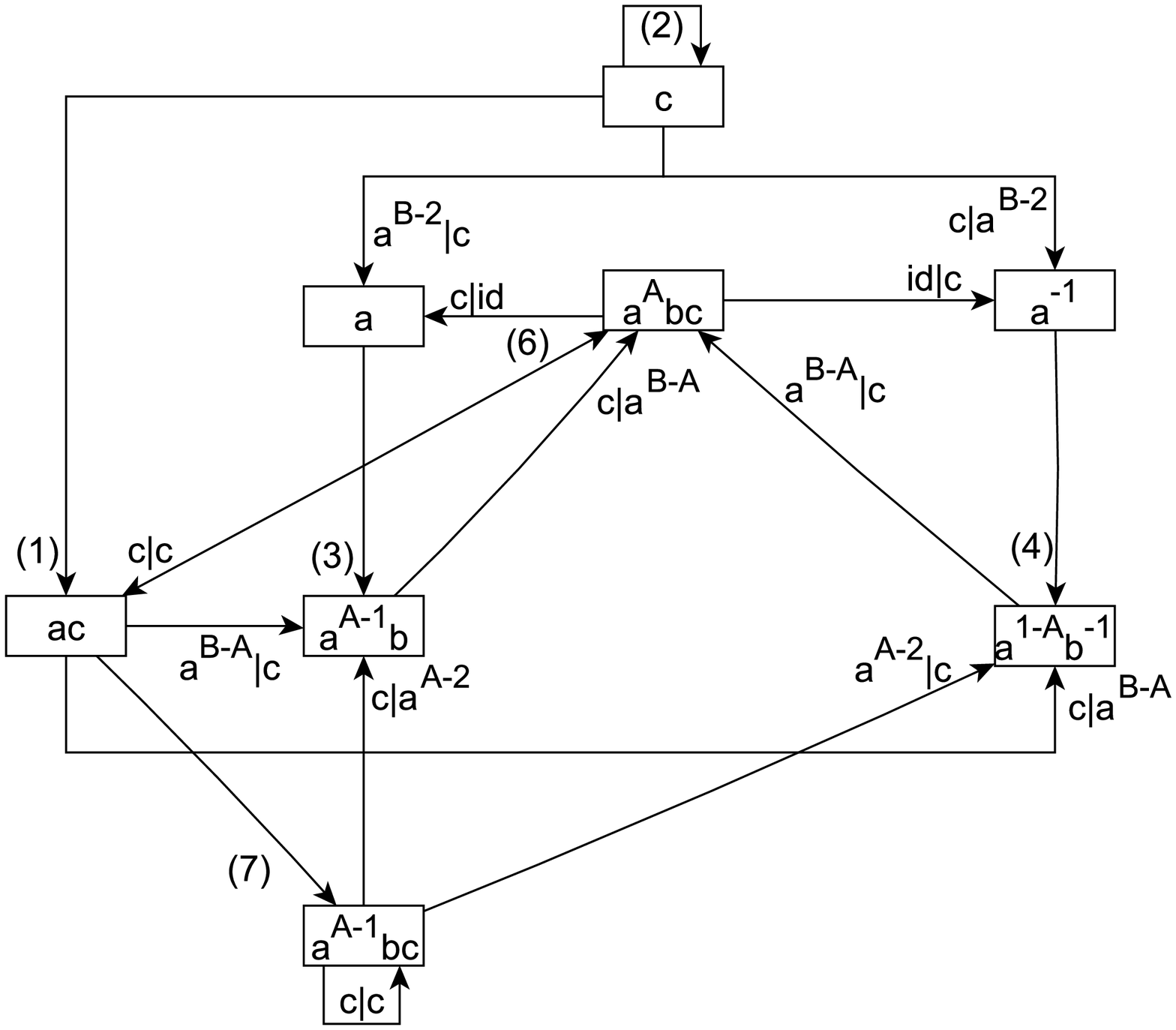} \label{fig_thirdsub} } 
\caption{The neighbor graph $G(\mathcal{S})$ of $T$}
\label{fig_subfigures2}

 \end{figure}

\begin{tabular}{|c|c|c|c|c|}
\hline 
\Small Edge & Labels & $\quad$ & Name & Condition \\ 
\hline
$c\rightarrow ac$ & 
$\quad\quad\begin{matrix}
\\ 
 a^{B-2}\\ 
a^{B-3} \\ 
\dots \\ 
id\\
\end{matrix}$ & 
$\begin{matrix}
\\
id \\ 
a\\ 
\dots \\ 
a^{B-2}\\
\end{matrix}\quad\quad$ & $(1)$ & $B\geq 2$ and $A\geq -1$\\ 
\hline  
$c\rightarrow a^{-1}c$ &
 $\quad\quad\begin{matrix}
\\
 a^{B-2}\\ 
 a^{B-3}\\ 
\dots \\
a^{2}\\
 \end{matrix} $& $\begin{matrix}
\\
 a^2\\ 
 a^3\\
\dots \\ 
a^{B-2}\\
 \end{matrix}\quad\quad$ & $(13)$ & $B\geq 4$ and  $A\geq-1$\\
\hline 
$c\rightarrow c$ &
 $\quad\quad\begin{matrix}
\\
 a^{B-2}\\ 
a^{B-3} \\ 
\dots \\
a\\
 \end{matrix} $& $\begin{matrix}
\\
 a\\ 
 a^2\\
\dots \\ 
a^{B-2}\\
 \end{matrix}\quad\quad$ & $(2)$ & $B\geq 3$ and $A\geq -1$\\
\hline
\vspace{-2mm}
 &&&&\\
 $c\rightarrow a^{-1}$& $c$&$a^{B-2}$&$\quad$&$B\geq 2,A\geq -1$ \\
  \hline
\vspace{-2mm}
 &&&&\\
 $c\rightarrow a$& $a^{B-2}$&$c$&$\quad$&$B\geq 2,A\geq -1$ \\

\hline 
$a\rightarrow a^{A-1}b$ &
 $\quad\quad\begin{matrix}
 \\
 id\\ 
a \\ 
\dots \\
a^{B-A-1}\\
 \end{matrix}$&$\begin{matrix}
\\
 a^{A-1}\\ 
 a^A\\
\dots \\ 
a^{B-2}\\
\end{matrix}\quad\quad$ & $(3)$ & 
 $\begin{matrix}
 B\geq 2,A\geq1\\
 \text{ and } (A,B)\neq(2,2)\\
\end{matrix}  $ \\
 \hline
 $a\rightarrow a^{1-A}b^{-1}c$ &$\quad\quad c$& $a^{A-1}\quad\quad$&$ \quad$&  $\begin{matrix}
 B\geq 2,A\geq1\\
 \text{ and } (A,B)\neq(2,2)\\
\end{matrix}  $ \\
 \hline
 $a\rightarrow bc$ &$\quad\quad id$& $c\quad\quad$&$\quad$ &  $B\geq 2,A\in\{-1,~0,~1\}$ \\
 \hline
 \vspace{-2mm}
 &&&&\\
 $a\rightarrow a^{-1}bc$ &$\quad\quad a$& $c\quad\quad$&$\quad$ &  $B\geq 3, A\in\{0,-1\}$ \\
 \hline
 \vspace{-2mm}
 &&&&\\
 $a^{-1}\rightarrow a^{-1}bc$ &$\quad\quad c$& $a\quad\quad$&$\quad$ &  $B\geq 3, A\in\{0,-1\}$ \\
 \hline
 \vspace{-2mm}
 &&&&\\
 $a^{-1}\rightarrow bc$ &$\quad\quad c$& $id\quad\quad$&$\quad$ &  $B\geq 3,A\in\{-1,~0,~1\}$\\
\hline
  \end{tabular}
  
  {\center{\quad\quad\quad\quad\bf{ Table 1. Edges of $G(\mathcal{S}'')$ (case $A\geq -1 \text{ and }2A<B+3$)}}

}
  
\begin{tabular}{|c|c|c|c|c|}
\hline 
Edge & Labels & $\quad$ & Name & Condition \\

\hline 
$a^{-1}\rightarrow a^{1-A}b^{-1}$ &
 $\quad\quad\begin{matrix}
 \\
 a^{A-1}\\ 
 a^{A}\\ 
\dots \\
a^{B-2}\\
 \end{matrix}$& $\begin{matrix}
 \\
 id\\ 
 a\\
\dots \\ 
a^{B-A+1}\\
 \end{matrix}\quad\quad$ & $(4)$ &$\begin{matrix}
 B\geq 2,A\geq1\\
 \text{ and } (A,B)\neq(2,2)\\
\end{matrix} $ \\
\hline
 $a^{-1}\rightarrow a^{1-A}b^{-1}c$ &$a^{A-1}$& $c\quad\quad$&\quad &  $\begin{matrix}
 B\geq 2,A\geq1\\
 \text{ and } (A,B)\neq(2,2)\\
\end{matrix}  $ \\
\hline
\vspace{-2mm}
 &&&&\\
$abc\rightarrow a^{-1}bc$&$id$&$id$&$\quad$ &$B\geq 2, A=0$\\
\hline 
$a^{A-1}bc\rightarrow a^{1-A}b^{-1}c$ &
 $\quad\quad\begin{matrix}
 \\
 a^{A-2}\\ 
 a^{A-3}\\ 
\dots \\
id\\
 \end{matrix} $& $\begin{matrix}
 \\
 id\\ 
 a\\
\dots \\ 
a^{A-2}\\
 \end{matrix}\quad\quad$ & $(5)$ &$B\geq2$ and $A\geq2$\\  
\hline
\vspace{-2mm}
 &&&&\\
$a^{A-1}bc\rightarrow abc$&$c$&$c$&$\quad$ &$B\geq 2,A\in\{-1,~0,~1\}$\\
  \hline 
  \vspace{-2mm}
 &&&&\\
  $a^{A-1}bc\rightarrow a^{A-1}b$&$c$&$a^{A-2}$&$\quad$ &$B\geq 2, A\geq2$\\
  \hline
  \vspace{-2mm}
 &&&&\\
    $a^{A-1}bc\rightarrow a^{A-1}b$&$c$&$a^{A-2}$&$\quad$ &$B\geq 2, A\geq2$\\
\hline
\vspace{-2mm}
 &&&&\\
  $a^{A-1}bc\rightarrow a^{1-A}b^{-1}$&$a^{A-2}$&$c$&$\quad$ &$B\geq 2, A\geq2$\\
   \hline
$ac\rightarrow a^{A}bc$ &
 $\quad\quad\begin{matrix}
 \\
 a^{B-A-1}\\ 
 a^{B-A-2}\\ 
\dots \\
id\\
 \end{matrix} $& $\begin{matrix}
 \\
 id\\ 
 a\\
\dots \\ 
a^{B-A+1}\\
 \end{matrix}\quad\quad$ & $(6)$ & $\begin{matrix}
 B\geq 2,A\geq1\\
 \text{ and } (A,B)\neq(2,2)\\
\end{matrix}  $ \\
   \hline
$ac\rightarrow a^{-1}bc$ &
 $\quad\quad\begin{matrix}
 \\
 a^{B-2}\\ 
 a^{B-3}\\ 
\dots \\
id\\
 \end{matrix} $& $\begin{matrix}
 \\
 a^2\\ 
 a^3\\
\dots \\ 
a^{B-2}\\
 \end{matrix}\quad\quad$ & $(6)'$ & $
 B\geq 4,A\in\{0,-1\}$ \\
    \hline
$ac\rightarrow abc$ &
 $\quad\quad\begin{matrix}
 \\
 a^{B-2}\\ 
 a^{B-3}\\ 
\dots \\
id\\
 \end{matrix} $& $\begin{matrix}
 \\
 id\\ 
 a\\
\dots \\ 
a^{B-2}\\
 \end{matrix}\quad\quad$ & $(14)$ & $
 B\geq 2, A=0$ \\
\hline 
$ac\rightarrow a^{A-1}bc$ &
 $\quad\quad\begin{matrix}
 \\
 a^{B-A}\\ 
 a^{B-A-1}\\ 
\dots \\
id\\
 \end{matrix} $& $\begin{matrix}
 \\
 id\\ 
 a\\
\dots \\ 
a^{B-A}\\
 \end{matrix}\quad\quad$ & $(7)$ & $B\geq 2$ and  $A\geq 2$\\

  \hline
    $ac\rightarrow bc$&$
  \quad\quad\begin{matrix}
  \\
  a^{B-2}\\
  a^{B-3}\\
  \dots\\
  a\\
  
\end{matrix} $&$
\begin{matrix}
\\
a\\
a^2\\
\dots\\
a^{B-2}\\
\end{matrix}\quad\quad$&$(7)'$&$B\geq 3, A\in\{-1,~0,~1\}$\\
\hline  
\vspace{-2mm}
 &&&&\\
$ac\rightarrow a^{A-1}b$&$a^{B-A}$&$c$&$\quad$ &$B\geq 2, A\geq 2$\\
  \hline
  \vspace{-2mm}
 &&&&\\  
  $ac\rightarrow a^{1-A}b^{-1}$&$c$&$a^{B-A}$&$\quad$ &$B\geq 2, A\geq2$\\
\hline
\vspace{-2mm}
 &&&&\\
  $a^Abc\rightarrow a^{-1}c$&$id$&$id$&$\quad$ &$B\geq 2, A\geq-1$\\

\hline 
\end{tabular} 
 {\center{\quad\quad\bf{Table 2. Edges of $G(\mathcal{S}'')$ (Case $A\geq -1 \text{ and }2A<B+3$)}}
  
}

\begin{tabular}{|c|c|c|c|c|}
\hline 
Edge & Labels & $\quad$ & Name & Condition \\

  \hline
  \vspace{-2mm}
 &&&&\\
  $a^Abc\rightarrow a$&$c$&$id$&$\quad$ &$B\geq 2, A\geq -1$\\
  \hline
  \vspace{-2mm}
 &&&&\\
  $a^Abc\rightarrow a^{-1}$&$id$&$c$&$\quad$ &$B\geq 2, A\geq -1$\\
  \hline
  \vspace{-2mm}
 &&&&\\
  $a^Abc\rightarrow ac$&$c$&$c$&$\quad$ &$B\geq 2, A\geq -1$\\
  \hline
  \vspace{-2mm}
 &&&&\\
  $bc\rightarrow a^{-1}bc$&$id$&$id$&$\quad$ &$B\geq 2, A=-1$\\
 \hline
$a^{-1}c\rightarrow a^{1-A}b^{-1}c$ &
 $\quad\quad\begin{matrix}
 \\
 a^{B-2}\\ 
 a^{B-3}\\ 
\dots \\
a^{A}\\
 \end{matrix} $& $\begin{matrix}
 \\
 a^A\\ 
 a^{A+1}\\
\dots \\ 
a^{B-2}\\
 \end{matrix}\quad\quad$ & $(8)$ & $B\geq A+2, A>0$\\  
\hline
\vspace{-1.5mm}
 &&&&\\
  $a^{-1}c\rightarrow a^{-1}bc$&$c$&$c$&$\quad$ &$B=2,~A=-1$
  \\
\hline 
$a^{1-A}b^{-1}c\rightarrow a^{A}bc$ &
 $\quad\quad\begin{matrix}
 \\
 a^{B-2}\\ 
 a^{B-3}\\ 
\dots \\
a^{B-A+1}\\
 \end{matrix} $& $\begin{matrix}
 \\
 a^{B-A+1}\\ 
 a^{B-A+2}\\
\dots \\ 
a^{B-2}\\
 \end{matrix}\quad\quad$ & $(9)$ & $B\geq 4$ and  $A\geq3$\\ 
    \hline
$a^{1-A}b^{-1}c\rightarrow a^{A-1}bc$ &
 $\quad\quad\begin{matrix}
 \\
 a^{B-2}\\ 
 a^{B-3}\\ 
\dots \\
a^{B-A+2}\\
 \end{matrix} $& $\begin{matrix}
 \\
 a^{B-A+2}\\ 
 a^{B-A+3}\\
\dots \\ 
a^{B-2}\\
 \end{matrix}\quad\quad$ & $(10)$ & $B\geq 6$ and  $A\geq4$\\ 

\hline
$a^{A-1}b\rightarrow a^{1-A}b^{-1}$ &
 $\quad\quad\begin{matrix}
 \\
 id\\ 
 a\\ 
\dots \\
a^{A-3}\\
 \end{matrix} $& $\begin{matrix}
 \\
 a^{B-A+1}\\ 
 a^{B-A+2}\\
\dots \\ 
a^{B-2}\\
 \end{matrix}\quad\quad$ & $(11)$ & $B\geq 4$ and  $A\geq3$\\ 
 \hline
 \vspace{-2mm}
 &&&&\\
 $a^{A-1}b\rightarrow a^{A}bc$ &$c$& $a^{B-A}$&$\quad $& $B\geq 2$ and $A\geq 2$\\
 \hline
 \vspace{-2mm}
 &&&&\\
 $a^{A-1}b\rightarrow a^{A-1}bc$ &$c$& $a^{B-A+1}$&\quad & $B\geq 4$ and $A\geq 3$\\
\hline 
$a^{1-A}b^{-1}\rightarrow a^{A-1}b$ &
 $\quad\quad\begin{matrix}
 \\
 a^{B-A+1}\\ 
 a^{B-A+2}\\ 
\dots \\
a^{B-2}\\
 \end{matrix} $& $\begin{matrix}
 \\
 id\\ 
 a\\
\dots \\ 
a^{A-3}\\
 \end{matrix}\quad\quad$ & $(12)$ & $B\geq 4$ and  $A\geq3$\\ 
\hline
\vspace{-2mm}
 &&&&\\
 $a^{1-A}b^{-1}\rightarrow a^{A}bc$ &$a^{B-A}$& $c$&\quad & $B\geq 2$ and $A\geq 2$\\
 \hline
\vspace{-2mm}
 &&&&\\
 $a^{1-A}b^{-1}\rightarrow a^{A-1}bc$ &$a^{B-A+1}$& $c$&\quad & $B\geq 4$ and $A\geq 3$\\ 
\hline 
\end{tabular} 
{\center{\bf{Table 3. Edges of $G(\mathcal{S}'')$ (Case $A\geq -1 \text{ and }2A<B+3$)}}
  
  }
\medskip  
  
Since $\mathcal{S}\subset\mathcal{S}''$, it is clear that the neighbor graph $G(\mathcal{S})$ is a subgraph of the pseudo-neighbor graph.  We will see that Characterization \ref{B_r} will play an important role in the relationship between the neighbor graph $G(\mathcal{S})$ and the pseudo-neighbor graph $G(\mathcal{S}'')$.

 \begin{figure}[h]
 \includegraphics[width=3 in]{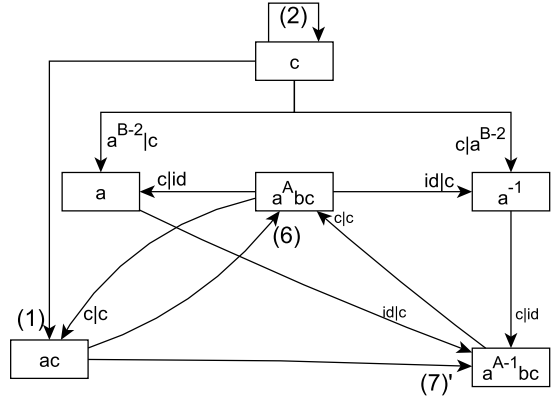} 
 \caption{Proposition \ref{neigh_gra_S}, Case $A=1,B\geq2$. We refer to Tables 1, 2 and 3 for the conditions on the edges.} \label{fig_subfigures1}
 \end{figure}
\begin{theorem}\label{neigh_gra_S}
Let $\mathcal{S}$ be the neighbor set of $T$ and $\mathcal{S}''$ be defined as in \eqref{Pseudo-neigh}, \eqref{Pseudo-neigh0} and \eqref{Pseudo-neigh-1}. The following results hold for $A, B$ satisfying $-1\leq A\leq B,~B\geq 2$ and $2A<B+3$.
\begin{enumerate}
\item For $A\geq 3$ and $B\geq 5$,   $\mathcal{S}=\mathcal{S}''$, that is, $$\mathcal{S}=\{a,~a^{-1},~a^{A-1}b,~a^{1-A}b^{-1},~c,~ac,~a^{-1}c,~a^{A-1}bc,~a^{1-A}b^{-1}c,~a^Abc\}.$$ \label{neigh_gra_S1}
\item For $A=3$ and $B=4$, $\mathcal{S}=\mathcal{S}''\setminus \{a^{-1}c\}$, that is, $$\mathcal{S}=\{a, ~a^{-1},~a^2b, ~a^{-2}b^{-1}, ~c, ~ac, ~a^2bc, ~a^{-2}b^{-1}c, ~a^3bc\}.$$ \label{neigh_gra_S2}
\item For $A=2$ and $B=2$, $\mathcal{S}=\mathcal{S}''\setminus \{a^{-1}c, a^{1-A}b^{-1}c, a, a^{-1}\}$, that is, 
$$\mathcal{S}=\{ab, ~a^{-1}b^{-1}, ~c,~ac,  ~abc, ~a^2bc\}.$$ \label{neigh_gra_S3}
\item For $A=2$ and $B\geq3$, $\mathcal{S}=\mathcal{S}''\setminus \{a^{-1}c, a^{1-A}b^{-1}c\}$, that is, 
$$\mathcal{S}=\{a, ~a^{-1}, ~ab, ~a^{-1}b^{-1}, ~c, ~ac, ~abc, ~a^2bc\}.$$\label{neigh_gra_S4}
\item For $A=1$ and $B\geq 2$, $\mathcal{S}=\mathcal{S}''\setminus\{a^{-1}c,a^{1-A}b^{-1}c, a^{A-1}b, a^{1-A}b^{-1}\}$, that is, 
$$\mathcal{S}=\{a, ~a^{-1}, ~c, ~ac, ~abc, ~bc\}.$$ \label{neigh_gra_S5}
\item For $A=0$ and $B\geq 2$, 
$$\mathcal{S}=\{a, ~a^{-1}, ~c, ~ac, ~a^{-1}bc, ~bc, abc\}.$$ \label{neigh_gra_S6}
\item For $A=-1$ and $B=2$, 
$$\mathcal{S}=\{a, ~a^{-1}, ~c, ~a^{-1}c, ~a^{-1}bc, ~bc\};$$
 For $A=-1$ and $B\geq 3$, 
$$\mathcal{S}=\{a, ~a^{-1}, ~c, ~ac, ~a^{-1}bc, ~bc\}.$$ \label{neigh_gra_S7}

\end{enumerate}
\end{theorem}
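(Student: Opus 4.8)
The plan is to extract $\mathcal{S}$ from the pseudo-neighbor graph $G(\mathcal{S}'')$ by deciding, state by state, whether $B_\gamma=T\cap\gamma(T)$ is empty. Everything rests on the following effective form of Characterization~\ref{B_r}: \emph{a state $\gamma\in\mathcal{S}''$ lies in $\mathcal{S}$ if and only if some infinite walk of $G(\mathcal{S}'')$ issues from $\gamma$}. The ``only if'' part is immediate, since $\mathcal{S}\subset\mathcal{S}''$ (Lemma~\ref{estimate_neighbor}) makes $G(\mathcal{S})$ a subgraph of $G(\mathcal{S}'')$, so the walk furnished by Characterization~\ref{B_r} already lives in $G(\mathcal{S}'')$. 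For the ``if'' part one uses that an edge $\gamma\xrightarrow{\delta|\delta'}\gamma'$, i.e.\ $\delta^{-1}g\gamma g^{-1}\delta'=\gamma'$, rewrites as $\gamma\circ(g^{-1}\delta')=(g^{-1}\delta)\circ\gamma'$. Iterating this identity along an infinite walk $\gamma\xrightarrow{\delta_1|\delta_1'}\gamma_1\xrightarrow{\delta_2|\delta_2'}\gamma_2\to\cdots$ gives
$$\gamma\circ\bigl(g^{-1}\delta_1'\circ\cdots\circ g^{-1}\delta_n'\bigr)=\bigl(g^{-1}\delta_1\circ\cdots\circ g^{-1}\delta_n\bigr)\circ\gamma_n.$$
Evaluating at a fixed $t$ and letting $n\to\infty$, the left side tends to $\gamma(x')$ with $x'=\lim g^{-1}\delta_1'\cdots g^{-1}\delta_n'(t)\in T$, while the right side tends to $x=\lim g^{-1}\delta_1\cdots g^{-1}\delta_n(t)\in T$, because $\gamma_n$ ranges over the finite set $\mathcal{S}''$ and $g^{-1}$ is contracting. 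Hence $x=\gamma(x')\in T\cap\gamma(T)=B_\gamma$, so $\gamma\in\mathcal{S}$. Consequently $\mathcal{S}$ is exactly the set of states of the finite graph $G(\mathcal{S}'')$ admitting an infinite forward walk, which equals the set surviving the standard reduction: iteratively delete every state of out-degree zero (and the edges into it). The proof thus reduces to running this pruning in each parameter regime, keeping only those edges of Tables~1--3 whose ``Condition'' entry is satisfied.

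I would first settle the open case $A\geq 3,\ B\geq 5$ (Figure~\ref{Uni-graph}) and show that no state is deleted, so $\mathcal{S}=\mathcal{S}''$. It suffices to route each of the ten states into a cycle: the self-loop $c\to c$ (edge $(2)$) handles the $c$-block through edge $(1)$; the two-cycle $ac\leftrightarrow a^Abc$ (edge $(6)$ together with $a^Abc\to ac$) and the two-cycle $a^{A-1}b\leftrightarrow a^{1-A}b^{-1}$ (edges $(11),(12)$) are recurrent; and the remaining states $a,a^{-1},a^{-1}c,a^{1-A}b^{-1}c,a^Abc$ reach one of these cycles via edges $(3),(4),(8),(9)$ (here $2A<B+3$ guarantees $B\geq A+2$, so edge $(8)$ is present). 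Each of the ten states therefore carries an infinite walk and $\mathcal{S}=\mathcal{S}''$, matching item~\eqref{neigh_gra_S1}.

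After the open case I would treat the finitely many corners $A=3,B=4$, $A=2$ (with $B=2$ and $B\geq3$), $A=1$, $A=0$ and $A=-1$ (with $B=2$ and $B\geq3$) one at a time, each time recording which edges of Tables~1--3 drop out when a condition such as $B\geq A+2$, $B\geq4$ or $(A,B)\neq(2,2)$ fails, and re-running the pruning. For $A=3,B=4$ the sole edge leaving $a^{-1}c$, namely edge $(8)$ (condition $B\geq A+2=5$), vanishes, so $a^{-1}c$ becomes a sink; since its predecessors $c$ (which still carries the loop $(2)$) and $a^Abc$ retain other out-edges, the deletion does not propagate and $\mathcal{S}=\mathcal{S}''\setminus\{a^{-1}c\}$. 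In the degenerate corner $A=2,B=2$, where $\mathcal{D}=\{id,c\}$, the edges bearing the explicit exclusion $(A,B)\neq(2,2)$ together with those requiring $A\in\{-1,0,1\}$ all disappear, leaving the four states $a,\ a^{-1},\ a^{-1}c,\ a^{1-A}b^{-1}c$ simultaneously of out-degree zero; they are deleted at once, and the survivors $\{ab,a^{-1}b^{-1},c,ac,abc,a^2bc\}$ close up into the cycle $ac\to abc\to ab\to a^2bc\to ac$, giving item~\eqref{neigh_gra_S3}. The remaining regimes are read off in the same way against Figures~\ref{fig_subfigures2} and~\ref{fig_subfigures1} and Tables~1--3, where the activation of the extra edges for $A\in\{-1,0,1\}$ (those involving $bc$ and $a^{-1}bc$) yields the lists in items~\eqref{neigh_gra_S4}--\eqref{neigh_gra_S7}.

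The main obstacle is precisely this case bookkeeping: for every regime one must verify both that each deleted state genuinely loses \emph{all} its outgoing edges (so that no infinite walk can pass through it) and that each surviving state still reaches a cycle, while tracking how the ``Condition'' column reshapes the graph. The special values $A\in\{-1,0,1\}$ are the delicate ones, since the edges they switch on genuinely alter the recurrent structure, so the corresponding graphs must be inspected separately rather than specialized from Figure~\ref{Uni-graph}. Throughout, Proposition~\ref{num_p2} serves as a consistency check, as each resulting list must contain at least six elements.
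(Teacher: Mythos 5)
Your proposal is correct and takes essentially the same route as the paper: both reduce the theorem to the statement that $\mathcal{S}$ consists exactly of the states of $G(\mathcal{S}'')$ from which an infinite walk emanates (the paper invokes Characterization~\ref{B_r} directly, while you rederive the ``if'' direction by the telescoping identity $\gamma\circ g^{-1}\delta_1'\cdots g^{-1}\delta_n'=g^{-1}\delta_1\cdots g^{-1}\delta_n\circ\gamma_n$), and then prune sink states case by case from Tables~1--3 and the corresponding figures. Only two cosmetic slips: in the case $A\geq 3$, $B\geq 5$ your routing list omits the state $a^{A-1}bc$ (it reaches the recurrent part via edge $(5)$ followed by $(9)$), and for $A=3$ the needed bound $B\geq A+2$ follows from the case assumption $B\geq 5$ rather than from $2A<B+3$ alone.
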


\begin{proof}
By Characterization \ref{B_r}, the neighbor graph $G(\mathcal{S})$ is obtained from the pseudo-neighbor graph $G(\mathcal{S}'')$ by deleting the states that are not the starting state of an infinite walk.
For $A\geq 3, B\geq 5$, from Figure \ref{Uni-graph}, it is clear that there is an infinite walk starting from each state of $G(\mathcal{S}'')$.
For $A=3, B=4$, from Table~ 1, Table~ 2 and Table~ 3, we know that there is exactly one state $a^{-1}c$ from which there is no outgoing edge.
For Item \eqref{neigh_gra_S3},\eqref{neigh_gra_S4},\eqref{neigh_gra_S5}, and \eqref{neigh_gra_S6}, see Figure~ \ref{fig_secondsub}, Figure~ \ref{fig_thirdsub}, Figure~ \ref{fig_subfigures1} and Figure~ \ref{Aequal0}, respectively.
For Item \eqref{neigh_gra_S7},  it is easy to check that $a^{-1}c$ is the starting state of an infinite walk if and only if $B=2$ and $ac$ is the starting state of an infinite walk if and only if $B\geq 3$. Since the neighbor set has at least six elements by Proposition \ref{num_p2}, we get the results of Item~(\ref{neigh_gra_S7}) (see Figure \ref{Aequal-1} for more details). 
\end{proof}

\begin{figure}[h] 
\includegraphics[width=3 in]{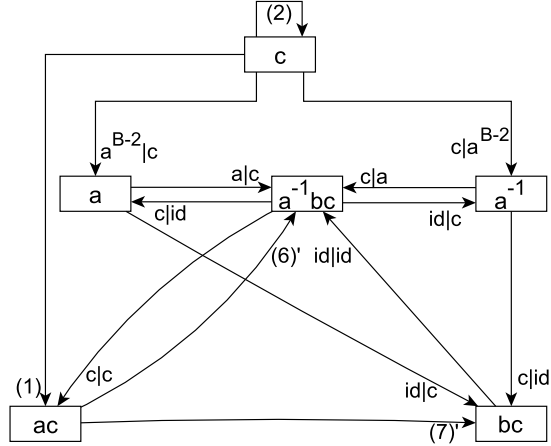}  
\caption{Proposition \ref{neigh_gra_S}, case $A=-1,B\geq3$. For the case $B=2$, we only need to replace $ac$ by $a^{-1}c$ and change the incoming and outgoing edges according to Tables 1, 2 and 3.}\label{Aequal-1}
\end{figure}
\begin{figure}[h]
 \includegraphics[width=3 in]{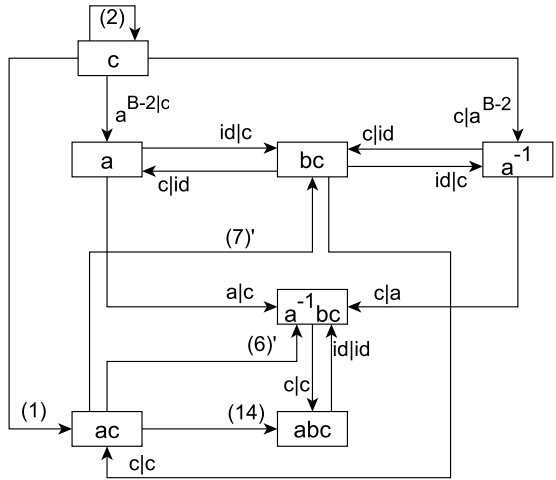}
 \caption{Proposition \ref{neigh_gra_S}, the case $A=0,B\geq2$. We refer to Tables 1, 2 and 3 for the conditions on the edges.}\label{Aequal0}  
 \end{figure}



\section{Characterization of the disk-like tiles for $A\geq -1$ and $2A<B+3$}\label{sec_disklikepos}

We are now in a position to study the topological properties of our family of $p2$-tiles under the conditions $A\geq -1$, $2A<B+3$. We will characterize the disk-like tiles of the family under this condition. Loridant and Luo in \cite{LoridantLuo09} provided necessary and sufficient conditions for a $p2$-tile to be disk-like.  Before stating the theorem, we need a definition.
\begin{definition}(\cite{LoridantLuo09})
If $\mathcal{P}$ and $\mathcal{F}$ are two sets of isometries in $\mathbb{R}^2$, we say that $\mathcal{P}$ is $\mathcal{F}$-connected iff for every disjoint pair $(d,d')$ of elements in $\mathcal{P}$, there exist $n\geq 1$ and elements $d=:d_0, d_1,\dots, d_{n-1},d_n:=d'$ of $\mathcal{P}$ such that $d_i^{-1}d_{i+1}\in\mathcal{F}$ for $i=0,1,\dots,n-1.$
\end{definition}

The following statement is from \cite{LoridantLuo09}.  In fact, the necessary part is due to the classification of Gr\"unbaum and Shephard \cite{GruenbaumShephard89}.

\begin{proposition}\label{disk_rule}
Let $K$ be a crystile that tiles the plane by a $p2$-group. Let $\mathcal{F}$ be the corresponding digit set.
\begin{enumerate}
\item Suppose that the neighbor set $\mathcal{S}$ of $K$ has six elements. Then $K$ is disk-like iff $\mathcal{F}$ is $\mathcal{S}$-connected.\label{p1}
\item Suppose that the neighbor set $\mathcal{S}$ of $K$ has seven elements 
$$\{b^{\pm 1},c,bc,a^{-1}c,a^{-1}bc,a^{-1}b^{-1}c\},$$ 
where $a,b$ are translations, and $c$ is a $\pi$-rotation. Then $K$ is disk-like iff $\mathcal{F}$ is $\{b^{\pm 1},c,bc,a^{-1}c\}$-connected. \label{p2}
\item Suppose that the neighbor set $\mathcal{S}$ of $K$ has eight elements 
$$\{b^{\pm 1},c,bc,a^{-1}c,b^{-1}c,a^{-1}bc, a^{-1}b^{-1}c\}$$
$$(\text{resp. } \{b^{\pm 1},(a^{-1}b)^{\pm 1},c,bc,ac, ab^{-1}c,a^{-1}bc\}),$$
where $a,b$ are translations, and $c$ is a $\pi$-rotation. Then $K$ is disk-like iff $\mathcal{F}$ is $\{b^{\pm 1},c,a^{-1}c\}$-$(\text{resp. } \{c,bc,ac, ab^{-1}c\}-)$connected. \label{p3}
\item Suppose that the neighbor set $\mathcal{S}$ of $K$ has twelve elements 
$$\{a^{\pm 1},b^{\pm 1},(ab)^{\pm 1},c,a^{-1}c,bc,abc,a^{-1}bc,a^{-1}b^{-1}c\},$$
where $a,b$ are translations, and $c$ is a $\pi$-rotation. Then $K$ is disk-like iff $\mathcal{F}$ is $\{c,a^{-1}c,bc\}$-connected.\label{p4}
\end{enumerate} 

\end{proposition}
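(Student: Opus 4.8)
The plan is to reduce disk-likeness of $K$ to a single combinatorial condition on the subdivision pieces of $K$, and then to prove the two implications of each ``iff'' separately, with the Gr\"unbaum--Shephard classification \cite{GruenbaumShephard89} supplying the necessity and a boundary-parametrization argument supplying the sufficiency. First I would record the topological reduction: since $K=\overline{K^\circ}$ is a compact planar set, $K$ is homeomorphic to a closed disk precisely when $K^\circ$ is connected and $\partial K$ is a simple closed curve (Jordan--Schoenflies). I would then fix the framework coming from the set equation $K=\bigcup_{\delta\in\mathcal{F}}g^{-1}\delta(K)$: the $|\mathcal{F}|$ subpieces $g^{-1}\delta(K)$ are congruent copies of $K$ tiling $K$, and two of them, indexed by $\delta,\delta'$, are \emph{edge-adjacent} (meet in more than a single point) exactly when $\delta^{-1}\delta'$ lies in the adjacent-neighbor set $\mathcal{A}\subset\mathcal{S}$, because adjacency is preserved under the homeomorphisms $g$ and $\delta^{-1}$. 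In each of the four cases the connecting set named in the statement is precisely this $\mathcal{A}$: for six neighbors every neighbor is adjacent, so $\mathcal{A}=\mathcal{S}$, whereas for seven, eight or twelve neighbors the excluded elements are exactly the vertex-neighbors. Thus the claim becomes, uniformly, that $K$ is disk-like if and only if $\mathcal{F}$ is $\mathcal{A}$-connected.

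For the necessity (disk-like $\Rightarrow$ $\mathcal{A}$-connected, together with the identification of the four cases), I would argue that if $K$ is a disk then $\{\gamma(K);\gamma\in\Gamma\}$ is a normal monohedral $p2$-tiling of the plane by topological disks. The Gr\"unbaum--Shephard classification of such tilings forces the cyclic arrangement of neighbors around $\partial K$ to be one of finitely many types; since the $\pi$-rotation in $\Gamma$ identifies opposite portions of the boundary, these types are exactly the four neighbor-set configurations listed, and they determine which contact sets $B_\gamma=K\cap\gamma(K)$ are arcs (the adjacent neighbors $\mathcal{A}$) and which are single points. Applying the same classification to the self-similar copy $g(K)$, whose tiles are the subpieces $\delta(K)$, shows that the subpieces must be joined into a single chain through their common arcs, which is exactly $\mathcal{A}$-connectedness of $\mathcal{F}$. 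Equivalently, were $\mathcal{F}$ not $\mathcal{A}$-connected, then $K$ would split along a vertex into parts meeting only in points, producing a cut point and contradicting disk-likeness.

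For the sufficiency ($\mathcal{A}$-connected $\Rightarrow$ disk-like), which I expect to be the main obstacle, the plan is to realise $\partial K$ as a Jordan curve assembled from the contact pieces $B_\gamma$. Here $\mathcal{A}$-connectedness of $\mathcal{F}$ first yields that $K$ is connected by the standard Hata/Kirat--Lau criterion, since the subpieces form an edge-connected chain. The harder point is to upgrade connectedness to disk-likeness: one must show that the $B_\gamma$ with $\gamma\in\mathcal{A}$ are arcs meeting only at their endpoints, that they close up into one simple loop, and that no holes are created in the limit. I would proceed inductively on the subdivision level, using that gluing two disks along a common boundary arc again produces a disk; the finite approximants generated by the set equation are then disks provided the edge-adjacency graph of the subpieces is connected and carries the cyclic structure dictated by the relevant Gr\"unbaum--Shephard type, and $\mathcal{A}$-connectedness is precisely what guarantees this connectivity at every level.

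The delicate part of this last step—and the place where the explicit neighbor graph of Definition \ref{graph} and Characterization \ref{B_r} must be invoked—is to control the behaviour of the contact sets $B_\gamma$ in the infinite subdivision limit: one has to verify that the pieces attached to vertex-neighbors genuinely degenerate to points and do not fuse two neighbouring arcs, so that the resulting closed curve is \emph{simple} rather than merely closed. Ruling out this self-intersection in the limit, uniformly across the four combinatorial types, is the crux of the argument; it is exactly the situation where the GIFS description of $\partial K$ through the neighbor graph, rather than a purely combinatorial count, is indispensable.
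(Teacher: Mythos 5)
A point of orientation first: the paper does not prove Proposition~\ref{disk_rule} at all --- it is quoted from \cite{LoridantLuo09}, with the necessity part attributed to the Gr\"unbaum--Shephard classification of isohedral tilings \cite{GruenbaumShephard89}. So your attempt is being measured against the proof in that cited source, not against an argument in this paper. Your overall architecture (necessity from the classification, sufficiency from an analysis of the contact sets $B_\gamma$) matches the structure of that source, and your necessity direction is essentially sound: if $\mathcal{F}$ were not connected through the adjacent neighbors, $K$ would split into two nonempty compact pieces meeting in a finite set, while a closed disk minus finitely many points remains connected; and identifying which of the listed neighbors are adjacent and which are vertex neighbors is exactly what the classification supplies once $K$ is known to be a disk.

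The sufficiency direction, however, contains a genuine gap, and it sits exactly at the step you yourself call ``the crux''. Your plan is induction on subdivision level plus ``gluing disks along common arcs'', concluding in the limit. This fails for two reasons. First, disk-likeness is not preserved under Hausdorff limits: a sequence of topological disks $K_n\to K$ can converge to a set with a cut point or with non-simple boundary (disks developing a pinch), so even if every finite approximant were a disk, nothing follows for the attractor without uniform control that your sketch does not provide; worse, at finite level the pieces $g^{-1}\delta(K_n)$ need not tile $K_{n+1}$ without overlap, so the gluing lemma does not even apply to the approximants. Second, there is a circularity: you invoke the Gr\"unbaum--Shephard ``cyclic structure'' to conclude that contact sets of adjacent neighbors are arcs and those of vertex neighbors are points, but that classification concerns tilings by topological disks --- precisely what is to be proved in this direction. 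The actual mathematical content of the cited result (as in Bandt--Wang for the lattice case and \cite{LoridantLuo09} for $p2$) is to deduce from the neighbor graph, viewed as a GIFS for $\partial K$, that the sets $B_\gamma$ for $\gamma$ in the stated connecting set are arcs which concatenate end to end into a simple closed curve, and that the remaining $B_\gamma$ degenerate to points. Your proposal names this step and correctly flags it as indispensable, but does not carry it out; as it stands it is a plan for a proof rather than a proof.
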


Applying this result, we obtain the following theorem.
\begin{theorem}\label{diskCrystile}
Let $A,B\in\mathbb{Z}$ satisfy $-1\leq A\leq B,~ B\geq 2$ and $2A<B+3$, and let $T$ be the crystallographic replication tile defined by the data $(g,\mathcal{D})$ given in~\eqref{expand-map} and \eqref{digit}. Then the following statements hold.
\begin{enumerate}
\item If $A\in\{-1,~0,~1\}, B\geq 2$ or $A=2, ~B=2$, then $T$ is disk-like. \label{M1}
\item If $A\geq 2, B\geq3$, then $T$ is non-disk-like. \label{M2}

\end{enumerate}
\end{theorem}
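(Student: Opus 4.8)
The plan is to read off the exact neighbor set $\mathcal{S}$ from Theorem~\ref{neigh_gra_S} and feed it into the disk-likeness criterion of Proposition~\ref{disk_rule}, organizing the argument by the cardinality $\sharp\mathcal{S}$. Throughout, the relevant digit set is $\mathcal{F}=\mathcal{D}=\{id,a,\dots,a^{B-2},c\}$ from~\eqref{digit}, whose only structural features I will use are that its translation digits form a single chain $id,a,\dots,a^{B-2}$ with consecutive entries differing by $a$, and that $id^{-1}c=c$. For each admissible $(A,B)$ I would first exhibit an automorphism $\phi$ of the abstract $p2$-group (a $GL_2(\mathbb{Z})$ change of translation basis, together with a possible translation of the $\pi$-rotation) carrying the normalized neighbor set listed in the appropriate item of Proposition~\ref{disk_rule} onto the concrete set $\mathcal{S}$. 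Pushing the prescribed connecting set $\mathcal{F}_{\mathrm{conn}}$ through $\phi$ reduces disk-likeness to deciding whether $\mathcal{D}$ is $\mathcal{F}_{\mathrm{conn}}$-connected.

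For Item~(1), the cases $A\in\{-1,1\}$ and $A=2,B=2$ give $\sharp\mathcal{S}=6$, handled by Proposition~\ref{disk_rule}(\ref{p1}), where the connecting set is $\mathcal{S}$ itself; the case $A=0$ gives $\sharp\mathcal{S}=7$, handled by Proposition~\ref{disk_rule}(\ref{p2}) through the identification $\phi(\mathsf a)=b^{-1},\ \phi(\mathsf b)=a,\ \phi(\mathsf c)=c$, under which $\{a,a^{-1},c,ac,a^{-1}bc,bc,abc\}$ becomes the standard seven-neighbor set and the connecting set becomes $\{a^{\pm1},c,ac,bc\}$. In each of these cases the relevant connecting set contains $c$, and it contains the translation $a$ whenever $B\geq 3$ (for $B=2$ one has $\mathcal{D}=\{id,c\}$). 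Hence the digit chain $id,a,\dots,a^{B-2}$ is connected through $a$ and joined to $c$ through the step $id\to c$, so $\mathcal{D}$ is $\mathcal{F}_{\mathrm{conn}}$-connected and $T$ is disk-like.

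For Item~(2) I would distinguish two subcases. If $A\geq 3$ then Theorem~\ref{neigh_gra_S} gives $\sharp\mathcal{S}\in\{9,10\}$; since the necessary direction of Proposition~\ref{disk_rule} --- resting on the classification of Gr\"unbaum and Shephard~\cite{GruenbaumShephard89} --- forces a disk-like $p2$-tile to have $6$, $7$, $8$ or $12$ neighbors, no such $T$ can be disk-like and nothing further is required. If $A=2$ and $B\geq 3$ then $\mathcal{S}=\{a,a^{-1},ab,a^{-1}b^{-1},c,ac,abc,a^2bc\}$ has eight elements and matches the second configuration of Proposition~\ref{disk_rule}(\ref{p3}) via $\phi(\mathsf a)=b,\ \phi(\mathsf b)=ab,\ \phi(\mathsf c)=ac$, under which the connecting set becomes $\{c,ac,abc,a^2bc\}$ --- a set consisting only of $\pi$-rotations. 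The key observation is then that $d^{-1}d'$ can lie in a set of rotations only if exactly one of $d,d'$ equals $c$; thus there is no admissible step between two distinct translation digits, and within $\mathcal{D}$ the rotation $c$ is joined only to $id$ (the rotation $ac$ would link it to $a^{-1}\notin\mathcal{D}$). For $B\geq 3$ the digit $a$ is therefore isolated, so $\mathcal{D}$ is not $\mathcal{F}_{\mathrm{conn}}$-connected and $T$ is not disk-like.

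The step I expect to be delicate is the first one in each case: determining the automorphism $\phi$ that puts the concrete neighbor set of Theorem~\ref{neigh_gra_S} into the precise normalized form demanded by Proposition~\ref{disk_rule}, and transporting the associated connecting set without slips in the $p2$-arithmetic. Once $\phi$ is fixed the connectivity bookkeeping for $\mathcal{D}$ is routine, and the whole classification turns on a single dichotomy: whether $\mathcal{F}_{\mathrm{conn}}$ contains the pure translation $a$ --- in which case the digit chain stays connected and $T$ is disk-like --- or is built from rotations alone, in which case the chain breaks as soon as $B\geq 3$ and $T$ fails to be disk-like.
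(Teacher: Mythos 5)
Your proposal is correct and follows essentially the same route as the paper: read off $\mathcal{S}$ from Theorem~\ref{neigh_gra_S}, apply Proposition~\ref{disk_rule} case by cardinality (checking $\mathcal{S}$-connectedness of $\mathcal{D}$ for the six- and seven-neighbor cases, non-connectedness with respect to the rotation-only connecting set for $A=2$, $B\geq 3$, and the Gr\"unbaum--Shephard cardinality obstruction for $A\geq 3$). Your explicit normalizing automorphisms and your articulated reason why $\mathcal{D}$ breaks apart under a connecting set of pure rotations merely spell out steps the paper leaves as ``easily checked'' (via the substitution $a'=a^2b$, $b'=ab$), so the two arguments coincide in substance.
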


\begin{proof}
Let $\mathcal{S}$ be the neighbor set of $T$.
By  Theorem \ref{neigh_gra_S}, we know that in the assumption of $A\in \{-1, ~1\},~B\geq 2$ and $A=2, B=2$, the neighbor sets of $T$ all have six elements. Let us check the case $A=1, B\geq 2$ by showing that $\mathcal{D}$ is $\mathcal{S}$-connected and applying Proposition \ref{disk_rule} \eqref{p1}. Then $A=-1, B\geq 2$ and $A=2, B=2$ can be checked in the same way.

For $A=1, B\geq 2$, the digit set is $\mathcal{D}=\{id, a,\dots, a^{B-2}, c\}$ and the neighbor set is $\mathcal{S}=\{a, a^{-1}, c, abc, bc, ac\}$. It is easy to find that the disjoint pairs $(d,d')$ in $\mathcal{D}\times \mathcal{D}$ are the following ones:
\begin{equation}\label{pairs}
(id,a^\ell), (a^\ell ,id),
(id,c), (c ,id),
(a^k,a^{k'})
(a^j,c), \text{or }(c,a^j),
\end{equation}
where $\ell,k, k',j \in\{1,2,\dots,B-2 \}$.

We will check the pair $(a^k,a^{k'})$ at first.
If $k<k'$,  then let $n=k'-k$, and 
$$d_0=a^k, d_1=a^{k+1},\dots,d_{n-1}=a^{k'-1},d_n=a^{k'}.$$
hence  $d_i^{-1}d_{i+1}=a$ is in $\mathcal{S}$ for $0 \leq i\leq n-1$.
If $k>k'$,  $d_i^{-1}d_{i-1}=a^{-1}$ is also in $\mathcal{S}$ for $0\leq i \leq n-1$.
To check $(id, a^\ell)$ and $(a^j, c$), it suffices to check $(id, a)$ and $(a,c)$. It is clear for $(id,a)$. For $(a,c)$, let $n=2$, and $d_0=a, ~d_1=id, ~d_2=c$. Hence, we have proved that $\mathcal{D}$ is $\mathcal{S}$- connected. By Proposition \ref{disk_rule} \eqref{p1}, $T$ is disk-like.

For $A=0 \text{ and } B\geq 2$ and the neighbor set $$\mathcal{S}=\{a, a^{-1}, c, a^{-1}bc, bc, ac, abc\}$$ has seven elements. By Proposition \ref{disk_rule} \eqref{p2}, we need to prove that $\mathcal{D}$ is 
$\{a, a^{-1}, c,\\ ac, bc\}$-connected. This is achieved in the same way as above.

We now prove Item \eqref{M2}. For $A=2, B\geq 3$ and by Theorem \ref{neigh_gra_S}, we know that
$$\mathcal{S}=\{a, ~a^{-1}, ~ab, ~a^{-1}b^{-1}, ~c, ~abc, ~a^2bc, ~ac\}.$$
Let $a'=a^2b, b'=ab$, then $\mathcal{S}$ has the form 
$$\Upsilon:=\{b', ~b'^{-1}, ~a'^{-1}b',  ~a'b'^{-1}, ~c, ~b'c, ~a'c, ~a'b'^{-1}c \}$$
of Proposition \ref{disk_rule} \eqref{p3}. However, it is easily checked that $\mathcal{D}$ is not 
$\{c, abc, ab^2c,ac\}$-connected. By Proposition \ref{disk_rule} \eqref{p3}, $T$ is not disk-like.

For $A\geq 3, ~B\geq 4$,  we have 
$\sharp\mathcal{S}=9$ if $A=3, B=4$, and $\sharp\mathcal{S}=10$ if $A\geq 3, B\geq 5$ by Theorem  \ref{neigh_gra_S}. According to Gr\"unbaum and Shephard's classification of isohedral tillings (see \cite[Sect. 6.2, p.285]{GruenbaumShephard89}), the cases in Proposition \ref{disk_rule} are the only ones leading to disk-like $p2$-tiles in the plane. So  $T$ is non-disk-like for $A\geq 3, B\geq4$.
\end{proof}

\section{Characterization of the disk-like tiles for $A\leq -2$ and $2|A|<B+3$}\label{sec_disklikeneg}
We now deal with the case $A\leq -2$ and $2|A|<B+3$. Let us recall a statement in \cite[Equation (2.11), p. 2177]{AkiyamaLoridant10}. Let $T^{\ell}$ be the lattice tile associated with the expanding matrix
 $M=\left(\begin{matrix}
0 & -B\\
1 & -A\\
\end{matrix}\right)$ and digit set $\mathcal{D}$ (see \eqref{digit}) and $\bar{T}^{\ell}$ the lattice tile associated with the matrix  $\Bar{M}=\left(\begin{matrix}
0 & -B\\
1 & A\\
\end{matrix}\right)$ and $\mathcal{D}$. Then we have 
\begin{equation}\label{relat_pos_neg}
\bar{T}^{\ell}=\left(\begin{matrix}
-1 & 0\\
0 & 1\\
\end{matrix}\right)T^{\ell}+\sum_{k=1}^{\infty}\bar{M}^{-2k}\left(\begin{matrix}
B-1 \\
0\\
\end{matrix}\right).
\end{equation}
It follows that $T^{\ell}$ and $\bar{T}^{\ell}$ have the same topology. It is remarkable that this does not hold for the associated crystiles $T$ and $\bar{T}$, as is illustrated below. 

By  \cite{AkiyamaThuswaldner05}, we know all the information on the neighbor set of the lattice tile $T^{\ell}$  for $A\geq -1$, hence we can derive the neighbor set of $\bar{T}^{\ell}$ immediately.

\begin{lemma}\label{lem:aff} 
If $2A<B+3$ and $A>0$, then the neighbor set of $\bar{T}^{\ell}$ is
\begin{equation}\label{neigh_nega_latt}
\{(-A,1), (-A+1,1), (-1,0), (1,0), (A,-1), (A-1,-1)\},
\end{equation}
or, using translation mappings  rather than vectors, 
\begin{equation}
\{a^{-A}b, a^{-A+1}b, a^{-1}, a, a^Ab^{-1}, a^{A-1}b^{-1}\}.
\end{equation} 
\end{lemma}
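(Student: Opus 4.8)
The plan is to transport the known neighbor set of $T^{\ell}$ over to $\bar{T}^{\ell}$ through the affine conjugacy recorded in \eqref{relat_pos_neg}. First I would rewrite the neighbor set $\mathcal{S}^{\ell}$ supplied by Proposition~\ref{neigh_lattice} in vector form: for $A>0$ and $2A<B+3$,
$$\mathcal{S}^{\ell}=\{(A,1),(A-1,1),(1,0),(-1,0),(-A,-1),(-A+1,-1)\},$$
where the vector $(p,q)$ stands for the translation $a^p b^q$. The hypotheses $A>0$ and $2A<B+3$ enter the argument \emph{only} here; they are what force $\sharp\mathcal{S}^{\ell}=6$ and pin down these six vectors.

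Next, I would read \eqref{relat_pos_neg} as the statement $\bar{T}^{\ell}=\Phi(T^{\ell})$, where $\Phi(x)=Px+w$ is the affine bijection with linear part
$$P=\begin{pmatrix}-1&0\\0&1\end{pmatrix}$$
and translation part $w=\sum_{k\geq 1}\bar{M}^{-2k}\begin{pmatrix}B-1\\0\end{pmatrix}$, the series converging because $\bar{M}$ is expanding. The key feature of $\Phi$ is that it carries the lattice tiling $\{T^{\ell}+z;\ z\in\mathbb{Z}^2\}$ to a tiling of the same type: since $P$ has integer entries and $\det P=-1$, we have $P\mathbb{Z}^2=\mathbb{Z}^2$, and
$$\Phi(T^{\ell}+z)=PT^{\ell}+Pz+w=\bar{T}^{\ell}+Pz,$$
so that $\{\bar{T}^{\ell}+z;\ z\in\mathbb{Z}^2\}$ is again a lattice tiling.

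The core step is then the observation that, because $\Phi$ is a homeomorphism, $T^{\ell}\cap(T^{\ell}+z)\neq\emptyset$ holds if and only if $\bar{T}^{\ell}\cap(\bar{T}^{\ell}+Pz)\neq\emptyset$. Hence the neighbor set of $\bar{T}^{\ell}$ is exactly $P\mathcal{S}^{\ell}$, and applying $P$ (the reflection $(p,q)\mapsto(-p,q)$ across the $y$-axis) to each of the six vectors above yields
$$\{(-A,1),(-A+1,1),(-1,0),(1,0),(A,-1),(A-1,-1)\},$$
which is precisely \eqref{neigh_nega_latt}; reading the vectors back as translation mappings gives the second displayed formula of the statement.

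I expect no deep obstacle: the whole proof is a transport of structure, and the only points that genuinely require care are bookkeeping ones. Specifically, one must note that the linear part $P$ is unimodular over $\mathbb{Z}$, so that the transformed neighbor indices $Pz$ again exhaust $\mathbb{Z}^2$ and no spurious or missing neighbors are introduced, and that the translation part $w$ of $\Phi$ is irrelevant to the neighbor structure, being a global shift of the entire tiling. With these two remarks in place the computation $P\mathcal{S}^{\ell}$ is immediate.
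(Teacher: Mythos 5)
Your proposal is correct and follows essentially the same route as the paper's own proof: both transport the neighbor set $\mathcal{S}^{\ell}$ of Proposition~\ref{neigh_lattice} to $\bar{T}^{\ell}$ via the affine relation~\eqref{relat_pos_neg}, observing that a vector $\gamma$ is a neighbor of $T^{\ell}$ iff $P\gamma$ (with $P=\left(\begin{smallmatrix}-1&0\\0&1\end{smallmatrix}\right)$) is a neighbor of $\bar{T}^{\ell}$. Your write-up merely spells out the bookkeeping (unimodularity of $P$, irrelevance of the translation part) that the paper leaves implicit.
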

\begin{proof}
The vector $\gamma=\left(\begin{matrix}
p\\
q\\
\end{matrix}\right)
\in\mathbb{Z}^2$ is a neighbor of $T^{\ell}$ iff $T^{\ell}\cap (T^{\ell}+\gamma)\neq\emptyset$. Let $\gamma'=\left(\begin{matrix}
-1 & 0\\
0 & 1\\
\end{matrix}\right)\left(\begin{matrix}
p \\
q \\
\end{matrix}\right)$, then this is equivalent to $\bar{T}^{\ell}\cap (\bar{T}^{\ell}+\gamma')\neq\emptyset$ by \eqref{relat_pos_neg}. Thus, using Proposition \ref{neigh_lattice}, we get the neighbor set \eqref{neigh_nega_latt} of $\bar{T}^{\ell}$.
\end{proof}

For $-1\leq A\leq B\geq 2$, the data $(g,\mathcal{D},p2)$ is a crystallographic number system, hence, the tiling group is the whole crystallographic group $p2$~\cite{Loridant12}. It follows from Lemma~\ref{lem:tiling} that this property still holds for $A\leq -2$. Now, by Lemma~\ref{lem:aff}, to obtain the neighbor set of $p2$-crystiles for $ A \leq -2$, we only need to repeat the methods in Section~\ref{sec_neigh1} and~\ref{sec_neigh2}, dealing with  similar  estimates and computations. We come to the following theorem for $A\leq-2$ (we do not reproduce the computations).
\begin{theorem}\label{Minius}
Let $A,B\in\mathbb{Z}$ satisfy $2\leq -A\leq B$ and $2|A|<B+3$, and let $T$ be the crystallographic replication tile defined by the data $(g,\mathcal{D})$ given in~\eqref{expand-map} and \eqref{digit}. Then the following statements hold.
\begin{itemize}
\item[(1)] For $A=-2$ and $B=2 \text{ or } 3$, the neighbor set of the crystile $T$ is 
$$\mathcal{S}=\{a, a^{-1}, c, a^{-1}c, a^{-2}bc, a^{-1}bc\};$$
\item[(2)] For $A=-2, B\geq 4$, the neighbor set of the crystile $T$ is 
$$\mathcal{S}=\{a,~ a^{-1},~ c,~ ac, ~a^{-2}bc, ~a^{-1}bc\}.$$
\item[(3)] For $A=-3, B=4$, the neighbor set of the crystile $T$ is $$\mathcal{S}=\{a, ~ a^{-1}, ~a^{-2}b, ~a^2b^{-1}, ~ c,~ a^{-1}c, ~a^{-2}bc, ~a^{-3}bc\}.$$
\item[(4)] For $A=-3, B\geq 5$, the neighbor set of the crystile $T$ is $$\mathcal{S}=\{a, ~ a^{-1}, ~a^{A+1}b, ~a^{-1-A}b^{-1}, ~ c,~ ac,~ a^{A+1}bc, ~a^{A}bc, ~a^{-1}c\}.$$
\item[(5)] For $A=-4, B\geq 6$, the neighbor set of the crystile $T$ is $$\mathcal{S}=\{a, ~ a^{-1}, ~a^{A+1}b, ~a^{-1-A}b^{-1}, ~ c, ~a^{-1}c,~ ac, ~a^{A+1}bc, ~a^{A}~bc, ~a^{-A-1}b^{-1}c\}.$$
\end{itemize}
\end{theorem}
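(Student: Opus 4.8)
The plan is to transplant, essentially verbatim, the machinery developed for $A\ge -1$ in Sections~\ref{sec_neigh1} and~\ref{sec_neigh2}, feeding it the lattice neighbor data obtained by reflection in Lemma~\ref{lem:aff}. First I would fix the associated lattice tile: for the crystile $T$ with parameter $A\le -2$, the tile $T^{\ell}$ of Lemma~\ref{twotiles} is precisely the reflected tile $\bar{T}^{\ell}$ of Section~\ref{sec_disklikeneg} (with $|A|$ playing the role of the positive parameter), since its expanding linear part $\left(\begin{smallmatrix} 0 & -B\\ 1 & -A\end{smallmatrix}\right)$ has off-diagonal entry $-A=|A|>0$. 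Its neighbor set $\mathcal{S}^{\ell}$ is therefore the one produced in Lemma~\ref{lem:aff}, namely the reflection of the Akiyama--Thuswaldner set of Proposition~\ref{neigh_lattice}. With $\mathcal{S}^{\ell}$ in hand, Lemma~\ref{neigh_crystile} yields the coarse inclusion $\mathcal{S}\subset\mathcal{S}':=\mathcal{S}^{\ell}\cup\{c\}\cup\mathcal{S}^{\ell}c$, leaving at most $13$ candidate neighbors.

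Next I would rerun the refinement of Lemma~\ref{estimate_neighbor} and the graph construction of Theorem~\ref{neigh_gra_S} in this new setting. The conjugation formulas \eqref{ab} and \eqref{abc} were derived for arbitrary $A\in\Z$, so they apply unchanged; only their evaluations differ in sign. Using them I would compute $\delta^{-1}g\gamma g^{-1}\delta'$ for each $\delta,\delta'\in\mathcal{D}$ and each candidate $\gamma\in\mathcal{S}'$, discard those $\gamma$ admitting no outgoing edge into $\mathcal{S}'$ to obtain a pseudo-neighbor set $\mathcal{S}''$, and then assemble the pseudo-neighbor graph $G(\mathcal{S}'')$ together with the $A,B$-conditions on its edges, the exact analogues of Tables~1--3. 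Characterization~\ref{B_r} then identifies $\mathcal{S}$ as the set of states of $G(\mathcal{S}'')$ from which an infinite walk issues, and in the borderline configurations I would invoke Proposition~\ref{num_p2} (at least six neighbors) to settle the remaining ambiguities. Finally I would sort the outcome into the case distinctions of the statement: $A=-2$ with $B\in\{2,3\}$ versus $B\ge4$, $A=-3$ with $B=4$ versus $B\ge5$, and $A=-4$ with $B\ge6$ (the generic ten-neighbor configuration), each subject to $2|A|<B+3$.

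The main obstacle, and the reason the result cannot simply be quoted from the positive case, is that the crystile $T$, unlike its lattice companion, carries no reflection symmetry: the relation \eqref{relat_pos_neg} transforms $\bar{T}^{\ell}$ into $T^{\ell}$ but does not descend to the level of the neighbor graph, so $G(\mathcal{S})$ must be recomputed from scratch rather than reflected. The genuinely new feature to control is that the surviving neighbor structure depends on the magnitude $|A|$ in a way the positive range does not predict; the neighbor count jumps from nine at $(A,B)=(-3,\,B\ge5)$ to ten at $(A,B)=(-4,\,B\ge6)$. The delicate part is thus the infinite-walk bookkeeping separating these regimes, especially at the smallest admissible $B$, where individual edges appear or disappear and the six-neighbor lower bound of Proposition~\ref{num_p2} is needed to close the argument.
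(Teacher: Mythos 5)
Your proposal takes essentially the same route as the paper: the paper's own proof of this theorem consists exactly of feeding the reflected lattice neighbor set of Lemma~\ref{lem:aff} into the machinery of Sections~\ref{sec_neigh1} and~\ref{sec_neigh2} (coarse estimate via Lemma~\ref{neigh_crystile}, pseudo-neighbor graph with edge conditions as in Tables 1--3, pruning of states without infinite walks via Characterization~\ref{B_r}, and the six-neighbor lower bound of Proposition~\ref{num_p2}), with the explicit computations omitted just as in your outline. Your additional remarks—that the conjugation formulas~\eqref{ab} and~\eqref{abc} hold for arbitrary $A\in\Z$, and that the reflection relation~\eqref{relat_pos_neg} does not descend to the crystile's neighbor graph, forcing a recomputation rather than a reflection—match the paper's reasoning, so the proposal is correct and faithful to the original argument.
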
 
Consequently, we can infer from Lemma \ref{disk_rule}  the following theorem.

\begin{theorem}\label{Minius1}
Let $A,B\in\mathbb{Z}$ satisfy $2\leq -A\leq B$ and $2|A|<B+3$, and let $T$ be the crystallographic replication tile defined by the data $(g,\mathcal{D})$ given in~\eqref{expand-map} and \eqref{digit}. Then the following statements hold.
\begin{enumerate}
\item If $A=-2, B\geq  2$, then $T$ is disk-like. \label{Minius1_1}
\item  If $A\leq -3, B\geq 4$, then $T$ is not disk-like.\label{Minius1_2}
\end{enumerate}

\end{theorem}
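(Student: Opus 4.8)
The plan is to prove Theorem~\ref{Minius1} by invoking the necessary and sufficient disk-likeness criteria of Proposition~\ref{disk_rule} on the explicit neighbor sets provided by Theorem~\ref{Minius}. The structure mirrors exactly the positive-$A$ argument of Theorem~\ref{diskCrystile}: for each relevant pair $(A,B)$ I read off the neighbor set $\mathcal{S}$ from Theorem~\ref{Minius}, match it up to a relabeling of translations with one of the four configurations listed in Proposition~\ref{disk_rule}, and then check the appropriate $\mathcal{F}$-connectedness condition on the digit set $\mathcal{D}=\{id,a,\dots,a^{B-2},c\}$.

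For Item~\eqref{Minius1_1} ($A=-2$, $B\geq 2$), Theorem~\ref{Minius} items $(1)$ and $(2)$ give a neighbor set $\mathcal{S}$ with exactly six elements (either $\{a,a^{-1},c,a^{-1}c,a^{-2}bc,a^{-1}bc\}$ or $\{a,a^{-1},c,ac,a^{-2}bc,a^{-1}bc\}$). By Proposition~\ref{disk_rule}\eqref{p1}, disk-likeness is then equivalent to $\mathcal{D}$ being $\mathcal{S}$-connected. First I would enumerate the disjoint pairs $(d,d')\in\mathcal{D}\times\mathcal{D}$, which are exactly those of the form~\eqref{pairs}, and then exhibit connecting chains: consecutive powers $a^k,a^{k+1},\dots$ are joined by the generator $a\in\mathcal{S}$ (or $a^{-1}\in\mathcal{S}$), and the pair $(a,c)$ is joined via the intermediate element $id$ using $c\in\mathcal{S}$, precisely as in the $A=1$ computation of Theorem~\ref{diskCrystile}. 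This establishes $\mathcal{S}$-connectedness of $\mathcal{D}$ and hence disk-likeness.

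For Item~\eqref{Minius1_2} ($A\leq -3$, $B\geq 4$) I would split according to the size of $\mathcal{S}$. When $\sharp\mathcal{S}\in\{9,10\}$ (i.e.\ $A=-3,B=4$ with eight elements is the borderline case, and $A=-3,B\geq 5$ or $A=-4,B\geq 6$ giving nine or ten), the argument is purely combinatorial: by the Gr\"unbaum--Shephard classification invoked at the end of Theorem~\ref{diskCrystile}, the only neighbor-set cardinalities compatible with a disk-like $p2$-tile are those appearing in Proposition~\ref{disk_rule} (six, seven, eight, twelve), so any tile whose neighbor set has nine or ten elements is automatically non-disk-like. The genuinely delicate case is $A=-3,B=4$, where $\sharp\mathcal{S}=8$: here I must verify that $\mathcal{S}=\{a,a^{-1},a^{-2}b,a^2b^{-1},c,a^{-1}c,a^{-2}bc,a^{-3}bc\}$ does \emph{not} match either eight-element pattern of Proposition~\ref{disk_rule}\eqref{p3}, or, if it does fit one pattern after a suitable relabeling $a'=a^{-2}b$, $b'$ of translations, that the corresponding $\mathcal{F}$-connectedness condition fails for $\mathcal{D}$ (in parallel with the $A=2,B\geq 3$ non-disk-like computation, where $\mathcal{D}$ was shown not to be $\{c,abc,ab^2c,ac\}$-connected).

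The main obstacle I anticipate is this $A=-3,B=4$ eight-element case: unlike the larger neighbor sets, it cannot be dismissed by cardinality alone, and the relabeling needed to cast $\mathcal{S}$ into the exact symbolic shape required by Proposition~\ref{disk_rule}\eqref{p3} must be carried out carefully so that the failure of the relevant connectedness condition is rigorous rather than an artifact of an incorrect identification of $a,b,c$. Everything else reduces to the two routine mechanisms already deployed in Section~\ref{sec_disklikepos}, namely chaining consecutive powers of $a$ for the positive cases and citing the Gr\"unbaum--Shephard cardinality restriction for the high-neighbor-count negative cases.
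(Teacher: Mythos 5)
Your proposal is correct and follows the paper's own proof essentially step for step: the six-neighbor case $A=-2$ via Proposition~\ref{disk_rule}~\eqref{p1} with the routine $\mathcal{S}$-connectedness check, the nine- and ten-neighbor cases dismissed by the Gr\"unbaum--Shephard cardinality restriction exactly as in Theorem~\ref{diskCrystile}, and the delicate eight-neighbor case $A=-3$, $B=4$ handled by matching the pattern of Proposition~\ref{disk_rule}~\eqref{p3} and showing the relevant connectedness condition fails. The only detail to settle in that last case is the relabeling, where the paper takes $a'=a^{-3}b$, $b'=a^{-1}$ (your tentative $a'=a^{-2}b$ does not fit the pattern $\Upsilon$), under which $\mathcal{S}$ does match the eight-element configuration and $\mathcal{D}$ is checked not to be $\{c,\,a^{-1}c,\,a^{-3}bc,\,a^{-2}bc\}$-connected, so $T$ is not disk-like.
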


\begin{proof}
For Item \eqref{Minius1_1}, we know from Theorem \ref{Minius} that the neighbor set of $T$ has six neighbors. Thus, by Proposition \ref{disk_rule} Item \eqref{p1}, $T$ is disk-like.

For $A=-3, B=4$, the neighbor set is
$$\mathcal{S}=\{a, ~ a^{-1}, ~a^{-2}b, ~a^2b^{-1}, ~ c, ~a^{-2}bc, ~a^{-3}bc,~ a^{-1}c\}.$$
Let $a'=a^{-3}b, b'=a^{-1}$, then $\mathcal{S}$ has the form 
$$\Upsilon:=\{b', ~b'^{-1}, ~a'^{-1}b',  ~a'b'^{-1}, ~c, ~b'c, ~a'c, ~a'b'^{-1}c \}$$
of Proposition \ref{disk_rule} \eqref{p3}. However, it is easily checked that $\mathcal{D}$ is not 
$\{c,~ a^{-2}bc, ~ab^{-3}c, \\ ~a^{-1}c\}$-connected. By Proposition \ref{disk_rule} Item \eqref{p3}, $T$ is not disk-like.

For the cases $A=-3,~B\geq 5$ and $A\leq -4,~ B\geq 6$, $T$ has $9$ and $10$ neighbours, respectively. Thus $T$ is not disk-like as we have discussed in Theorem \ref{diskCrystile}. 
\end{proof}
In particular, we see that for $A=2$ and $B=3$, the crystile is not disk-like (from Theorem \ref{diskCrystile}), while for $A=-2$ and $B=3$, it is disk-like (see Figure \ref{fig:AMA}).

\begin{figure}
\centering
\includegraphics[width=2.3 in]{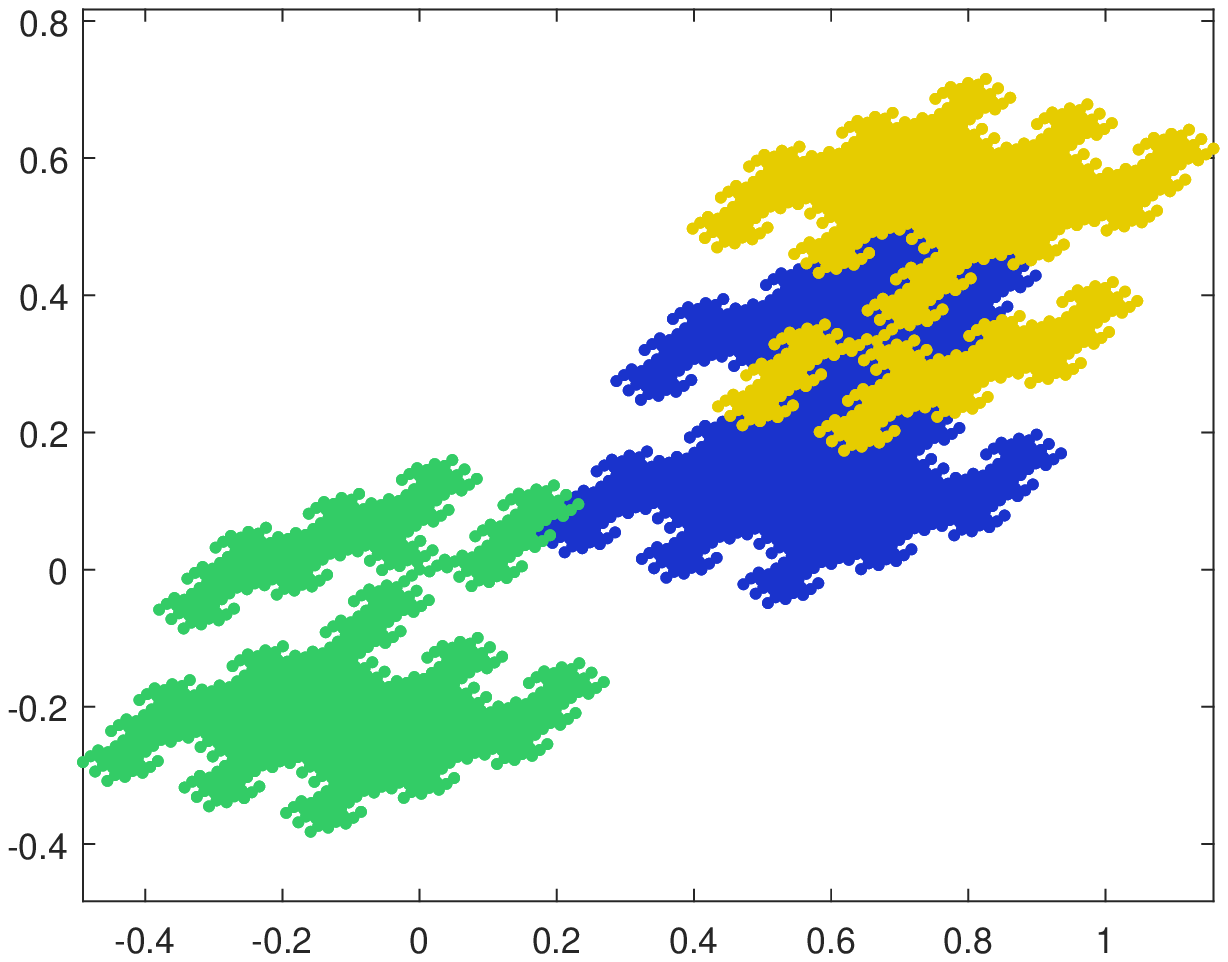}\quad \includegraphics[width=2.3 in]{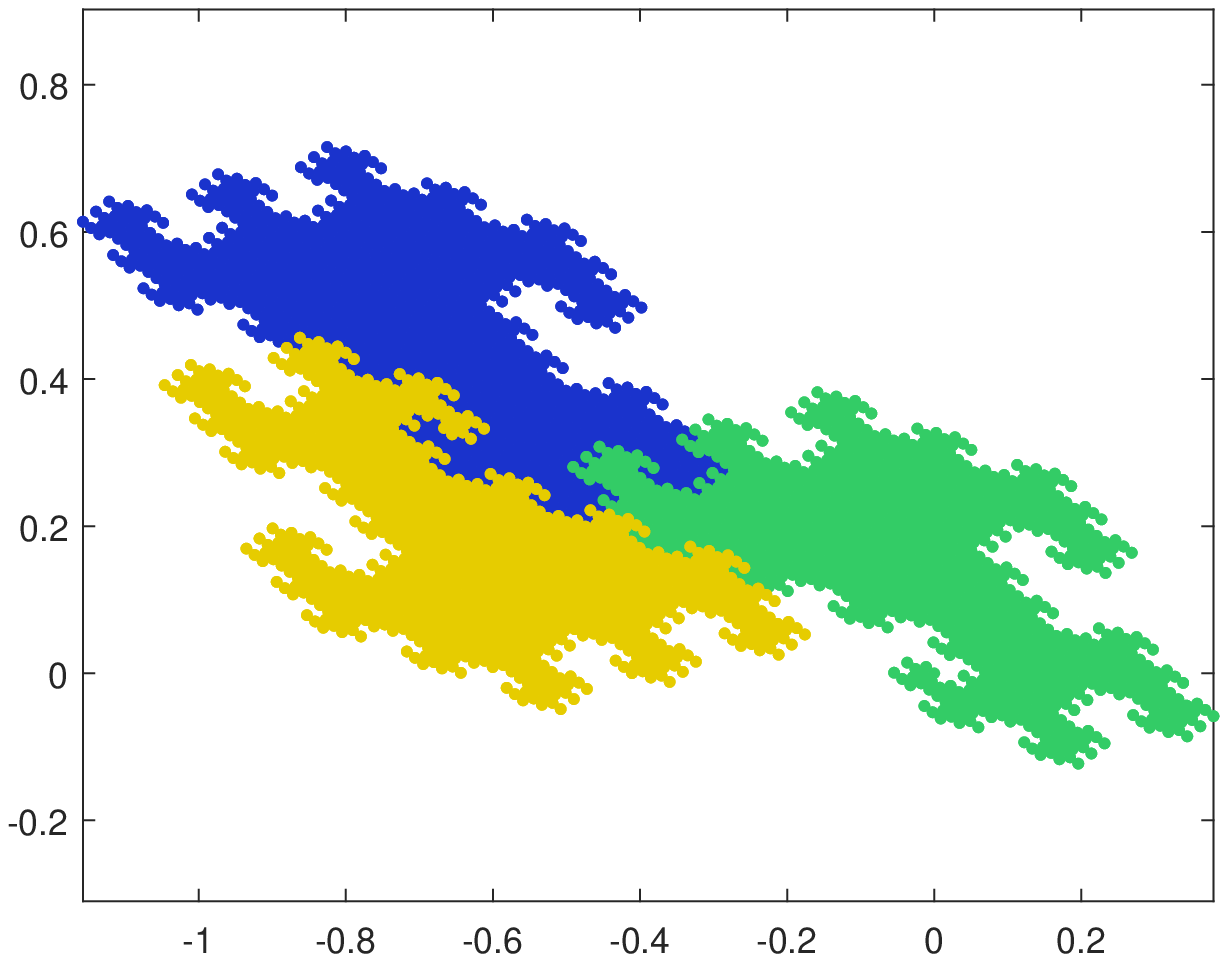}
\caption{$T$ for $A=2,B=3$ on the left and for $A=-2,B=3$ on the right}\label{fig:AMA}
\end{figure}

\section{Non-disk-likeness of tiles for $2|A|\geq B+3$}
\label{sec_nondisk}
So far, we have dealt with the case $2|A|<B+3$ and characterized the disk-like $p2$-tiles in Theorem \ref{diskCrystile} and Theorem \ref{Minius1}. If $2|A|\geq B+3$, it was proved in \cite{LauLeung07} that the lattice tiles $T^{\ell}$ are not disk-like. We prove that this also holds for the corresponding $p2$-tiles $T$. 

Recall that the $p2$-tile $T$ satisfies the equation
\begin{equation}\label{setequation}
T=\bigcup_{i=1}^B f_i(T),
\end{equation}
where 
$$f_1=g^{-1}\circ id,~ f_i=g^{-1}\circ a^{i-1} ~(2\leq i\leq B-1),~ f_B=g^{-1}\circ c,$$
$g$  is the expanding map,  
and $\mathcal{D}$ is the digit set  defined as before.
We denote the fixed point of a mapping $f$ by Fix$(f)$ and the linear part of $g$ by $M$. Then we have the following facts:
\begin{equation}\label{Fixpoint1}
\text{Fix}(f_i)=(M-I_2)^{-1}\left(\begin{matrix}
i-1-\frac{B-1}{2}\\
0\\
\end{matrix}\right) \text{ for } 1\leq i\leq B-1,
\end{equation}
\begin{equation}\label{Fixpoint2}
\text{Fix}(f_B)=(M+I_2)^{-1}\left(\begin{matrix}
\frac{B-1}{2}\\
0\\
\end{matrix}\right).
\end{equation}

By \eqref{setequation}, the fixed points given by \eqref{Fixpoint1} and \eqref{Fixpoint2} all belong to $T$.
First of all, we give a key lemma  for the main result.

\begin{lemma}\label{Intersection}
Let $A, B \in\mathbb{Z}$ satisfying $|A|\leq B\text{ and }~2|A|\geq B+3$, and let $T$ be the $p2$-crystile defined by \eqref{expand-map} and \eqref{digit} and $c(T)$ be the $\pi$-rotation of $T$. Then  $\sharp \big(T\cap c(T)\big)\geq 2$.
\end{lemma}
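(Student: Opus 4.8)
The plan is to exhibit two distinct points of $T\cap c(T)=T\cap(-T)$, and the first step is a symmetry reduction. Since $c(T)=-T$, a point $x$ lies in $T\cap(-T)$ exactly when $x\in T$ and $-x\in T$, a condition invariant under $x\mapsto -x$. Hence $T\cap(-T)$ is invariant under the central symmetry, so it is either empty, equal to $\{0\}$, or contains at least two points. Consequently it suffices to produce a single point $x\neq 0$ with $x,-x\in T$, for then $x$ and $-x$ are two distinct elements of the intersection.

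To produce such points I would use the fixed points of the contractions $f_i$ from \eqref{setequation}, all of which belong to $T$. From the explicit formula \eqref{Fixpoint1} one checks that for $2\le i\le B-1$
$$\mathrm{Fix}(f_i)=(M-I_2)^{-1}\begin{pmatrix} i-1-\tfrac{B-1}{2}\\ 0\end{pmatrix}=-\,\mathrm{Fix}(f_{B+1-i}),$$
since the first coordinates are opposite and $M-I_2$ is invertible ($1$ is not an eigenvalue of the expanding matrix $M$). As both $\mathrm{Fix}(f_i)$ and $\mathrm{Fix}(f_{B+1-i})$ lie in $T$, this relation gives $\mathrm{Fix}(f_i)\in T\cap(-T)$ for each such $i$, and by injectivity of $(M-I_2)^{-1}$ these are $B-2$ pairwise distinct points. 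This already settles the lemma when $B\ge 4$, because then $B-2\ge 2$. The hypotheses $|A|\le B$ and $2|A|\ge B+3$ force $B\ge 3$, so the only remaining case is $B=3$, which in turn forces $A=\pm 3$.

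The case $B=3$ is where I expect the main obstacle to lie: the construction degenerates, since the single admissible index $i=2$ gives $\mathrm{Fix}(f_2)=0$, so the fixed points of the generators only yield the origin. To find a second point for $M=\left(\begin{smallmatrix}0&-3\\1&-3\end{smallmatrix}\right)$ (and its $A=-3$ analogue) I would argue directly. One route is via Characterization \ref{B_r}: compute the edges of the neighbor graph issuing from the state $c$; the self-loop at $c$ carried by the label pair $(a\,|\,a)$ reproduces the point $0=\mathrm{Fix}(g^{-1}a)$, and exhibiting one further infinite walk starting at $c$ whose label sequence differs from $(a,a,\dots)$ produces, through the limit formula of Characterization \ref{B_r}, a point of $B_c=T\cap(-T)$ distinct from $0$. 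Equivalently, one may write down explicitly the fixed point of a two-letter composition such as $f_1\circ f_3$ and verify that its negative also admits a valid $\mathcal{D}$-address in $T$. Either way the output is a nonzero point of $T\cap(-T)$, and together with its image under the central symmetry this completes the case $B=3$ and hence the proof. The delicate point is precisely the guarantee that such a second walk (or second valid address) exists, which is exactly the content that prevents $T\cap(-T)$ from collapsing to $\{0\}$.
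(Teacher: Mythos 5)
Your symmetry reduction and your handling of $B\ge 4$ are correct and essentially identical to the paper's argument; indeed your pairing $\mathrm{Fix}(f_i)=-\mathrm{Fix}(f_{B+1-i})$ for $2\le i\le B-1$ is the corrected form of the relation the paper writes (with an index slip) as $p+q=B-1$. You also correctly isolate the residual case: $|A|\le B$ together with $2|A|\ge B+3$ forces $B\ge 3$, and $B=3$ forces $A=\pm 3$.

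But for $(A,B)=(\pm 3,3)$ your proposal stops at a description of what a proof would look like, and this is a genuine gap rather than a routine verification. Two things are missing. First, you never exhibit the second infinite walk starting at the state $c$ (equivalently, the second valid $\mathcal{D}$-address); its existence is precisely the substance of the lemma in this degenerate case, as you acknowledge yourself. Second, even granted such a walk, your assertion that a label sequence different from $(a,a,a,\dots)$ yields a point of $B_c$ \emph{distinct from} $0$ is unjustified: distinct digit addresses may encode the same point, so one must actually compute the limit and check that it is nonzero. The paper does exactly this by explicit construction: for $A=3$, $B=3$ it displays the eventually periodic walk
$$c\xrightarrow{a|id}ac\xrightarrow{id|id}a^2bc\xrightarrow{c|a}a^2b\xrightarrow{c|id}a^3bc\xrightarrow{c|c}ac\rightarrow\cdots,$$
whose edges satisfy the defining relations $\delta^{-1}g\gamma g^{-1}\delta'=\gamma'$, and computes the limit point
$$x_0=g^{-1}a\Bigl(\mathrm{Fix}\bigl(g^{-1}\circ g^{-1}c\circ g^{-1}c\circ g^{-1}c\bigr)\Bigr)=\begin{pmatrix}-13/73\\[2pt] 16/219\end{pmatrix}\neq \begin{pmatrix}0\\0\end{pmatrix};$$
for $A=-3$, $B=3$ it uses the walk with digit labels $a,a,a,id,id,\dots$, whose limit is the nonzero point $\begin{pmatrix}0\\1\end{pmatrix}$. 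Until you supply such explicit walks together with the verification that their limits are nonzero, the case $B=3$ --- and hence the lemma --- remains unproved; this is exactly where the paper's proof does its real work.
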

\begin{proof}

By \eqref{Fixpoint1}, we notice that for $2\leq p,q\leq B-2$
$$\text{Fix}(f_p)=-\text{Fix}(f_q) \text{ if } p+q=B-1.$$
This means that Fix$(f_p)$ and Fix$(f_q)$  are both in $T \text{ and }c(T)$.
If $B>3$, these points are different and we are done.
If $B\leq 3$, we only need to consider the case $|A|=3, B=3$ since we assume that $2|A|\geq B+3$. Since $B=3$, by \eqref{Fixpoint1}, Fix$(f_2)=\left(\begin{matrix}
0\\
0\\
\end{matrix}\right)$ 
which is in $T\cap c(T)$. And for the case $A=3, B=3$, there exists an eventually periodic sequence of edges (see Figure \ref{loop_1}).
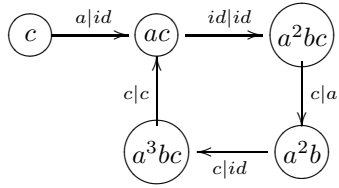
\begin{figure}[h] 
\hskip 0.9cm \xymatrix{
*++[o][F]{c}  
\ar[r]^(0.5){a|id}& 
*++[o][F]{ac}
 \ar@/^0ex/[r]^(0.5){id|id}& 
*++[o][F]{a^2 bc}
 \ar@/^0ex/[d]^(0.5){c|a}& \\
& 
*++[o][F]{a^3bc} 
\ar@/^0ex/[u]^{c|c}&
*++[o][F]{a^2b} 
\ar@/^0ex/[l]^{c|id}&
}
\caption{An eventually periodic sequence of edges for $A=3, B=3.$}\label{loop_1}
\end{figure}
\\
The edges of this figure are defined in the same way as in Definition \ref{graph} and it follows that 
$$x_0=\lim_{n\rightarrow\infty}g^{-1}a\circ (g^{-1}\circ g^{-1}c \circ g^{-1}c\circ g^{-1}c)^n(t)\in T\cap c(T,)$$
(see also Characterization \ref{B_r}). Here, $t\in \mathbb{R}^2$ is arbitrary. Note that
$$x_0=g^{-1}a\Big(\text{Fix}(g^{-1}\circ g^{-1}c \circ g^{-1}c\circ g^{-1}c)\Big),$$
and it is easy to compute that $x_0=\left(\begin{matrix}
-\frac{13}{73}\\
\quad\frac{16}{219}\\
\end{matrix}\right)
 \neq \left(\begin{matrix}
0\\
0\\
\end{matrix}\right)$. 

For the case $A=-3, B=3$, we find the eventually periodic sequence of edges 
\begin{figure}[h] 
\hskip 0.8cm \xymatrix{
*++[o][F]{c}  
\ar[r]^(0.5){a|c}& 
*++[o][F]{a}
   \ar@/^0ex/[r]^(0.45){a|id}& 
*++[o][F]{a^{-1}b}
 \ar@/^0ex/[r]^(0.45){a|c}& 
*++[o][F]{a^{-4}b^2c}
\ar`u[r] `[r]  `[]_(0.65){id|id} []& 
}
\end{figure}\\
So we have $$x'_0=\lim_{n\rightarrow\infty}g^{-1}a\circ g^{-1}a\circ g^{-1}a \circ (g^{-1})^n(t)\in T\cap c(T),$$
and  it is easy to verify that 
$x'_0=\left(\begin{matrix}
0\\
1\\
\end{matrix}\right)\neq \left(\begin{matrix}
0\\
0\\
\end{matrix}\right).$
\end{proof}

\begin{theorem}
Let $A, B \in\mathbb{Z}$ satisfying $|A|\leq B \text{ and } ~2|A|\geq B+3$, and let $T$ be the crystallographic replication tile defined by the data $(g,\mathcal{D})$ given in \eqref{expand-map} and \eqref{digit}. Then $T$ is not disk-like.
\end{theorem}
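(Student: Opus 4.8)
The plan is to argue by contradiction, exploiting the decomposition $T^{\ell}\cong T\cup c(T)$ from Lemma~\ref{twotiles} together with the fact that $T^{\ell}$ is \emph{not} disk-like in this parameter range, which follows from Proposition~\ref{disklattice} since $2|A|\geq B+3$. Suppose, for contradiction, that $T$ is disk-like. Then $c(T)=-T$ is also a topological disk, and since $T$ and $c(T)$ are tiles of the $p2$-tiling their interiors are disjoint; hence $T\cap c(T)\subset \partial T\cap\partial c(T)$. By Lemma~\ref{twotiles} the union $U:=T\cup c(T)$ is a translate of $T^{\ell}$, so $U$ is homeomorphic to $T^{\ell}$ and is therefore not homeomorphic to a closed disk.

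First I would record the elementary topological dichotomy for a union of two closed disks $D_{1},D_{2}$ with disjoint interiors whose intersection lies in $\partial D_{1}\cap\partial D_{2}$ and consists of finitely many arcs and points. Writing $k$ for the number of connected components of $D_{1}\cap D_{2}$, each component being contractible, a Mayer--Vietoris computation gives $\chi(D_{1}\cup D_{2})=2-k$. Consequently $D_{1}\cup D_{2}$ is simply connected if and only if $k=1$; when $k=1$ it is a closed disk precisely when that single component is a nondegenerate arc, and it is a one-point wedge (simply connected but with a cut point, hence not a disk) precisely when that component is a single point. This is exactly where Lemma~\ref{Intersection} enters: it guarantees $\sharp\big(T\cap c(T)\big)\geq 2$, which excludes the single-point wedge and forces any \emph{connected} intersection to be a genuine arc.

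The crux, and the step I expect to be the main obstacle, is to show that $U=T\cup c(T)$ is simply connected, i.e. that $k=1$. Granting this, the single component cannot be a point by Lemma~\ref{Intersection}, so it is an arc, and then $U$ is a closed disk, contradicting that $T^{\ell}$ is not disk-like. To establish simple connectedness I would appeal to the structure of the collinear-digit lattice tiles $T^{\ell}$: for $|A|\leq B$ they are connected, and one knows from Leung and Lau's analysis~\cite{LauLeung07} that the failure of disk-likeness when $2|A|\geq B+3$ is produced by cut points where the boundary curve touches itself, not by enclosed holes; thus $T^{\ell}$ has no bounded complementary component and is simply connected. I would route the argument through the simple connectedness of the union rather than assume the boundary piece $B_{c}=T\cap c(T)$ is connected, since a topological disk may a priori meet a neighbor along several arcs and connectedness of $B_{c}$ is not automatic from disk-likeness alone. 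With $U$ simply connected, the chain ``simply connected and not a disk $\Rightarrow k=1$ with the component a single point $\Rightarrow \sharp(T\cap c(T))=1$'' contradicts Lemma~\ref{Intersection}, completing the proof.
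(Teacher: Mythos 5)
Your overall skeleton (contradiction, Lemma~\ref{twotiles}, Lemma~\ref{Intersection}, non-disk-likeness of $T^{\ell}$ from Proposition~\ref{disklattice}) matches the paper's proof, but the step you yourself flag as the crux --- simple connectedness of $U=T\cup c(T)\cong T^{\ell}$ --- is a genuine gap, not a surmountable obstacle within your framework. Proposition~\ref{disklattice} only records the disk-like/non-disk-like dichotomy; neither it nor Leung and Lau's proof in \cite{LauLeung07} establishes that in the range $2|A|\geq B+3$ the failure of disk-likeness ``is produced by cut points \ldots not by enclosed holes.'' Connectedness of $T^{\ell}$ does follow from the collinear consecutive digit structure, but connected self-affine lattice tiles need not be simply connected, and no result in this paper or in the cited literature gives simple connectedness of $T^{\ell}$ for $2|A|\geq B+3$. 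Worse, by your own Mayer--Vietoris computation, simple connectedness of $U$ is \emph{equivalent} (given that $T$ is a disk) to connectedness of $T\cap c(T)$, which is exactly the point that needs proof; so the argument trades the statement to be proved for an equally unproven, and substantive, structural claim about the non-disk-like lattice tiles.

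The paper closes this gap from the other side: it never uses any topological information about $T^{\ell}$ beyond non-disk-likeness, and instead invokes a structural result about disk-like tiles in locally finite crystallographic tilings, namely \cite[Proposition 4.1 item (2), p.\ 127]{LoridantLuoThuswaldner07}: if a disk-like crystile meets a neighbor in at least two points, then the intersection is a simple arc. Your remark that ``connectedness of $B_c$ is not automatic from disk-likeness alone'' is correct for an abstract pair of topological disks with disjoint interiors, but it \emph{is} automatic for a disk-like tile sitting in a tiling --- that is precisely the content of the cited proposition, and it is the tool your argument is missing. With it, Lemma~\ref{Intersection} immediately makes $T\cap c(T)$ a simple arc, hence $T\cup c(T)$ a closed disk, contradicting Proposition~\ref{disklattice}. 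To repair your proof you would have to either prove simple connectedness of $T^{\ell}$ for $2|A|\geq B+3$ (a result not available to you) or prove the arc property of neighbor intersections of disk-like tiles; the latter is the route the paper takes by citation. A secondary technical point: your Euler-characteristic dichotomy presupposes that $T\cap c(T)$ has finitely many contractible components, which is also not automatic; running Mayer--Vietoris in \v{C}ech cohomology avoids any finiteness assumption, but this does not affect the main gap.
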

\begin{proof}
By a result of \cite{LauLeung07}, we know that if $2|A|\geq B+3$, then $T^{\ell}$ is not disk-like. Suppose that $T$ is disk-like.
By Lemma \ref{Intersection}, we have $\sharp(T\cap c(T))\geq 2$. By \cite[Proposition 4.1 item (2), p. 127]{LoridantLuoThuswaldner07}, this implies that $T\cap c(T)$ is a simple arc. Therefore  $T\cup c(T)$ is disk-like,  as the union of two topological disks whose intersection is a simple arc is again a topological disk. However, by Lemma \ref{twotiles}, $T^{\ell}$ is a translation of $T\cup c(T)$, therefore $T^{\ell}$ must be disk-like. This contradicts the assumption $2|A|\geq B+3$.
\end{proof}

\begin{figure}[h]
\centering
\subfigure[The lattice tile]{\includegraphics[width=2.3 in]{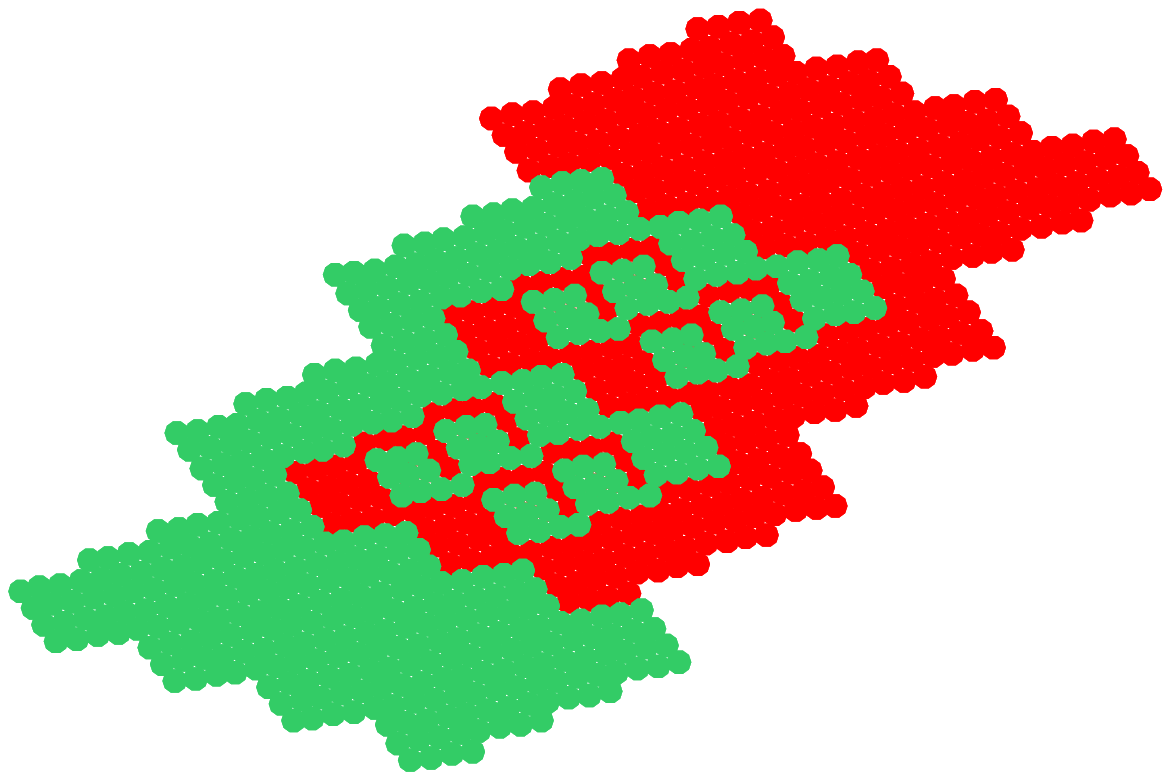}}\label{A1B3_firstsub}
\subfigure[The crystallographic tile]{\includegraphics[width=2.3 in]{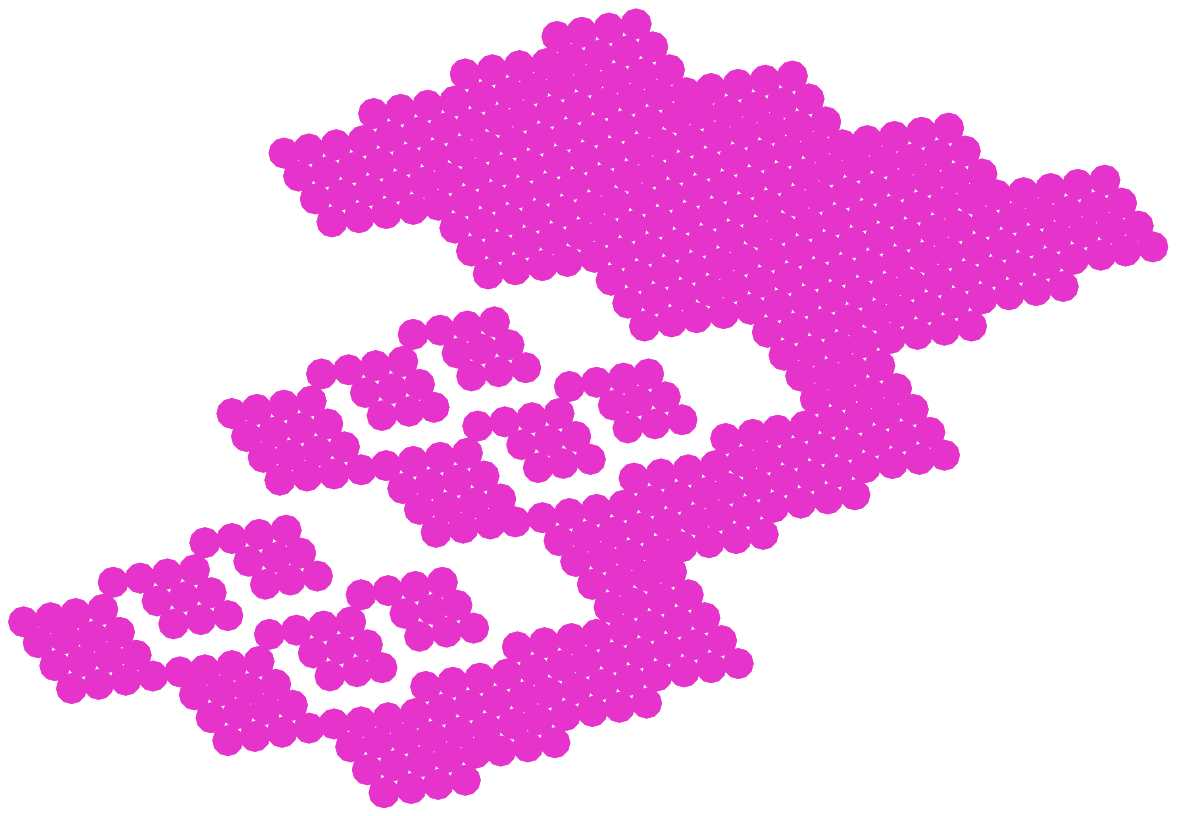}}\label{A1B3_secondsub}
\caption{$A=1, B=4$.}\label{A1B3_subfigures}
\end{figure}
\begin{figure}[h]
\centering
\subfigure[The lattice tile]{\includegraphics[width=2.3 in]{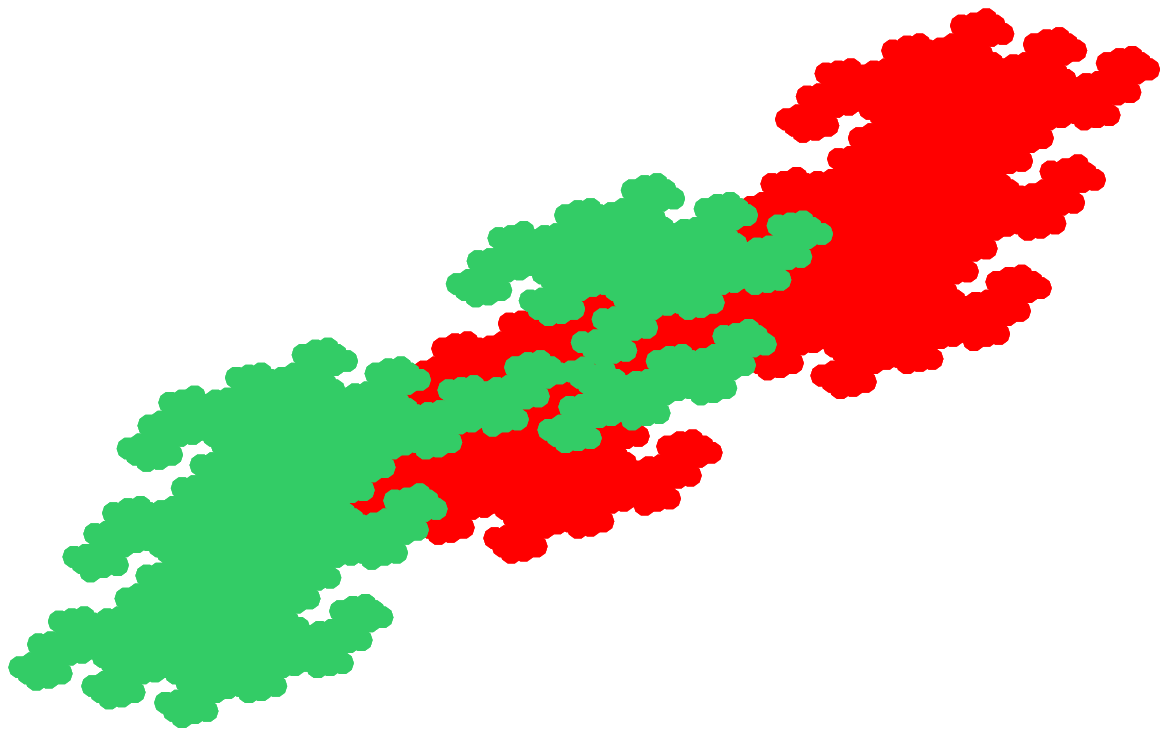}}\label{A2B3_firstsub}
\subfigure[The crystallographic tile]{\includegraphics[width=2.3 in]{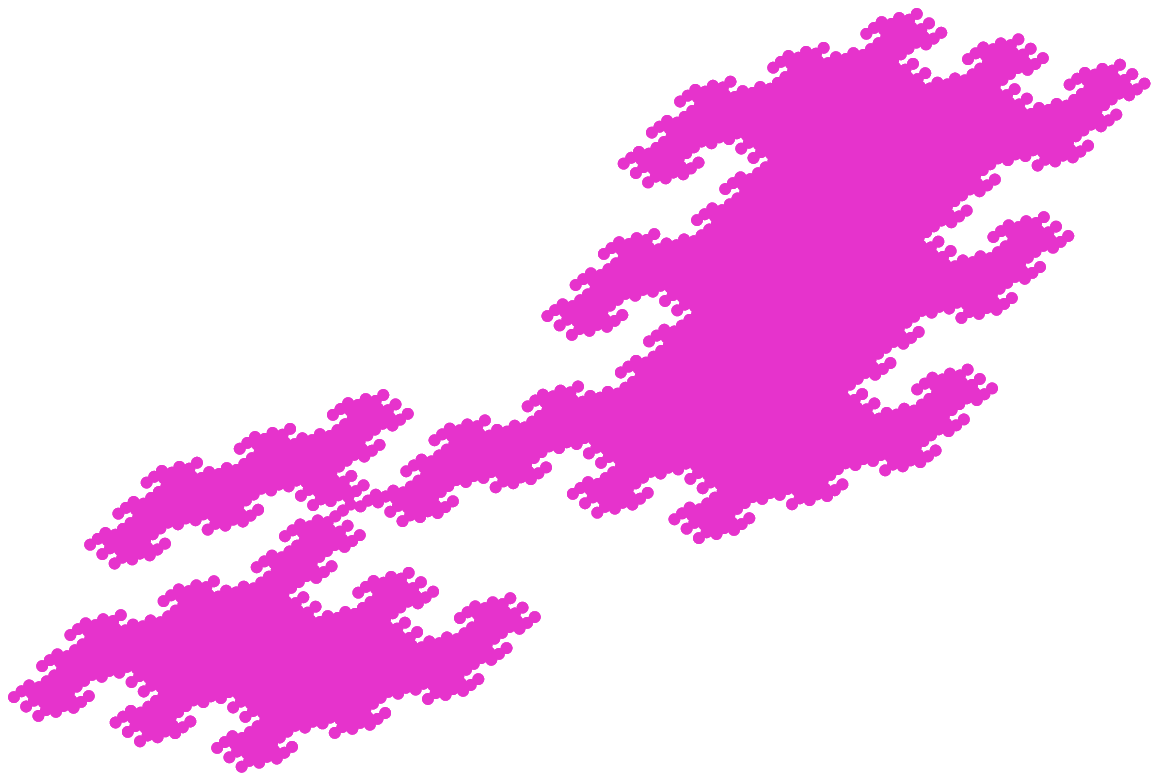}}\label{A2B3_secondsub}
\caption{Lattice tile and Crystile for $A=2, B=3$.}\label{A2B3_subfigures}
\end{figure}
\begin{figure}[h]
\centering
\subfigure[The lattice tile]{\includegraphics[width=2.3 in]{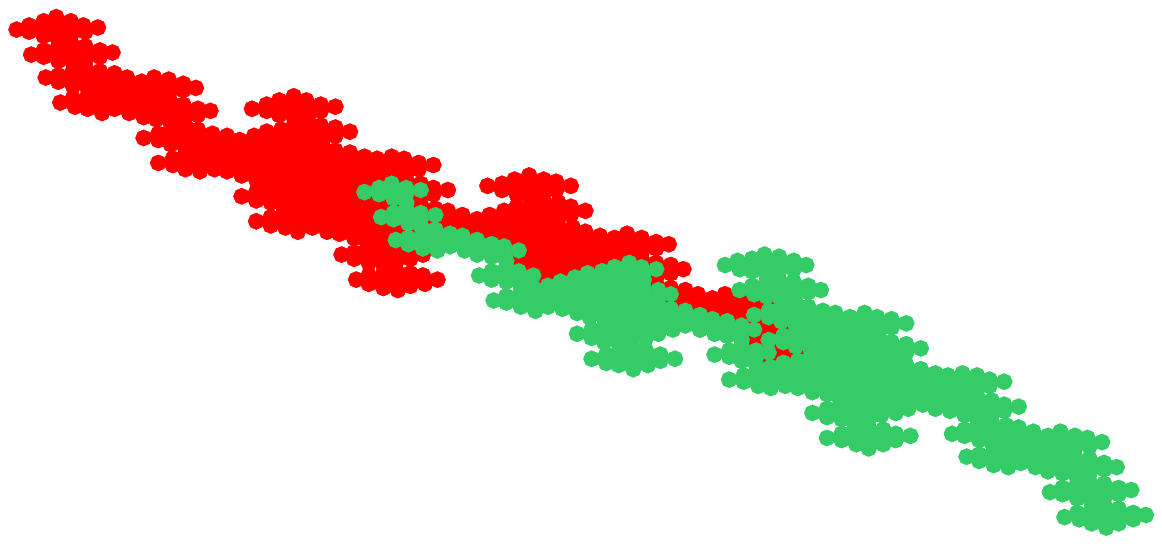}}\label{A3B4_firstsub}
\subfigure[The crystallographic tile]{\includegraphics[width=2.3 in]{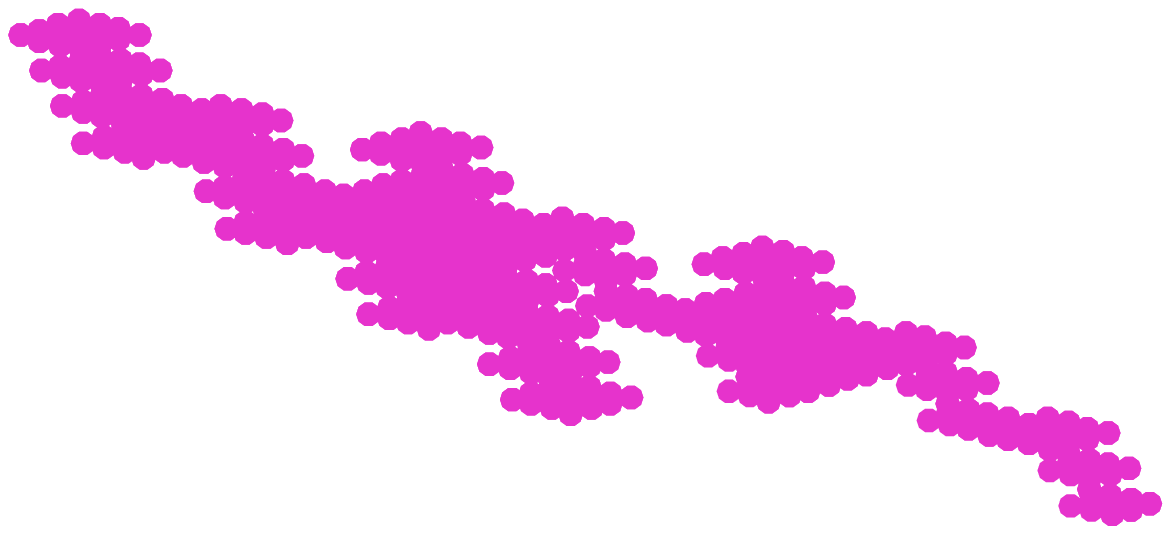}}\label{A3B4_secondsub}
\caption{Lattice tile and Crystile for $A=-3, B=4$.}\label{A3B4_subfigures}
\end{figure}
\begin{figure}[h]
\centering
\subfigure[The lattice tile]{\includegraphics[width=2.3 in]{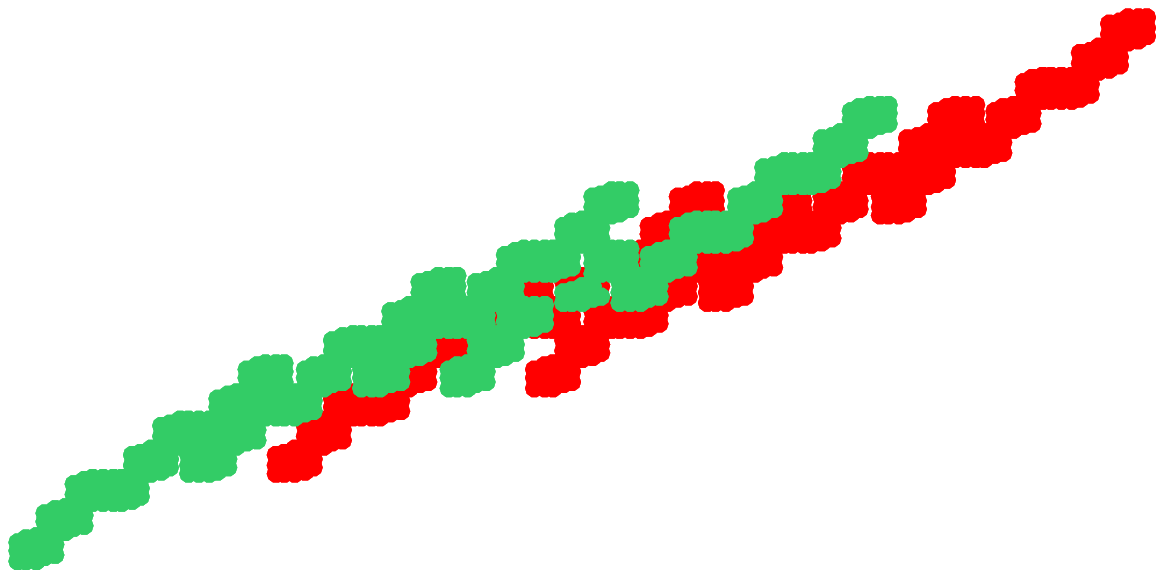}}\label{A3B3_firstsub}
\subfigure[The crystallographic tile]{\includegraphics[width=2.3 in]{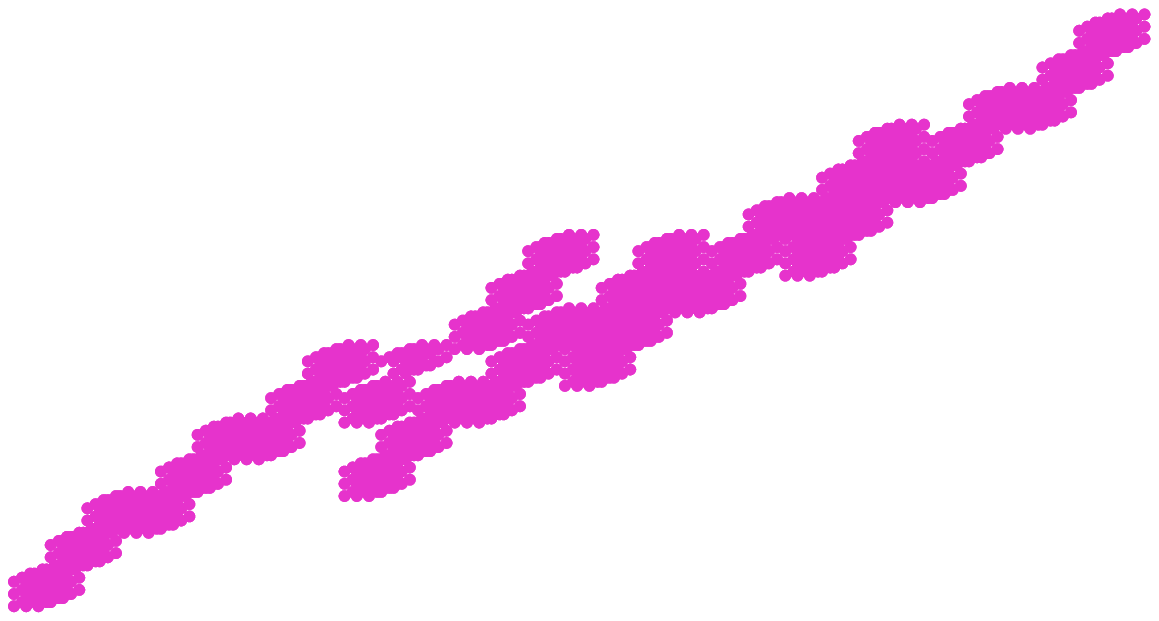}}\label{A3B3_secondsub}
\caption{Lattice tile and Crystile for $A=3, B=3$.}\label{A3B3_subfigures}
\end{figure}
\section{Examples}\label{sec_example}

Now we provide some examples.  For fixed $A \text{ and } B$, even though the lattice tile $T^{\ell}$ is a translate of $T\cup(-T)$, $T$ and $T^{\ell}$ may have completely different topological behaviour. We give the following examples to illustrate this phenomenon. In Figure \ref{A1B3_subfigures}, $A=1, B=4$, $T$ and $T^{\ell}$ are both disk-like. For Figure~ \ref{A2B3_subfigures} and Figure \ref{A3B4_subfigures}, $T^{\ell}$ is disk-like while $T$ is not. In Figure \ref{A3B3_subfigures}, $T$ and $T^{\ell}$ are both not disk-like.






\bigskip

\bigskip

\bigskip

\bigskip

\bibliographystyle{siam}   
\bibliography{biblio}

\def\cprime{$'$}
\begin{thebibliography}{10}

\bibitem{AkiyamaLoridant10}
{\sc S.~Akiyama and B.~Loridant}, {\em Boundary parametrization of planar
  self-affine tiles with collinear digit set}, Sci. China Math., 53 (2010),
  pp.~2173--2194.

\bibitem{AkiyamaLoridant11}
\leavevmode\vrule height 2pt depth -1.6pt width 23pt, {\em Boundary
  parametrization of self-affine tiles}, J. Math. Soc. Japan, 63 (2011),
  pp.~525--579.

\bibitem{AkiyamaThuswaldner05}
{\sc S.~Akiyama and J.~M. Thuswaldner}, {\em The topological structure of
  fractal tilings generated by quadratic number systems}, Comput. Math. Appl.,
  49 (2005), pp.~1439--1485.

\bibitem{BandtWang01}
{\sc C.~Bandt and Y.~Wang}, {\em Disk-like self-affine tiles in
  $\mathbb{R}^2$}, Discrete Comput. Geom., 26 (2001), pp.~591--601.

\bibitem{Burckhardt47}
{\sc J.~J. Burckhardt}, {\em Die {B}ewegungsgruppen der {K}ristallographie},
  vol.~13 of Lehrb\"ucher und Monographien aus dem Gebiete der exakten
  Wissenschaften, Verlag Birkh\"auser, Basel, 1947.

\bibitem{Gelbrich94}
{\sc G.~Gelbrich}, {\em Crystallographic reptiles}, Geom. Dedicata, 51 (1994),
  pp.~235--256.

\bibitem{GruenbaumShephard89}
{\sc B.~Gr{\"u}nbaum and G.~C. Shephard}, {\em Tilings and patterns}, A Series
  of Books in the Mathematical Sciences, W. H. Freeman and Company, New York,
  1989.
\newblock An introduction.

\bibitem{Katai95}
{\sc I.~K{\'a}tai}, {\em Number systems and fractal geometry}, University of
  P\'ecs,  (1995).

\bibitem{KiratLau00}
{\sc I.~Kirat and K.-S. Lau}, {\em On the connectedness of self-affine tiles},
  J. London Math. Soc. (2), 62 (2000), pp.~291--304.

\bibitem{LagariasWang96b}
{\sc J.~C. Lagarias and Y.~Wang}, {\em Integral self-affine tiles in
  $\mathbb{R}^n$. {I}. {S}tandard and nonstandard digit sets}, J. London Math.
  Soc. (2), 54 (1996), pp.~161--179.

\bibitem{LagariasWang96a}
\leavevmode\vrule height 2pt depth -1.6pt width 23pt, {\em Self-affine tiles in
  {${\bf R}^n$}}, Adv. Math., 121 (1996), pp.~21--49.

\bibitem{LauLeung07}
{\sc K.-S. Leung and K.-S. Lau}, {\em Disklikeness of planar self-affine
  tiles}, Trans. Amer. Math. Soc., 359 (2007), pp.~3337--3355.

\bibitem{Loridant12}
{\sc B.~Loridant}, {\em Crystallographic number systems}, Monatsh Math, 167
  (2012), pp.~511--529.

\bibitem{LoridantLuo09}
{\sc B.~Loridant and J.~Luo}, {\em On a theorem of {B}andt and {W}ang and its
  extension to {$p2$}-tiles}, Discrete Comput. Geom., 41 (2009), pp.~616--642.

\bibitem{LoridantLuoThuswaldner07}
{\sc B.~Loridant, J.~Luo, and J.~M. Thuswaldner}, {\em Topology of
  crystallographic tiles}, Geom. Dedicata, 128 (2007), pp.~113--144.

\bibitem{ScheicherThuswaldner02}
{\sc K.~Scheicher and J.~M. Thuswaldner}, {\em Canonical number systems,
  counting automata and fractals}, Math. Proc. Cambridge Philos. Soc., 133
  (2002), pp.~163--182.

\end{thebibliography}

\end{document}